\newcommand{\ud}{\mathrm{d}}
\newcommand\reallywidehat[1]{%
\savestack{\tmpbox}{\stretchto{%
  \scaleto{%
    \scalerel*[\widthof{\ensuremath{#1}}]{\kern-.6pt\bigwedge\kern-.6pt}%
    {\rule[-\textheight/2]{1ex}{\textheight}}
  }{\textheight}%
}{0.5ex}}%
\stackon[1pt]{#1}{\tmpbox}%
}
\definecolor{vert}{HTML}{008000}
\newtheorem{theo}{Theorem}[section]
\newtheorem{prop}[theo]{Proposition}
\newtheorem{lem}[theo]{Lemma}
\newtheorem{coro}[theo]{Corollary}
\newtheorem{conj}[theo]{Conjecture}
\theoremstyle{definition}
\newtheorem{defi}[theo]{Definition}
\newtheorem{example}[theo]{Example}
\newtheorem{ex}[theo]{Example}
\newtheorem{rem}[theo]{Remark}
\newtheorem{remark}[theo]{Remark}
\newtheorem{question}[theo]{Question}
\newcommand{\N}{\mathbb N}
\newcommand{\Z}{\mathbb Z}
\newcommand{\Q}{\mathbb{Q}}
\newcommand{\Qbar}{\overline{\mathbb{Q}}}
\newcommand{\R}{\mathbb R}
\newcommand{\C}{\mathbb C}
\newcommand{\F}{\mathbb F}
\newcommand{\Fp}{\F_p}
\renewcommand{\P}{\mathbb P}
\newcommand{\calA}{\mathcal A}
\newcommand{\calM}{\mathcal M}
\newcommand{\id}{\text{\rm id}}
\newcommand{\op}[1]{\mathscr{#1}}
\renewcommand{\mod}{\,\text{\rm mod}\,}
\renewcommand{\hyp}[1]{\mathscr{H}\!\left(#1\right)}
\newcommand{\DP}{\text{\rm dp}}
\newcommand{\softO}{O^{\sim}}
\renewcommand{\hyp}[1]{\mathscr{Hyp}(#1)}
\newcommand{\Diag}{\operatorname{Diag}}
\newcommand{\ord}{\text{\rm ord}}
\newcommand{\univ}[1]{\mathscr{U}}
\def\fS{\mathscr{S}}
\def\pFqnoargs#1#2{{}_#1F_#2}
\def\pFq#1#2#3#4#5{
  \mathchoice
      {\pFqnoargs{#1}{#2}\biggl(\begin{matrix}{\def,{\kern.707em}#3}\\{\def,{\kern.707em}#4}\end{matrix}\,\bigg|\,#5\biggr)} 
      {\pFqnoargs{#1}{#2}(#3;#4;#5)} 
      {\pFqnoargs{#1}{#2}(#3;#4;#5)} 
      {\pFqnoargs{#1}{#2}(#3;#4;#5)} 
}
\def\twoFone#1#2#3#4{\pFq21{#1,#2}{#3}{#4}}
\def\twoFone#1#2#3#4{{_2F_1}\biggl(\begin{matrix}
  {#1}\kern.707em {#2}\\{#3}
\end{matrix}\,\bigg|\,#4\biggr)}
\newcommand{\footremember}[2]{%
    \footnote{#2}
    \newcounter{#1}
    \setcounter{#1}{\value{footnote}}%
}
\title{Algebraic solutions of linear differential equations:\\an arithmetic approach}
\author{%
  \href{https://mathexp.eu/bostan/}{Alin Bostan}\footremember{1}{Inria, Université Paris-Saclay, 1 rue Honoré d'Estienne d'Orves, 91120 Palaiseau, France.}%
  \and \href{https://xavier.caruso.ovh}{Xavier Caruso}\footremember{trailer}{CNRS; Université de Bordeaux, IMB; Inria Bordeaux Sud-Ouest, CANARI, 351 cours de la Libération, 33405 Talence, France.}%
  \and \href{http://math.univ-lyon1.fr/~roques/}{Julien Roques}\footremember{alley}{Université de Lyon,
Universite Claude Bernard Lyon 1, CNRS UMR 5208,
Institut Camille Jordan,
43 Bd du 11 Novembre 1918,
69622 Villeurbanne, France.}%
  }
\date\today
\begin{document}

\maketitle

\begin{abstract}
Given a linear differential equation with coefficients in $\Q(x)$, 
an important question is to know whether its full space of solutions consists 
of algebraic functions, or at least if one of its specific solutions is 
algebraic.
After presenting motivating examples coming from various branches of mathematics,
we advertise in an elementary way a beautiful local-global arithmetic approach to these questions, initiated by Grothendieck in the late sixties.
This approach has deep ramifications and leads to the still unsolved Grothendieck-Katz $p$-curvature conjecture.
\end{abstract}

\begingroup
\renewcommand{\thefootnote}{}
\footnotetext{%
\\
\emph{2020 Mathematics Subject Classification:}.
Primary:
11-02; 
Secondary:
12H05, 
33C20. 
12H25, 
34A20, 
34A30, 
34M15, 
05A15, 
68W30. 
}%
\endgroup

\setcounter{tocdepth}{2}
\tableofcontents

\section{Context, motivation and basic examples} \label{sec:intro}
In this text we consider linear differential equations (LDEs) of order $r$
\begin{equation}
\label{eq:lineardiff}
a_r(x) y^{(r)}(x) + a_{r-1}(x) y^{(r-1)}(x) + \cdots + a_1(x) y'(x) + a_0(x) y(x) = 0,
\end{equation}
where the $a_i$'s are known rational functions in $\Q(x)$, with $a_r$ not
identically zero and $y(x)$ is an unknown ``function''. 
In many applications, the desired solution $y(x)$ is a formal power series with coefficients in $\Q$. Therefore, in what follows, when we write ``function'' we actually mean an element of $\Q[[x]]$ unless otherwise specified.
We will say that a function $y\in\Q[[x]]$ is \emph{differentially finite} (in short, \emph{D-finite}) if it satisfies a linear differential equation like~\eqref{eq:lineardiff}.

A function $y\in\Q[[x]]$ is called \emph{algebraic} if it is algebraic over $\Q(x)$, that is, if $y(x)$ satisfies a polynomial equation of the form $P(x,y(x))=0$, for some $P\in\Q[x,y]\setminus \{ 0 \}.$ 
Otherwise, $y(x)$ is called \emph{transcendental}.
The simplest algebraic functions are polynomials in~$\Q[x]$, closely followed by rational power series: these are rational functions in $\Q(x)$ that have no pole at $x=0$ and therefore admit a Taylor expansion around the origin. 
A little more general are $N$-th roots of rational power series, such as $y(x)=1/\sqrt[N]{1-x}$.
In all these three cases, $y(x)$ is clearly D-finite and satisfies a linear differential equation of order $r=1$.

Many other examples of interesting functions that might or might not 
be solutions of linear differential equations arise from combinatorics.
A basic example is given by the Catalan numbers.

\begin{example}[Catalan numbers]\label{ex:Catalan}
By definition, a \emph{Dyck path} is a path drawn in the quarter
plane $\N^2$ that starts at $(0,0)$, consists of steps $\nearrow$ 
(directed by the vector $(1,1)$) or $\searrow$ (directed by the
vector $(1, -1)$) and finally ends on the $x$-axis
(see Figure~\ref{fig:Dyck}).

Let $C_n$ be the number of Dyck paths ending at $(2n,0)$; we say
that such paths have length~$n$.
For instance $C_1=1$ since there is a single Dyck path ending at $(2,0)$, namely
$\nearrow$--$\searrow$, while $C_2=2$ since there are two Dyck paths ending at $(4,0)$, namely
$\nearrow$--$\nearrow$--$\searrow$--$\searrow$ and $\nearrow$--$\searrow$--$\nearrow$--$\searrow$.
We use the convention that $C_0=1$.
We notice that any Dyck path of length $n+1$ can be written uniquely 
as the concatenation of (1)~a step $\nearrow$, (2)~a Dyck path of 
length $k{-}1$ (translated by $(1, 1)$), 
(3)~a step $\searrow$ and (4)~a Dyck path of length
$n{-}k$. It follows that the sequence $(C_n)_{n \geq 0}$ satisfies 
the following nonlinear recurrence relation:
\[C_n = \sum_{k=1}^{n} C_{k-1} C_{n-k},
\qquad \text{for all } n \geq 1.\]
If $y(x)$ denotes the generating function of the $C_n$'s, \emph{i.e.}
$y(x) = \sum_{n=0}^\infty C_n x^n$, the previous relation translates
to the algebraic identity 
\begin{equation}
\label{eq:algCatalan}
y(x) = 1 + x{\cdot}y(x)^2
\end{equation}
(the summand $1$ comes from the fact that $C_0 = 1$).
Therefore $y(x)$ is algebraic and one can even solve equation~\eqref{eq:algCatalan} and
get the closed formula $y(x) = \frac{1-\sqrt{1 - 4x}}{2x}$.
It is worth noting that, starting from the algebraic relation 
\eqref{eq:algCatalan}, one can also derive a linear differential
equation satisfied by $y(x)$.
Indeed differentiating~\eqref{eq:algCatalan}, one gets
$y'(x) = y(x)^2 + 2 x\: y(x)\:y'(x)$. Therefore:
\[y'(x) = \frac{y(x)^2}{1 - 2x\:y(x)}.\]
The right hand side in the latter expression can be further simplified
using again equation~\eqref{eq:algCatalan}. Indeed notice that
$$\big(1 - 2x\:y(x)\big)^2 = 1 - 4x \:y(x) + 4x^2 \: y(x)^2 = 1 - 4x$$
and consequently
\[\frac{y(x)^2}{1 - 2x\:y(x)} =
 \frac{y(x)^2 \cdot \big(1 - 2x\: y(x)\big)}{1 - 4x} =
 \frac{\big(y(x) - 1\big) \big(1 - 2x\: y(x)\big)}{x(1 - 4x)} =
 \frac{2x\:y(x) - y(x) + 1}{x(1 - 4x)}\]
after replacing two times $y(x)^2$ by $\frac{y(x) - 1}x$.

Finally, one obtains the inhomogeneous differential equation
\[(4x^2-x)y'(x) + (2x-1)y(x) + 1 = 0.\]
From this, we can derive new interesting information about the 
sequence $(C_n)_{n \geq 0}$. For instance, it easily implies the
simpler recurrence relation $C_n = \frac{4n-2}{n+1} \cdot C_{n-1}$ 
for all $n \geq 1$, from which we further derive the closed formula 
$C_n = \frac 1{n+1} \binom{2n}n$. Using Stirling's formula, we also 
deduce the asymptotic estimate $C_n \sim {4^n}/{\sqrt{\pi n^3}}$.
\end{example}

\begin{figure}
\hfill%
\begin{tikzpicture}[scale=0.8]
\begin{scope}[thin,black!20]
\draw (0,1)--(10.5,1);
\draw (0,2)--(10.5,2);
\draw (0,3)--(10.5,3);
\draw (1,0)--(1,3.5);
\draw (2,0)--(2,3.5);
\draw (3,0)--(3,3.5);
\draw (4,0)--(4,3.5);
\draw (5,0)--(5,3.5);
\draw (6,0)--(6,3.5);
\draw (7,0)--(7,3.5);
\draw (8,0)--(8,3.5);
\draw (9,0)--(9,3.5);
\draw (10,0)--(10,3.5);
\end{scope}
\draw[-latex] (0,0)--(10.5,0);
\draw[-latex] (0,0)--(0,3.5);
\draw[very thick] (0,0)--(2,2)--(3,1)--(5,3)--(8,0)--(9,1)--(10,0);
\fill (0,0) circle (0.7mm);
\fill (10,0) circle (0.7mm);
\node[scale=0.8, below left] at (0,0) { $0$ };
\node[scale=0.8, below] at (8,0) { $2k$ };
\node[scale=0.8, below] at (10,0) { \vphantom{$k$}$2n$ };
\end{tikzpicture}%
\hfill\null
\caption{A Dyck path}
\label{fig:Dyck}
\end{figure}

The previous example shows that being able to write down an equation
(either algebraic or differential) for a generating series can help a
lot in studying its coefficients.
Of course, obtaining explicit closed formulas (as we did for
the Catalan numbers) will not be possible in general; however, meaningful
information (such as the asymptotic growth of the coefficients) can be 
often extracted from the equation.
Besides, in many cases it turns out that the algebraicity of a 
generating series is the mirror of a (sometimes hidden) ``algebraic''
structure on the combinatorial side which often takes the form of
a recursive tree structure: in Example~\ref{ex:Catalan}, for 
instance, a Dyck path can be decomposed as a concatenation of smaller 
Dyck paths which themselves can be decomposed similarly, \emph{etc.}
We refer to~\cite{Bousquet06} for a much more detailed discussion
on this topic (including much more examples).

In Example~\ref{ex:Catalan}, we transformed an algebraic equation
into a differential equation. It is actually a general fact and an
old result, already known by Abel, that \emph{any algebraic function
is D-finite}. Precisely, if $y(x)$ satisfies an algebraic equation $P(x,y(x))=0$ with $P$ of
degree~$n$ in~$y$, then $y(x)$ also satisfies a differential equation like~\eqref{eq:lineardiff}
of order $r$ bounded from above by~$n$. This follows easily from the following reasoning. 
By differentiating $P(x,y(x))=0$ with respect to~$x$ and by using the chain rule, we obtain the equality
\[P_x(x,y(x)) + y'(x) P_y(x,y(x))=0.\]
Here and in what follows we denote by $P_x$ the derivative
$\partial P / \partial x$ of $P$ with respect to~$x$. Therefore, if $P$ is assumed to be a
polynomial of minimal degree in~$y$ satisfied by $y(x)$, then $P_y(x,y(x))$ is a nonzero
function in $\Q[[x]]$, and hence $y'(x) = -P_x(x,y(x))/P_y(x,y(x))$ is a rational function in
$y(x)$. By using again the equation $P(x,y(x))=0$, it is easy to see that any
rational function in~$y(x)$ can be re-written as a polynomial of degree at most~$n-1$ in~$y(x)$.
In other terms, the derivative $y'(x)$ lives in the $\Q(x)$-vector space generated by $1,y(x),
\ldots, y(x)^{n-1}$. The same similarly holds for all derivatives $y(x), y'(x), y''(x), \ldots,
y^{(n)}(x)$, and hence these elements must satisfy a nontrivial linear relation over~$\Q(x)$;
any such relation yields a linear differential equation~\eqref{eq:lineardiff} of order at most~$n$.
Observe that the same reasoning also proves the existence of an inhomogeneous linear
differential equation of order at most~$n-1$ for $y(x)$.

A naive though very natural question is whether the converse of Abel's result holds: \emph{is every D-finite function algebraic?}
The answer is negative, already for differential equations of order $r=1$, as the following example shows.

\begin{example}\label{ex:exp}
The function $\exp(x) \coloneqq  \sum_{n\geq 0} x^n/n!$, solution of $y'=y$, is transcendental. Here is
a purely algebraic proof.
Let us assume by contradiction that $\exp(x)$ satisfies a polynomial equation, of minimal 
degree~$d\geq 1$,
of the form $e^{dx} + \sum_{k=0}^{d-1} r_k(x) e^{kx} =0$ for some rational functions
$r_k(x) \in\Q(x)$.
By differentiating this equality with respect to~$x$ and using
$\exp'(x)=\exp(x)$, we get a new degree-$d$ equation $d e^{dx} + \sum_{k=0}^{d-1} (r_k'(x) + k
r_k(x))e^{kx}=0$ which by minimality is equal the former up to a factor~$d$. In other words,
$r_k'(x) + k r_k(x) = d r_k(x)$ for all~$k<d$. In particular, $r_0'(x)=d r_0(x)$, which implies that 
$r_0=0$. 
(Indeed, if $r_0(x)=A/B$ for two coprime polynomials $A,B\in\Q[x]$ with
$A'B-AB'=dAB$, then $B$ divides $AB'$, hence $B$ divides $B'$ and $B' = 0$.
Thus $A'=dA$, which implies $A=0$ and $r_0(x)=0$.)
The nullity of $r_0$ now implies that $\exp(x)$ satisfies a polynomial equation of degree $d-1$, which contradicts the minimality of~$d$.
\end{example}	

The reader could object that in \cref{ex:exp} we were probably lucky, because the differential equation of $\exp(x)$ is so simple, being of order 1 with constant coefficients.
Indeed, in the particular case of the exponential function, there are many other \emph{ad-hoc}
transcendence proofs, based on various branches of mathematics. For instance, a direct analytic
argument is that, viewed as a complex analytic function, any non-polynomial algebraic function needs to have a
finite (and positive) radius of convergence, while $\exp(x)$ is entire (that is, analytic in the whole complex plane).
Another proof is that $\exp(x)$ cannot satisfy a nontrivial algebraic equation, since otherwise
by specializing that equation at $x=1$ we would obtain that the number $e=\exp(1)$ is an
algebraic number, a statement known be to false since Hermite (1873). One could qualify this
last proof as ``cheating'', since it is intuitively clear that proving transcendence of functions
should be easier than proving transcendence of numbers.

\smallskip 

A systematic and very useful analytic way to establish functional 
transcendence is \emph{Flajolet's criterion}~\cite[Criterion~D]{Flajolet87} 
(see also~\cite[Thm.~VII.8]{FlSe09} and \cite[\S2.3]{Melczer21}). It is 
a consequence of the classical Newton-Puiseux theorem on fractional 
(Puiseux) series expansion of algebraic functions and on Darboux's 
transfer results from the local behavior of $f(x)=\sum_{n \geq 0} a_n 
x^n$ around its singularities to the asymptotic behavior of its 
coefficient sequence $(a_n)_{n \geq 0}$.
Before stating it, we recall the definition of the gamma function
\[\Gamma(s) \coloneqq \int_0^{+\infty} t^{s-1} e^t dt,\]
where the variable $s$ is a complex number with positive real part.
The gamma function interpolates the factorial in the sense that
$\Gamma(n) = (n{-}1)!$ for any positive integer $n$.

\begin{prop}[{Theorems A and D} in \cite{Flajolet87}] \label{TheoremD}
Let $f(x)=\sum_{n \geq 0} a_n x^n \in \Q[[x]] \setminus \Q[x]$ be an algebraic non-polynomial function.
Then $f(x)$ has a finite number of singularities, a finite nonzero radius of convergence,
and
its coefficient sequence $(a_n)_{n\geq 0}$
is such that
\begin{equation} \label{eq:asy}
a_n = \frac{\beta^n n^r}{\Gamma(r+1)} \sum_{i=0}^m C_i \omega_i^n + O(\beta^n n^q),
\end{equation}
where $m\in \Z_{\geq 0}$, $r\in\mathbb{Q}\setminus\Z_{<0}$, $q<r$, 
$\beta \in \overline{\Q}_{>0}$, and $C_i, \omega_i \in \overline{\Q} 
\setminus \{ 0 \}$ with $|\omega_i|=1$.
\end{prop}

The most useful form of the criterion is the following:
\begin{coro}[``Flajolet's criterion''] \label{crit:Flajolet}
If $f(x)=\sum_{n \geq 0} a_n x^n \in \Q[[x]]$ 
and
$a_n \sim \gamma \,  \beta^n \, n^r$ with
either 
$r\not\in\mathbb{Q}\setminus\Z_{<0}$,
or
$\beta \notin \overline{\Q}_{>0}$,
or
$\gamma \cdot \Gamma(r+1) \notin \overline{\Q}$,
then $f(x)$ is transcendental.
\end{coro}

As an example of application, \cref{TheoremD} immediately implies that 
$\exp(x)=\sum_{n\geq 0} x^n/n!$ is transcendental, since the sequence 
$1/n!$ is not of the form~\eqref{eq:asy} (or, since the radius of 
convergence of $\exp(x)$ is infinite).

\medskip At this point, we can ask ourselves: \emph{is there a 
purely arithmetic proof of the transcendence of $\exp(x)$?}
This question can be seen as the starting point of the present article, whose
main aim is precisely to advertise a very beautiful number-theoretic approach to algebraicity of solutions of linear differential equations.
More generally, we can raise the following question.

\begin{question}
Is there any number-theoretic way to recognize whether the differential equation~\eqref{eq:lineardiff} admits only algebraic solutions in its solution space?
\end{question}

Nicely enough, the answer to this question is positive, for two distinct but related reasons.
Let us first explain them a bit in the case of the exponential function $\exp(x) = \sum_{k \geq
0} x^k/k!$. 
The first arithmetic proof of the transcendence of $\exp(x)$ is based on 
the following result, which in rough terms asserts that the coefficients of 
algebraic functions are ``almost integral''.

\begin{prop}[``Eisenstein's criterion'' (1852)]\label{prop:eisenstein}
If the function $y(x) = \sum_{k \geq 0} a_k x^k \in \Q[[x]]$ is algebraic, 
then there exists $N\in\N\setminus \{ 0 \}$ such that $y(Nx) - y(0) \in \Z[[x]]$.
In particular, only a finite number of prime numbers can divide the denominators of the coefficients $a_k$.
\end{prop}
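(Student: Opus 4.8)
The plan is to exploit the algebraic equation $P(x,y(x))=0$ together with its partial derivative in~$y$, following exactly the mechanism recalled just before \cref{ex:Catalan}. Write $P(x,y)=\sum_{i=0}^{n} p_i(x) y^i$ with $p_i\in\Q[x]$, chosen of minimal degree~$n$ in~$y$, so that $Q(x):=P_y(x,y(x))\in\Q[[x]]$ is nonzero. After clearing denominators we may assume $p_i\in\Z[x]$, and after the substitution $x\mapsto Nx$ for a suitable $N$ we may further normalise so that $p_n(0)$ is a unit we can invert — more precisely, the first reduction I would make is to arrange, by an initial rescaling $x\mapsto N_0 x$ and division by a constant, that $P(0,y(0))=0$ with $P_y(0,y(0))\neq 0$ and that all $p_i$ have integer coefficients; dividing the whole equation by $P_y(0,y(0))$ lets me assume $Q(0)=1$.

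The heart of the argument is a recursive computation of the coefficients $a_k$. I would set $u(x)=y(Nx)-y(0)$ and substitute into the (rescaled) equation $P(x,u(x)+y(0))=0$; expanding, the coefficient of $x^k$ gives a relation that determines $a_k$ (up to the factor $N^k$) in terms of $a_0,\dots,a_{k-1}$, the coefficients of the $p_i$, and — crucially — the coefficients of $1/Q(x)$. Since $Q(0)=1$ and $Q\in\Z[[x]]$ after the normalisation, the formal inverse $1/Q(x)$ lies in $\Z[[x]]$, so no new denominators are introduced from that source. Each step of the recursion involves only additions and multiplications of quantities already in $\Z$, except that solving for $a_k$ forces a division by an integer coming from the leading behaviour; choosing $N$ to be a high enough power of that integer (equivalently, tracking that only finitely many primes appear in these denominators and taking $N$ divisible by all of them to a sufficient power) clears it. A cleaner packaging of the same idea: by the analytic implicit function theorem over $\Q_p$, for all but finitely many primes $p$ the series $y(x)$ already has $p$-adically integral coefficients because the relevant Newton iteration converges in $\Z_p[[x]]$; letting $N$ be the product of the finitely many bad primes (with multiplicities) gives $y(Nx)-y(0)\in\Z[[x]]$.

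The main obstacle — really the only delicate point — is making the bookkeeping of denominators precise: one must check that the division performed at stage~$k$ of the recursion only ever involves a fixed finite set of primes, independent of~$k$, rather than accumulating a new prime at each step. This is where minimality of~$n$ is used (it guarantees $Q(0)\neq 0$, so after normalisation the obstruction to inversion is uniform), and it is also where the $p$-adic reformulation pays off, since $p$-adic convergence of the Newton iteration is a single clean condition (essentially $|Q(0)|_p=1$ and $|p_i|_p\le 1$) that visibly fails for only finitely many~$p$. The final sentence of the statement is then immediate: if $y(Nx)-y(0)\in\Z[[x]]$ then $a_k N^k\in\Z$ for all~$k$, so the denominator of~$a_k$ divides~$N^k$ and hence is supported on the (finite) set of prime divisors of~$N$.
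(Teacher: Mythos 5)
The paper states Eisenstein's criterion without proof, so there is no internal argument to compare yours against; judged on its own, your strategy (recursive determination of the $a_k$ from the coefficient of $x^k$ in $P(x,y(x))=0$, or equivalently a $p$-adic Hensel--Newton iteration) is the standard and correct one \emph{in the unramified case}. The genuine gap is your claim that one can arrange $P_y(0,y(0))\neq 0$, and in particular the parenthetical assertion that minimality of $n$ ``guarantees $Q(0)\neq 0$''. Minimality only guarantees that $Q(x)=P_y(x,y(x))$ is a \emph{nonzero power series}; it may perfectly well vanish at $x=0$. For instance $y(x)=x\sqrt{1-x}\in\Q[[x]]$ has minimal polynomial $P(x,y)=y^2-x^2(1-x)$, and $Q(x)=2x\sqrt{1-x}$ vanishes at the origin. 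Moreover the rescaling $x\mapsto N_0x$ you invoke cannot repair this: it changes nothing about the value at $x=0$. Consequently both versions of your argument break down in this case: the coefficient of $x^k$ in $P(x,\sum_j a_jx^j)$ equals $P_y(0,a_0)\,a_k+F_k(a_0,\dots,a_{k-1})$, so when $P_y(0,a_0)=0$ the recursion does not determine $a_k$ at all; and the $p$-adic Newton iteration fails to converge for \emph{every} prime, not just finitely many, since the condition $|Q(0)|_p=1$ is violated identically.

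The conclusion of the proposition is of course still true in the ramified case, but an extra reduction is required. The standard fix is to let $e=\operatorname{ord}_{x=0}Q(x)$ and replace $y$ by a sheared tail such as $z(x)=x^{-e}\bigl(y(x)-a_0-a_1x-\cdots-a_{2e}x^{2e}\bigr)$: one checks that $z$ satisfies a polynomial equation $\widetilde P(x,z)=0$ with $\widetilde P_z(0,z(0))\neq 0$, to which your Hensel/recursion argument then applies verbatim, and integrality statements for $z$ transfer back to $y$ after absorbing the finitely many denominators of $a_0,\dots,a_{2e}$ into $N$. With that reduction supplied, the rest of your write-up (including the final deduction that the denominator of $a_k$ divides $N^k$ for $k\geq 1$) is sound.
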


Since in the factorial sequence $(k!)_{k \geq 0}$ obviously all prime numbers
appear as divisors, \cref{prop:eisenstein} immediately implies that
$\exp(x)$ is transcendental.

To formulate the second arithmetic proof of the transcendence of $\exp(x)$, we will need a little bit of additional vocabulary. 
The differential equation~\eqref{eq:lineardiff} can be rewritten
in the compact form $\op L(y) = 0$, where $\op L$ is the linear differential operator
\begin{equation}
\label{eq:opL0}
\op L = 
a_r(x) {\cdot} \partial_x ^r+ a_{r-1}(x) {\cdot}
\partial_x^{r-1} + \cdots + a_1(x) {\cdot}\partial_x + a_0(x).
\end{equation}
We denote  by 
$\Q(x)\langle\partial_x\rangle$ the set of such linear differential operators. For convenience, we also allow the trivial operator, in which all coefficients $a_i(x)$ are zero.
The elements of $\Q(x)\langle\partial_x\rangle$ act on functions in $x$ by letting the 
variable $\partial_x$ act through the differentiation $\frac d {dx}$. 
The set $\Q(x)\langle\partial_x\rangle$ is then endowed with a 
structure  of \emph{noncommutative} ring where the addition is the usual
one but the multiplication is twisted according to the following
rule, reminiscent from Leibniz's differentiation rule:
\[ \forall r \in \Q(x), \quad \partial_x r(x) =  r(x) \partial_x + r'(x).\]
Although the ring $\Q(x)\langle\partial_x\rangle$ is noncommutative, it shares many
properties with the classical commutative ring of polynomials $\mathbb{Q}(x)[y]$. First,
one has a well-defined notion of degree: 
the \emph{degree} of the nonzero operator $\op L$ in~\eqref{eq:opL0}
is the order $r$ of the corresponding differential equation~\eqref{eq:lineardiff},
that is the largest integer~$r$ such that $a_r(x) \neq 0$. We will denote it by
$\ord(\op L)$ in what follows.
Second, the ring $\Q(x)\langle \partial_x \rangle$ admits an Euclidean division.
\begin{prop}
The ring $\Q(x)\langle \partial_x \rangle$ is right Euclidean,
\emph{i.e.}, for all $A, B \in \Q(x)\langle \partial_x \rangle$ with
$B \neq 0$, there exist $Q$ and $R$ in $\Q(x)\langle \partial_x \rangle$
such that $A = QB + R$ 
and $\ord R < \ord B$.
Moreover, the pair $(Q,R)$ is unique with these properties.
\end{prop}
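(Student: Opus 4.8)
The plan is to mimic the familiar division algorithm for ordinary polynomials, carrying out the reduction of the degree one step at a time and controlling what the noncommutativity does to the leading term. First I would dispose of the trivial case: if $\deg A < \deg B$, take $Q = 0$ and $R = A$. Otherwise, write $A = a_n \partial_x^n + (\text{lower order})$ and $B = b_m \partial_x^m + (\text{lower order})$ with $n \geq m$ and $a_n, b_m \in \Q(x)$ nonzero. The key observation is that, expanding $B \cdot \frac{a_n}{b_m}\,\partial_x^{n-m}$ using the commutation rule $\partial_x r = r\partial_x + r'$ repeatedly, one gets $b_m \partial_x^m \cdot \frac{a_n}{b_m}\,\partial_x^{n-m} = a_n \partial_x^n + (\text{order} < n)$, because each time $\partial_x$ is moved past the scalar $\frac{a_n}{b_m}$ the ``error'' term $\big(\tfrac{a_n}{b_m}\big)'$ has one fewer $\partial_x$. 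Hence $A - B \cdot \frac{a_n}{b_m}\,\partial_x^{n-m}$ has degree strictly less than $n$, and I would then induct on $\deg A$ (or run a finite descent) to finish, collecting the quotient as a sum of the monomials $\frac{a_n}{b_m}\partial_x^{n-m}$ produced at each stage.

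For uniqueness, suppose $A = BQ_1 + R_1 = BQ_2 + R_2$ with $\deg R_i < \deg B$. Then $B(Q_1 - Q_2) = R_2 - R_1$. Here I would invoke the fact that $\Q(x)\langle\partial_x\rangle$ has no zero divisors and that degrees add under multiplication: for nonzero $U, V$ one has $\deg(UV) = \deg U + \deg V$, since the leading coefficient of $UV$ is the product of the leading coefficients (the commutation rule only ever contributes lower-order terms, so it cannot kill the top coefficient). Therefore if $Q_1 \neq Q_2$ we would get $\deg\big(B(Q_1-Q_2)\big) \geq \deg B > \deg(R_2 - R_1)$, a contradiction; so $Q_1 = Q_2$ and then $R_1 = R_2$.

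The only slightly delicate point — and the one I would want to state cleanly as a preliminary lemma — is precisely the multiplicativity of the leading term: for $U = u\,\partial_x^p + \cdots$ and $V = v\,\partial_x^q + \cdots$ in $\Q(x)\langle\partial_x\rangle$ with $u, v \neq 0$, the product $UV$ equals $uv\,\partial_x^{p+q} + (\text{terms of order} < p+q)$. This is proved by a short computation: it suffices to check that $\partial_x^p \cdot v = v\,\partial_x^p + (\text{order} < p)$, which follows by an easy induction on $p$ from $\partial_x v = v\,\partial_x + v'$. Everything else is bookkeeping, and this lemma simultaneously underwrites the termination of the division algorithm (each step strictly decreases the degree of the remainder-so-far), the well-definedness of degree under products, and the uniqueness argument. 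I expect the termination/descent step of the algorithm to be the only place requiring genuine care, and even there the decrease in degree at each stage is immediate from the leading-term computation above, so the induction is straightforward.
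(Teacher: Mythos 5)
Your argument is correct and complete: the leading-term lemma $\partial_x^p\cdot v = v\,\partial_x^p + (\text{order}<p)$ is exactly the right pivot, and it does simultaneously give termination of the descent, additivity of degrees, absence of zero divisors, and hence uniqueness. The paper states this proposition without proof, and what you have written is precisely the standard division argument for Ore polynomial rings that the authors are implicitly relying on, so there is nothing to correct or compare.
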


Using these notions, we can now formulate a very basic but important arithmetic result.

\begin{prop}
	\label{prop:cartier}
If all solutions of~\eqref{eq:lineardiff} are algebraic functions, then for all but a finite number of prime numbers $p$, the remainder of the right Euclidean division of $\partial_x^p$ by $\op L$ has all its coefficients divisible by~$p$.
\end{prop}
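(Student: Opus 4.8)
The plan is to pass to characteristic $p$ and use one elementary fact: over $\F_p$, the operator $\partial_x^p$ annihilates every power series, because $\frac{d^p}{dx^p}\,x^k = k(k-1)\cdots(k-p+1)\,x^{k-p}$ and a product of $p$ consecutive integers is divisible by $p$. The hypothesis will enter \emph{only} through Eisenstein's criterion (\cref{prop:eisenstein}), which guarantees that an algebraic power series can be reduced modulo all but finitely many primes.

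First I would make two harmless changes of variable. Translating $x$ by a suitable $x_0 \in \Q$ (there are only finitely many singular points to avoid), I may assume $0$ is an ordinary point of $\op L$, so that \eqref{eq:lineardiff} has a fundamental system of solutions $y_1,\dots,y_r \in \Q[[x]]$. By hypothesis each $y_i$ is algebraic over $\Q(x)$, so \cref{prop:eisenstein} provides an integer $N\neq 0$ with $y_i(Nx) \in \Z[[x]]$ for all $i$; rescaling $x \mapsto Nx$, I may assume $y_1,\dots,y_r \in \Z[[x]]$. Both substitutions are invertible over $\Q$ and, for all but finitely many $p$, compatible with reduction modulo $p$ and with the operator $\partial_x^p$, so it suffices to prove the statement after performing them.

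Now let $p$ avoid the finitely many primes dividing a denominator of a coefficient of $\op L$, or a denominator occurring in the left Euclidean quotient $Q$ or remainder $R$ of $\partial_x^p = Q\op L + R$ (these denominators are built from the leading coefficient of $\op L$, so only finitely many primes occur, uniformly in $p$), or dividing the determinant $\det(y_i^{(j)}(0))_{1\le i\le r,\, 0\le j\le r-1} \in \Q^{\times}$ (nonzero, as the $y_i$ form a fundamental system at an ordinary point). For such $p$, reduction modulo $p$ turns $\partial_x^p = Q\op L + R$ into a bona fide left Euclidean division $\partial_x^p = \bar Q\,\bar{\op L} + \bar R$ in $\F_p(x)\langle\partial_x\rangle$ with $\deg\bar R < r$, so $\bar R$ is the remainder of $\partial_x^p$ by $\bar{\op L}$. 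Applying both sides to $\bar y_i \in \F_p[[x]]$ and using $\partial_x^p\,\bar y_i = 0$ (the fact recalled above) together with $\bar{\op L}\,\bar y_i = 0$, we get $\bar R\,\bar y_i = 0$ for $i = 1,\dots,r$. But $\bar R$ has order $< r$, while the $\bar y_i$ remain linearly independent over the constants of $\F_p[[x]]$ --- this is exactly where the determinant being nonzero modulo $p$ is used --- and an operator of order $< r$ cannot annihilate $r$ such functions unless it is zero. Hence $\bar R = 0$, that is, every coefficient of $R$ is divisible by $p$, which is the desired conclusion.

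The conceptual content is essentially trivial once one notices that $\partial_x^p$ acts as $0$ on power series modulo $p$; the only real work --- and the step I expect to be the main obstacle to writing a clean proof --- is the uniformity bookkeeping, namely arranging that a single cofinite set of primes makes the reductions of the fundamental system, of the determinant of its initial conditions, and of the Euclidean division all well-behaved at once, together with the two preliminary changes of variable. (Alternatively one could run the argument inside the Picard--Vessiot extension of $\op L$, which under the hypothesis is a finite separable extension of $\Q(x)$ on which $\partial_x^p$ vanishes by uniqueness of the extension of the derivation, the bad primes being absorbed into the choice of an integral model; but the power-series version fits the elementary spirit of the article.)
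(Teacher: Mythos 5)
Your proof is correct, and it takes a more direct route than the paper, which states \cref{prop:cartier} without proof in \S 1 and only ``discusses'' it in \S\ref{sec:ratsolsp}: there the argument factors through (i) Eisenstein's criterion to obtain a full basis of solutions in $\Z[\tfrac{1}{D}][[x]]$, (ii) Andr\'e's Theorem~\ref{theo:Andre} (quoted, not proved) to deduce that almost all $p$-curvatures vanish, and (iii) the characteristic-$p$ Cartier lemma (Theorem~\ref{prop:cartier lemma}) to translate this into $\op L_p$ dividing $\partial_x^p$. You share step (i) but replace (ii)--(iii) by the single elementary observation that $\partial_x^p$ annihilates $\F_p[[x]]$, so that the reduced remainder $\bar R$, of order $<r$, kills the $r$ reduced solutions $\bar y_i$ and must vanish because their Wronskian is a unit at the origin; in effect you supply a self-contained proof of exactly the special case of Andr\'e's theorem that is needed. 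Your bookkeeping is sound: the denominators of $Q$ and $R$ are powers of the leading coefficient $a_r(x)$ (times the integer denominators of the $a_i$), so the bad primes form a finite set independent of $p$, and uniqueness of Euclidean division over $\F_p(x)$ identifies $\bar R$ with the remainder of $\partial_x^p$ by $\bar{\op L}$; note also that the Wronskian argument can be run over the field $\F_p((x))$, so regularity of $\bar R$'s coefficients at $0$ is not even required. One convention you resolved implicitly but correctly: for ``apply both sides to $\bar y_i$'' to work, the division must be written $\partial_x^p = Q\,\op L + R$, so that the term $Q\,\op L$ kills solutions of $\op L$; this is what the paper's worked examples actually compute, even though its Euclidean-division proposition is typeset as $A = BQ + R$.
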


\begin{proof}
By Eisenstein's criterion (\cref{prop:eisenstein}), there exists a basis of algebraic solutions $y_1,\ldots, y_r$ of $\op L$ and an integer $N\in\N\setminus \{ 0 \}$ such that $y_i(x)$ is in $\Q + x\Z[[\frac{x}{N}]]$ for all $i$. This implies that for all but a finite number of prime numbers $p$ (namely, the ones dividing $N$ and the denominators of $y_i(0)$) the power series $y_i^{(p)}(x)$ are in $p \Z[[x]]$, i.e. zero modulo~$p$. Thus, writing $\partial_x^p = Q \op L + R$ with $R$ of order at most $r-1$, we have $R(y_i) = 0 \bmod p$ for all $i$. Writing $R = c_0(x) + \cdots + c_{r-1}(x) \partial_x^{r-1}$, we deduce that the vector $(c_0, \ldots, c_{r-1})$ times the Wronskian matrix of the $y_i$'s is zero modulo~$p$. Since the Wronskian matrix is invertible (because the $y_i$'s form a basis of solutions of $\op L$), it is also invertible modulo~$p$ for all but a finite number of prime numbers~$p$, and thus
the $c_i$'s are all~$0$ modulo~$p$ for any such prime~$p$.
\end{proof}

\begin{example}
The generating function of the Catalan numbers, $y(x) = \sum_{k \geq 0} C_k x^k$,
satisfies the differential equation
$(4x^2-x) y''(x)+(10x-2) y'(x) + 2y(x)=0$, which is easily deduced, either from the inhomogeneous differential equation of order 1 in~\cref{ex:Catalan}, or directly from the recurrence  $(k+2) C_{k+1} - (4k+2)C_k= 0$.
The associated differential operator is $\op L = \left(4 x^{2}-x \right) \partial_x^{2}+\left(10 x -2\right) \partial_x +2$, and the remainders of the right Euclidean divisions of $\partial_x^p$ by $\op L$ for $p\in \{ 2, 3, 5 \}$ are
\begin{align*}
\partial_x^2 \bmod \op L 
& =  -\frac{2 \left(5 x -1\right)}{x \left(4 x -1\right)} \partial_x-\frac{2}{x \left(4 x -1\right)}, \\
\partial_x^3 \bmod \op L 
& = \frac{6 \left(22 x^{2}-9 x +1\right)}{x^{2} \left(4 x -1\right)^{2}} \partial_x
  +\frac{6(6 x -1)}{x^{2} \left(4 x -1\right)^{2}}, \\
\partial_x^5 \bmod \op L
& = 
  \frac{120 \left(386 x^{4}-325 x^{3}+110 x^{2}-17 x +1\right)}{x^{4} \left(4 x -1\right)^{4}}\partial_x
  +\frac{120(130 x^{3}-69 x^{2}+14 x -1)}{x^{4} \left(4 x -1\right)^{4}}.
\end{align*}
Note that indeed, we have $\partial_x^p \bmod \op L = 0$ modulo $p$, in the three cases.
\end{example}

\cref{prop:cartier} will be discussed in more detail in~\S\ref{ssec:groth-general}. 
For now, let us simply observe how it implies the transcendence of $\exp(x)$.
In this case $\op L = \partial_x - 1$. Hence $\partial_x^p \bmod \op L$ is equal to 1 for all~$p$. Indeed, in this case, $\op L$ and $\partial_x$ commute, hence the computation of the remainder is the same as the computation in $\Q[x]$ of the remainder of $x^p$ by $x-1$, that is the evaluation of $x^p$ at $x=1$. 

\begin{example}\label{ex:log}
Let us consider the logarithmic function $y(x) = \log(1-x)$. Of course, since $\log(\exp(x))=x$ and $\exp(\log(1-x))=1-x$, the transcendence of the logarithm function follows from that of the exponential function. However, the two arithmetic criteria can be used directly.
First, Eisenstein's criterion (\cref{prop:eisenstein}) can be applied since $\log(1-x)=-\sum_{k
\geq 1} x^k/k$. Second, we have that a full basis of solutions of $\op L = (1-x)\partial_x^2 - \partial_x$ is $\{ 1, \log(1-x) \}$, and it holds by induction that
\[
\partial_x^n \bmod \op L = \frac{(n-1)!}{(1-x)^{n-1}} \partial_x \qquad \text{for all} \; n \geq 1.
\]
Therefore, Wilson's theorem implies that modulo any prime number~$p$, the remainder $\partial_x^p \bmod \op L$ is equal to $-\frac{1}{(1-x)^{p-1}} \partial_x$, hence it is never 0. Then, \cref{prop:cartier} implies that $\log(1-x)$ is transcendental.
\end{example}	

\medskip

A natural question is whether the converses of \cref{prop:eisenstein} and \cref{prop:cartier} have
any chance to hold true. Concerning \cref{prop:eisenstein}, it is not difficult to exhibit a transcendental D-finite power series with integer coefficients, showing that its converse is false.
A natural example that comes to mind is
 $y(x) = \sum_{k\geq 0} k!x^k$, which 
satisfies $x^2 y''(x)+(3x-1)y'(x)+y(x)=0$. 
One can prove that $y(x)$ is not algebraic in various ways. One of
them is again analytic, by observing that $y(x)$ has radius of convergence~$0$ and by applying \cref{TheoremD}. 
A purely algebraic
proof also exists, but it is less immediate: it relies on the combination of the following facts: (i)
$y(x)$ does not satisfy any first-order differential equation, hence the second-order differential equation above is the minimal-order LDE satisfied by $y(x)$; (ii) this second-order
differential equation admits in its solution space the transcendental solution $e^{-1/x}/x$;
(iii) if $y(x)$ were algebraic, then its minimal-order LDE would have only algebraic solutions (Proposition 2.5 in \cite{Singer79}).
 
However, the reader may object that this counterexample is ``degenerate'' since the coefficient sequence $(k!)_{k\geq 0}$ grows too fast, which is not compatible with the growth of the coefficient sequence of an algebraic function. A better converse of \cref{prop:eisenstein} would be: \emph{is there any example of a D-finite but transcendental function $y\in\Z[[x]]$, whose coefficient sequence grows at most geometrically?} The answer is again positive.

\begin{example}
	Let again $(C_k)$ be the sequence of Catalan numbers, and consider the function 
	$y(x) = \sum_{k\geq 0} C_k \binom{2k}{k}x^k$. 
	One easily checks that it is D-finite and it satisfies the second-order equation
$x(16x-1)y''(x) + 2(16x-1)y'(x) + 4y(x)=0$. From there, it follows that
$y(x)$ is the Gauss hypergeometric function
${}_2 F_1([1/2, 1/2], [2]; 16x)$
and classical results (that we shall recall in \S \ref{sec:hypergeom})
imply that $y(x)$ is transcendental.
\end{example}

Thus, even the stronger converse of \cref{prop:eisenstein} appears to be false. 
One may wonder if there is any way to reinforce even further the conclusion of \cref{prop:eisenstein}, such that its converse becomes true. As of today, this is still an open problem, although there exist conjectural statements in this spirit. One of them is the following:

\begin{conj}[Christol-André conjecture]\label{conj:ChristolAndre}
Assume that  $y(x) = \sum_{k \geq 0} a_k x^k \in \Q[[x]]$ is D-finite.
Let $\op L$ be the minimal-order monic linear differential equation satisfied by~$y(x)$. We assume that:
\begin{itemize} 
\item[$(1)$] the sequence $(a_k)_{k \geq 0}$ has at most geometric growth;
\item[$(2)$] there exists $N\in\N$ such that $y(Nx) - y(0) \in \Z[[x]]$;
\item[$(3)$] the point $x=0$ is not a pole of any of the coefficients of $\op L$.
\end{itemize}
Then, $y(x)$ is algebraic.
\end{conj}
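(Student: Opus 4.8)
Since the statement above is phrased as a \emph{conjecture}, what follows is a strategy rather than a proof, and I will say where it stalls. The natural first move is to translate the three hypotheses into the language of $G$-functions. Conditions $(1)$ and $(2)$ together say exactly that $y$ is a \emph{globally bounded $G$-function}: D-finiteness plus the geometric growth of the $a_k$ (and, since they lie in $\Q$, trivially of all their archimedean conjugates) gives the analytic half of the $G$-function property, while $(2)$ --- which is precisely the conclusion of Eisenstein's criterion, so that the conjecture is a conditional converse to \cref{prop:eisenstein} --- controls the $p$-adic denominators and provides the non-archimedean half in its strong, ``globally bounded'' form. The first genuine external input I would invoke is then the Chudnovsky theorem: the minimal-order monic operator $\op L$ annihilating a $G$-function is a \emph{$G$-operator}. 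Combined with the André--Katz theorem that a $G$-operator has \emph{nilpotent} $p$-curvature for all but finitely many primes, this means $\op L$ automatically satisfies the ``nilpotence'' half of the Grothendieck--Katz picture --- a much stronger statement than the mod-$p$ vanishing of Cartier's remainder in \cref{prop:cartier}, but still weaker than what is needed. The entire content of the conjecture is to upgrade this to algebraicity of the specific solution $y$.

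The role I would assign to condition $(3)$ is to kill the \emph{local} obstruction to that upgrade. Because $x=0$ is an ordinary point of the monic operator, Cauchy's existence theorem gives a basis $y_1,\dots,y_r$ of the local solution space at $0$ consisting of power series in $\Q[[x]]$, and $y$ is a $\Q$-linear combination of them; minimality of $\op L$ forces $y, y', \dots, y^{(r-1)}$ to be a $\Q(x)$-basis of the associated differential module, i.e. $y$ is a cyclic vector. Now \cref{ex:log} is instructive: there the $p$-curvature is nilpotent of square zero but nonzero, and the obstruction is concentrated at the \emph{finite} singular point $x=1$, where the local monodromy is unipotent and the denominators of the Taylor coefficients blow up ``because of'' that singularity. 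The hope is that a \emph{globally bounded} cyclic solution at an \emph{ordinary} point cannot carry such a unipotent-but-nontrivial part at all: global boundedness is a strong Diophantine rigidity constraint that ought to forbid the genuinely nilpotent behaviour, while ordinariness at $0$ removes the unipotent local monodromy there. If one could turn this into a proof that the $p$-curvature of $\op L$ in fact \emph{vanishes} for almost all $p$, one would then appeal to the unconditionally known cases of the Grothendieck--Katz conjecture --- operators of geometric origin, as in Katz's work on Gauss--Manin connections --- to conclude that every solution of $\op L$, and in particular $y$, is algebraic.

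The main obstacle is precisely the passage from ``nilpotent'' to ``zero'' $p$-curvature. This is the heart of the Grothendieck--Katz conjecture itself, it is open in general, and it is simply not known that hypotheses $(2)$ and $(3)$ are sufficient to force it; they rigidify the situation without visibly closing the gap. An alternative route would go through Christol's conjecture --- every globally bounded D-finite series should be the diagonal of a rational function in several variables --- followed by an argument that such a diagonal, if it is a $G$-function and is regular at $x=0$, must be algebraic; but Christol's conjecture is itself open, and so is that final implication, so this merely relocates the difficulty. In practice the Christol--André conjecture is established only in special cases (small order, or hypergeometric operators, where one can combine Christol's results with the Beukers--Heckman classification of algebraic hypergeometric functions recalled in \S\ref{sec:hypergeom}), and a proof of the general statement would very plausibly require substantially new arithmetic ideas.
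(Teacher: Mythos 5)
The statement you were asked about is labelled a \emph{conjecture} in the paper, and the paper contains no proof of it --- it is stated as an open problem, a strengthened converse to Eisenstein's criterion (\cref{prop:eisenstein}). You correctly recognize this, so there is no argument of the paper to compare yours against; what can be assessed is whether your strategic outline is accurate, and it essentially is. Your translation of hypotheses $(1)$ and $(2)$ into ``$y$ is a globally bounded $G$-function'', the appeal to the Chudnovsky theorem (the minimal operator of a $G$-function is a $G$-operator) and to the André--Katz nilpotence of its $p$-curvatures, and your identification of the passage from \emph{nilpotent} to \emph{zero} $p$-curvature as the point where everything stalls, are all consistent with the state of the art as the paper describes it (\S\ref{ssec:progressGK}). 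Your remark on the role of condition $(3)$ --- and its conjectural variant $(3b)$ about the absence of logarithms in the local solutions at $0$ --- also matches how the paper uses the conjecture later, e.g.\ to deduce transcendence of diagonals from the presence of local logarithms.

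One imprecision worth flagging: even if one could upgrade nilpotent to vanishing $p$-curvature for almost all $p$, your proposed final step --- ``appeal to the unconditionally known cases of Grothendieck--Katz for operators of geometric origin'' --- would not go through as stated, because the minimal operator of a globally bounded $G$-function is not known to be of geometric origin (that is the Bombieri--Dwork conjecture, itself open). So your route actually requires \emph{two} open inputs stacked on top of each other, not one. This does not invalidate your discussion, since you present it as a strategy that stalls rather than a proof, but the gap is wider than your last paragraph suggests.
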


\noindent
If $r$ denotes the order of~$\op L$, it is also conjectured that condition (3) can be replaced by
\begin{itemize} 
\item[$(3b)$] $\op L$ has $r$ linearly independent solutions in $\C((x^{1/D}))$ for some positive integer $D$.
\end{itemize}

Now, what about the converse of \cref{prop:cartier}? It turns out that this converse is one of the simplest formulations of what is usually called the \emph{Grothendieck conjecture}. This conjecture has been formulated in the late 1960s and it has a rich history. It was proved for some important classes of differential equations~\eqref{eq:lineardiff}, which will be discussed in \cref{sec:Grothendieck}.

\begin{conj}[Grothendieck's conjecture, version 1]\label{conj:Grothendieck1} 
Let $\op L \in \Q(x)\langle\partial_x\rangle$ be the differential operator attached to~\eqref{eq:lineardiff}.
If for all but a finite number of prime numbers $p$, the remainder of the right Euclidean division of $\partial_x^p$ by $\op L$ has all its coefficients divisible by~$p$, then 
all solutions of~\eqref{eq:lineardiff} are algebraic functions over $\Q(x)$.
\end{conj}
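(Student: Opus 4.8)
This is a problem that has been open for more than fifty years, so what follows is a strategy, together with an honest account of where it stalls, rather than a proof; the partial results the strategy does yield are precisely those surveyed later in \cref{sec:Grothendieck}.

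The plan is to move both the hypothesis and the conclusion into the language of the \emph{differential Galois group} $G$ attached to $\op L$ over $\Q(x)$ --- when $\op L$ is Fuchsian, the Zariski closure of the monodromy representation of its local solutions. On the conclusion side, the Galois correspondence of Picard--Vessiot theory gives: all solutions of~\eqref{eq:lineardiff} are algebraic over $\Q(x)$ \emph{if and only if} $G$ is finite, equivalently its Lie algebra $\mathfrak g$ vanishes. On the hypothesis side, the companion-matrix form of the remainder $\partial_x^p \bmod \op L$, read modulo $p$, is exactly the \emph{$p$-curvature} $\psi_p$ of the reduction of $\op L$ modulo $p$ (this is what lies behind \cref{prop:cartier}), and the statement ``$\psi_p$ has all coefficients divisible by $p$'' just says $\psi_p = 0$. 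So the conjecture becomes: \emph{if $\psi_p = 0$ for all but finitely many $p$, then $\mathfrak g = 0$.} Grothendieck's own phrasing is equivalent: $\op L$ has a full space of algebraic solutions exactly when, for almost every $p$, the reduced operator $\op L \bmod p$ already has a full space of solutions in characteristic $p$.

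The first real ingredient is a theorem of Katz: the $p$-curvatures control $\mathfrak g$, in the sense that for almost all $p$ the operator $\psi_p$ is conjugate into $\mathfrak g \otimes \overline{\F}_p$, and, conversely, $\mathfrak g$ is generated over $\Qbar$ --- as an algebraic Lie algebra closed under the Tannakian (tensor and subquotient) constructions --- by this family of mod-$p$ data. With this in hand one performs a d\'evissage on $\mathfrak g$: writing it as an extension of its semisimple quotient by its solvable radical, and the radical as a tower of abelian layers, it suffices to treat the abelian case and the semisimple case separately. In the abelian case everything reduces to rank one, where the conjecture is essentially classical: a first-order equation $y' = r(x)\,y$ with $r \in \Q(x)$ has an algebraic solution exactly when $r$ is a $\Q$-linear combination of logarithmic derivatives $f'/f$ with $f \in \Q(x)^{\times}$, and vanishing of $\psi_p$ for almost all $p$ forces exactly this by a Chebotarev / class-field-theory argument (in effect by Kronecker--Weber). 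Read backwards, this reproves that $\exp(x)$ and $\log(1-x)$ are transcendental, now \emph{deduced} from the $p$-curvature computations of \cref{ex:exp} and \cref{ex:log}.

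The genuinely hard part --- and the reason the conjecture is still open --- is the case of a connected \emph{semisimple} $\mathfrak g$, for which no unconditional argument is known. What one can do is import extra structure when it is available: if $\op L$ is a Picard--Fuchs equation, that is a Gauss--Manin connection coming from a family of algebraic varieties, Katz proved the conjecture by playing the Hodge filtration against the global nilpotence forced by geometry; and the case of rigid irreducible local systems then follows, since Katz's middle-convolution algorithm builds each of them out of rank-one pieces while preserving vanishing of the $p$-curvature. The obstruction in general is a genuine local-to-global gap: the hypothesis is an infinite list of congruences, one for each prime $p$ and each a statement purely in characteristic $p$, and nothing visibly forces these separate coincidences to be the reduction of a single characteristic-zero phenomenon --- finiteness of the monodromy --- unless the equation is already known to be of geometric origin or to be rigid. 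Bridging that gap, presumably by attaching to the solution space an arithmetic-geometric object (motivic, $p$-adic, or Arakelov-theoretic) robust enough to survive reduction modulo \emph{all} primes simultaneously, is where a proof of the full statement would have to do its real work.
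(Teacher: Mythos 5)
The statement in question is \cref{conj:Grothendieck1}, i.e.\ the Grothendieck--Katz $p$-curvature conjecture, which is an open problem; the paper states it as a conjecture and contains no proof, so there is nothing to compare your argument against, and you were right not to manufacture one. Your reformulation (vanishing of the $p$-curvature for almost all $p$ versus finiteness of the differential Galois group), your reduction of the abelian/rank-one layer to a Kronecker-type splitting statement, and your list of the cases actually known (Katz for Picard--Fuchs equations, the Chudnovsky brothers and Andr\'e for solvable-type Galois groups, rigid local systems) coincide with the partial results the paper itself surveys in \cref{sec:Grothendieck}, and you correctly locate the unresolved difficulty in the semisimple case. The one imprecision worth flagging is that the rank-one case rests on Kronecker's theorem that an irreducible polynomial of $\Q[x]$ with a root modulo almost every prime is linear (\cref{theo:kronecker}, a consequence of Chebotarev), rather than on Kronecker--Weber.
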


\begin{example}\label{ex:L2r}
Consider the operator
\[\op L = 2 x \left(x-1 \right) \partial_x^{2}+\left(4 x -1\right) \partial_x + 1.
\]
Then, for any prime number $p>2$, the reduction modulo $p$ of $\partial_x^p \bmod \op L$ is equal to 0 if $p\equiv 1 \bmod 4$; else, it is equal to
\[
-\frac{2}{(x(x-1))^{\frac{p-1}{2}}} \partial_x - \frac{1}{(x-1)^{\frac{p+1}{2}} x^{\frac{p-1}{2}}} .
\]
Therefore, for half of the primes $p$, the remainder is nonzero, hence \cref{prop:cartier} implies that $\op L$ does not admit only algebraic solutions.
\end{example}

\begin{example}
Consider now the operator
\[\op L_r = 2 x \left(x-1 \right) \partial_x^{2}+\left(4 x -1\right) \partial_x +2 r(1-r), \quad \text{with} \; r\in\Q\setminus \{ 1/2 \},
\]
which is a tiny modification of the operator in \cref{ex:L2r}: only the constant term has changed.
Then, 
for any prime $p \neq 2$ not dividing the denominator of $r$,
the reduction modulo~$p$ of $\partial_x^p \bmod \op L_r$ is equal to
\[
\frac{r(r+1) \cdots \reallywidehat{(r + \frac{p-1}{2})} \cdots (r+p-1)}{(x(x-1))^{\frac{p-1}{2}}} \partial_x + \frac{r(r+1) \cdots \reallywidehat{(r + \frac{p-1}{2})} \cdots (r+p-1)}{2(x-1)^{\frac{p+1}{2}} x^{\frac{p-1}{2}}} ,
\]
where the notation $\reallywidehat{(r + \frac{p-1}{2})}$ indicates that $r + \frac{p-1}{2}$ is missing in the above products.
For any prime number $p>2$ not dividing the denominator of $r$ nor 
the numerator of $2r-1$, the previous remainder is zero modulo~$p$.
Therefore, for all but finitely many primes $p$, the remainder is zero modulo~$p$.

Can we conclude that the operator $\op L_r$ admits only algebraic solutions?
\cref{conj:Grothendieck1} predicts that the answer is positive and,
indeed, one easily checks that $\op L_r$ admits
the algebraic solutions
$f_1 = {\left(\sqrt{x}+\sqrt{x -1}\right)^{2 r-1}}/{\sqrt{x -1}}$
and
$f_2 = {\left(\sqrt{x}+\sqrt{x -1}\right)^{1-2 r}}/{\sqrt{x -1}}$,
which are linearly independent for $r\neq \frac12$ since
$f_1 f_2' - f_2 f_1' = (1-2 r)/(\sqrt{x}\, \left(x -1\right)^{{3}/{2}})$.
\end{example}

\section{Several natural differential equations have algebraic solutions}
\label{sec:examples}

As we have just seen in \cref{sec:intro}, 
although in general the solutions $y(x)$ of \eqref{eq:lineardiff} are 
transcendental functions (e.g.,  $y(x) = \exp(x)$ and $y(x) = \log(1-x)$), it may happen 
sometimes that they are algebraic.
Most of the examples given in \cref{sec:intro} were ``academic examples'', in the sense that they were simple and constructed to illustrate the exposition. 

The aim of this section is to give evidences showing that linear 
differential equations naturally appear in many areas of mathematics, 
including algebra, combinatorics and number theory. In all these settings, we 
shall see examples of differential equations that admit algebraic 
solutions and others that do not, demonstrating that both behaviors 
are common and deserve our attention. 
Hence, besides its beauty and intrinsic fundamental nature, being 
able to recognize whether a given D-finite function is algebraic or 
not appears as an important question, likely to have profound
applications. 

For instance, in number theory one often wants to understand the 
(algebraic or transcendental) nature of a complex number given as the 
value $f(\alpha)$ of a function $f\in \Q[[x]]$ at an algebraic point 
$\alpha\in\overline{\Q}$; understanding the algebraic nature of the 
corresponding function $f(x)$ is a first important step in this process. 
In the theory of the \emph{E-functions} (an 
important class of D-finite functions generalizing the exponential 
function), the methods of Siegel and Shidlovskii, revisited recently 
in~\cite{AdRi18} and~\cite{BoRiSa22}, are used to show that a number 
(such as $e=\exp(1)$) is transcendental by checking, among other things, 
that it is the value of a transcendental $E$-function. Another 
motivation comes from combinatorics, where predicting the nature of the 
generating function of some class of combinatorial objects is a central 
task, as it may reveal strong underlying structures. Indeed, knowing 
that a class of objects is counted by an algebraic function suggests 
that it should be possible to construct these objects recursively by 
concatenation of objects of the same type~\cite{Bousquet06}.

\subsection{Examples from Special functions: Hypergeometric functions}

\subsubsection{Elliptic integrals: Euler's differential equation} \label{sec:perimeter}

Perhaps one of the oldest special functions distinct from the classical algebraic, exponential, logarithmic, or trigonometric functions, is the one arising from the question: \emph{what is the perimeter $p(x)$ of an ellipse with semi-major axis $1$, 
as a function of its eccentricity~$x$?} 
(Recall that the eccentricity is the quotient between the focal distance and the semi-major axis.)

This question, which was solved by Euler~\cite[\S7]{Euler1733} in 1733, is more challenging than the analogue one with ``perimeter'' replaced by ``area'', since the area is expressible algebraically as $\pi \sqrt{1-x^2}$.
 First, we express the arc length using a real integral and the parametrization of the ellipse:
\begin{equation} \label{eq:ellipse}
	 p(x) = 4 {\bigintsss_{0}^1\sqrt{\frac{1-x^2 u^2}{1-u^2}} \, \ud u} = 
2 \pi  -   \frac{\pi}{2} x^2  -  \frac{3\pi}{32} x^4  
- \frac{5\pi}{128} x^6
-\frac{175\pi}{8192} x^8 - \cdots.
\end{equation}
Up to the factor of 4, the function $p(x)$ is called the \emph{complete elliptic integral of the second kind}.
The second equality above is obtained by expanding the integrand in power series with respect to $x$, and integrating between 0 and 1. 
It is a fact familiar to algebraic geometers that this function $p(x)$ satisfies a linear differential equation: $p(x)$ is what we call a \emph{period function} and, as such, it satisfies a linear differential equation called Picard-Fuchs differential equation (or Gauss-Manin connection).
An alternative way to establish this fact and to find a differential equation satisfied by $p(x)$ is to use the ``method of creative telescoping''~\cite{AlZe90}.
The ``magic'' of creative telescoping is that it constructs the equality below, which expresses a linear combination of the integrand and of its first and second derivative with respect to~$x$ as a pure derivative with respect to $u$ of another algebraic function (a rational multiple of the integrand):
  \begin{equation} \label{eq:telescopic_ellipse}
    \left( {(x-x^3)\partial_x^2+(1-x^2)\partial_x + x}\right)
\left(\sqrt{\frac{1-x^2 u^2}{1-u^2}}\right) = \partial_u\left( \frac{x u \sqrt{1-u^{2}}}{\sqrt{1-x^{2} u^{2}}} \right) .
  \end{equation}
Now, integrating both sides of \cref{eq:telescopic_ellipse} with respect to $u$, it follows that $p(x)$ is a D-finite function with respect to $x$, and that it satisfies the linear differential equation
\begin{equation} \label{deq:Euler}
	(x - x^3)p''(x) + (1-x^2) p'(x) + x p(x) = 0.
\end{equation}
Writing $p(x) = \sum_{k \geq 0} a_k x^k$, we deduce from \cref{deq:Euler} the recursion
$(k - 1) (k + 1) a_k = (k + 2)^2  a_{k+ 2}$ for all $k \geq 0$.
From $a_0=2\pi$ and $a_1=0$ it follows that
\[
a_{2k} = \frac{2\pi \binom{2k}{k}^2}{(1-2k)16^k} \quad \text{and} \quad
a_{2k+1}=0 \quad \text{for all} \; k \geq 0.
\]
Stirling's formula then implies that $a_{2k} \sim -1/k^2$, and hence transcendence of $p(x)$.
Indeed, the presence of the factor $k^{-2}$ is incompatible with algebraicity by 
Flajolet's criterion (\cref{crit:Flajolet}).

\subsubsection{Elliptic integrals: Legendre's differential equation} \label{sec:Legendre}

A special function similar to $p(x)$ is obtained by a different construction.
Consider the family of elliptic curves (with $x\in\mathbb{C}$) given by the 
\emph{Legendre equation}
\[E_x : \qquad y^2 = u(u-1)(u-x).\]
On $E_x$, there exists a unique (up to a constant multiple) holomorphic 1-form, given by
\[
\omega_x = \frac{\ud u}{y} = \frac{\ud u}{\sqrt{u(u-1)(u-x)}} .
\]
This form is necessarily closed since it is a holomorphic 1-form on a variety of complex dimension 1.
The method of creative telescoping finds an exact form that is a linear combination of $\omega_x$ and of its first and second derivatives with respect to $x$, namely
\begin{equation}\label{eq:exactform}
    \left( {(4x^2-4x)\partial_x^2+(8x-4)\partial_x + 1}\right) \omega_x = 
- \ud \left( \frac{2 \sqrt{u(u - 1)}}{(u -x)^{3/2}}
 \right) .
\end{equation}
From \cref{eq:exactform} it follows that the integral $y(x)=\int_\gamma \omega_x$ 
over any closed curve $\gamma$ on $E_x$
satisfies the
\emph{Legendre differential equation}
\begin{equation} \label{deq:Legendre}
 (4x^2-4x) y''(x) + (8x-4) y'(x) + y(x) = 0.
\end{equation}
This is the most basic case of the \emph{Picard–Fuchs differential equation} of a period function.
For instance, by taking $C$ to be the curve on $E_x$ given by the double cover $y=\pm \sqrt{u(u-1)(u-x)}$ of $[1,\infty)$, the corresponding period is the 
\emph{complete elliptic integral of the first kind}, 
\[
\bigintssss_C \omega_x = 2 \bigintssss_1^\infty \frac{\ud u}{\sqrt{u(u-1)(u-x)}} = \sum_{k \geq 0}
b_k x^k,
\]
where $b_0 = 2 \bigintsss_1^\infty \frac{\ud u}{u\sqrt{u-1}} = 2\pi$ and 
$\left(2 k +1\right)^{2} b_k = 4 \left(k +1\right)^{2} b_{k +1}$ for all $k\geq 0$, this recurrence relation being a consequence of the fact that $y(x) = \int_C \omega_x$ satisfies \cref{deq:Legendre}. Thus,
\begin{equation} \label{eq:Legendre}
\bigintssss_C \omega_x = 2\pi \sum_{k \geq 0} \binom{2k}{k}^2 \left( \frac{x}{16} \right) ^k.
\end{equation}
Once again, Stirling's formula gives $b_{k} \sim 2/k$, which excludes algebraicity of $y(x) = \int_C \omega_x$ 
by Flajolet's criterion (\cref{crit:Flajolet}).

\subsubsection{Gauss' hypergeometric functions} 
\label{sec:hypergeom}

The D-finite functions considered in \cref{sec:perimeter,sec:Legendre} are special cases of the \emph{Gauss hypergeometric function} with parameters $a,b,c\in\mathbb{Q}$, $c\notin \Z_{\leq 0}$, defined by
\begin{equation} \label{def:2F1}
	{}_2 F_1 ([a,b], [c]; x) \coloneqq  \sum_{k \geq 0} \frac{(a)_k (b)_k}{(c)_k k!} x^k,
\end{equation}
where $(a)_k=a(a+1)\cdots(a+k-1)$ denotes the rising factorial.
Indeed, $p(x)$ in \cref{eq:ellipse} is equal to $2 \pi \cdot {}_2 F_1 ([-1/2,1/2], [1]; x^2)$, while $\int_C \omega_x$
in \cref{eq:Legendre} is equal to $2 \pi \cdot {}_2 F_1 ([1/2, 1/2], [1]; x)$. We have seen that in both cases these functions are transcendental.

In general, $y(x) = {}_2 F_1 ([a, b], [c]; x)$ satisfies the second-order differential equation
\begin{equation}\label{deq:2F1}
x(x - 1)  y''(x)  + ((a  + b + 1)x - c) y'(x) + a b y(x)=0
\end{equation}
and the name \emph{hypergeometric} comes from the fact that the coefficient sequence $(u_k)_{k \geq 0}$ of ${}_2 F_1 ([a, b], [c]; x) = \sum_{k \geq 0} u_k x^k$ 
satisfies a linear recurrence of order 1, namely
\[(a + k) (b + k) u_k = (k + 1) (k + c) u_{k + 1}, \qquad (k \geq 0).\]
For other choices of parameters we recover the functions
\begin{align*}
(1-x)^{\alpha} & =  {}_2 F_1 ([-\alpha, 1], [1]; x), \quad \text{for all} \; \alpha\in\Q,\\
\sum_{k \geq 0} C_k x^k & =  {}_2 F_1 ([1, 1/2], [2]; 4x), \\
\log(1-x)  & =  -x \cdot {}_2 F_1 ([1, 1], [2]; x), \\
\arcsin(x) & =  x \cdot {}_2 F_1 ([1/2, 1/2], [3/2]; x^2),
\end{align*}
the first two of which are algebraic, the last two of which are transcendental. 
In some cases, the Gauss' hypergeometric function even becomes a polynomial: this is so for
\[
P_n(x)  = {}_2 F_1 ([-n, n+1], [1]; (1-x)/2),
\]
the \emph{Legendre polynomial} given by $P_n(x) \coloneqq  \frac{1}{2^n \cdot n!} \cdot \frac{\partial^n}{{\partial x}^n} (x^2 -1)^n$, as well as for 
\[
T_n(x)  = (-1)^n \cdot {}_2 F_1 ([-n, n], [1/2]; (x+1)/2),
\]
the \emph{Chebyshev polynomial of the first kind} given by $T_n(\cos x) = \cos (nx)$.

Deciding the algebraicity of $_2F_1$ hypergeometric functions is an old problem, solved by Schwarz~\cite{Schwarz1873} using geometric tools (Riemann mappings, Schwarzian derivatives and sphere tilings by spherical triangles) and by Landau~\cite{Landau1904,Landau1911} and Errera~\cite{Errera1913} using arithmetic tools (Eisenstein's criterion for algebraic power series, and Dirichlet's theorem on prime numbers in arithmetic progressions).  
Both approaches are algorithmic:  Schwarz's criterion reduces the problem to a table look-up after some preprocessing on the parameters $a,b,c$; the Landau-Errera criterion amounts to checking a finite number of inequalities. 

More precisely, 
let us assume that none of $a$, $b$, $c-a$ and $c-b$ is an integer (equivalently, the operator 
$H(a,b;c) \coloneqq x(1-x)\partial_x^2 +
(c-(a  +  b  +  1)x)\partial_x- ab$ is irreducible)
and let $D$ be the common denominator of $a, b$ and~$c$.
Then, the Landau-Errera criterion says that the following assertions are equivalent:
\begin{enumerate}
\item the hypergeometric function ${}_2 F_1 ([a, b], [c]; x)$ is algebraic;	
\item the operator $H(a,b;c)$ admits only algebraic solutions;
\item for every~$\ell < D$ coprime with~$D$,
either $\{ \ell a \} < \{ \ell c \} < \{ \ell b \}$ or $\{ \ell b \} < \{ \ell c \} < \{ \ell a \}.$
(Here $\{ x \}$ denotes the fractional part $x - \lfloor x \rfloor$ of~$x$.)
\end{enumerate}
The last condition is equivalent to the fact that, 
for every~$\ell< D$ coprime with~$D$,
the two sets 
$\{ e^{2\pi i \ell a}, e^{2\pi i \ell b} \}$
and
$\{ e^{2\pi i \ell c}, 1 \}$
are interlaced on the unit circle. 
This ``interlacing condition'' was first proved by Landau~\cite{Landau1904,Landau1911} to be necessary for the algebraicity of ${}_2 F_1 ([a, b], [c]; x)$ and then proved to also be sufficient by Errera~\cite{Errera1913}; see also Stridsberg's intermediate contribution~\cite{Stridsberg1910}, which relates the 
conditions in Eisenstein's criterion to the ones in the Landau-Errera condition. 
In \S\ref{ssec:progressGK} we will see that \cref{theo:beukers and heckmann} provides an extension of the Landau-Errera ``interlacing criterion'' to the generalized hypergeometric function $_{s+1}F_s$ defined by
\begin{equation} \label{deq:sFs}
	{}_{s+1} F_s ([a_1, \ldots, a_{s+1}], [b_1, \ldots, b_s]; x) = \sum_{k \geq 0} \frac{(a_1)_k \cdots (a_{s+1})_k}{(b_1)_k \cdots (b_{s})_k k!} x^k.
\end{equation}

\subsection{Examples from Algebra: Diagonals}

As proved in~\cref{sec:intro},
algebraic functions are D-finite. A larger, yet very important, class of D-finite functions is formed by the \emph{diagonals of rational functions}. 
By definition, the \emph{diagonal} of a multivariate power series 
	\[ F = \sum_{{(i_1,\dots,i_n)\in\N^n}}a_{i_1,\dotsc,i_n} x_1^{i_1}\dotsm x_n^{i_n} \; \in\Q[[x_1, \ldots, x_n]]\]
is the univariate power series
 \[ \Diag(F) = \sum_{i \in\N} a_{i,\dotsc,i}t^i \; \in \Q[[t]] .\]

\begin{example} [Dyck bridges]\label{ex:Polya-Dyck}
Let $B_{n}$ be the number of $\{\uparrow, \rightarrow\}$-walks in $\mathbb{Z}^2$ 
from $(0,0)$ to $(n,n)$ (i.e., there are exactly $B_{n}$ ways of going from the origin to $(n,n)$ using only North and East steps, see \S\ref{ssec:walks} for a more general context)
and let $B(t)$ be its generating function $\sum_{n\geq 0} B_n t^n$.
Then,
\[ B(t) = \Diag\left( \frac{1}{1-x-y}\right) = \sum_{n \geq 0}\binom{2n}{n}t^n. \]
\end{example}	

This is perhaps the simplest example of a diagonal. By the binomial theorem, it comes that 
$B(t) = {1}/{\sqrt{1-4t}}$, hence this diagonal is even an algebraic function. This is not an accident. Indeed, a century ago Pólya~\cite{Polya22}
proved that diagonals of bivariate rational functions are algebraic. 
(Later, Furstenberg~\cite{Furstenberg67} showed that the converse also holds true.)
Pólya's result can be proved as follows. First, using the simple observation\footnote{Here, and in all the text, $[x^n]$ denotes coefficient extraction of~$x^n$.}
$\textsf{Diag} (F)(t) = [x^0] \, F(x,t/x)$, the diagonal of the rational
function $F(x,y)\in \Q(x,y)$ is encoded as a complex integral using Cauchy's
integral theorem (for some $\epsilon>0$)
\[ \Diag\left( F \right)(t) = [x^{-1}] \frac{1}{x} F \left(
x, \frac{t}{x} \right) = \frac{1}{2 \pi i} \oint_{|x|=\epsilon} F \left( x,
\frac{t}{x} \right) \, \frac{dx}{x},\] 
which in a second step can be evaluated using the residues theorem as a sum of
residues (precisely: the residues of $F(x,t/x)/x$ at its ``small poles'',
having limit~0 at $t=0$). Each of these residues are algebraic functions, and
so is their sum $\textsf{Diag} (F)$.

\begin{example} [Dyck bridges, continued]\label{ex:Polya-Dyck-2}
The proof sketched above directly concludes that
\[  
\Diag\left( \frac{1}{1-x-y}\right)
= \frac{1}{2 \pi i} \oint_{|x|=\epsilon}  \frac{dx}{x-x^2-t} = 
{\left . {\frac{1}{1-2x}} \right |_{x = \frac{1-\sqrt{1-4t}}{2}} =} 
\frac{1}{\sqrt{1-4t}}.
\]
\end{example}	

\begin{example}\label{ex:diag3}
Interestingly, Pólya's result
becomes false for more than two variables. A simple example is provided by the rational function $1/(1-x-y-z) = \sum_{i,j,k} \frac{(i+j+k)!}{i!j!k!} x^iy^jz^k$, whose diagonal is
\[
\Diag \left( \frac{1}{1-x-y-z}\right) 
= \sum_{n \geq 0} \frac{(3n)!}{n!^3} t^n .
\] 
The transcendence of this function can
be proved in various ways, for instance by using asymptotics: 
Stirling's formula implies that $\frac{(3n)!}{n!^3} \sim \frac{\sqrt{3}}{2\pi} \, \frac{27^n}{n}$
and the presence of the factor $n^{-1}$ is incompatible with algebraicity
by \cref{crit:Flajolet}.
Another proof is based on rewriting
\begin{equation}\label{eq:hyperdiag}
\Diag \left( \frac{1}{1-x-y-z}\right) 
= {}_2 F_1\left(\left[\frac 1 3, \frac 2 3\right], \left[ 1 \right]; 27t\right),
\end{equation}
and by using the Schwarz or the Landau-Errera criteria mentioned above.
\end{example}	

The diagonal in~\cref{eq:hyperdiag} is hypergeometric, hence D-finite. In general, there is no
reason that the diagonal of a multivariate rational function be hypergeometric. However, that
all such diagonals are D-finite functions is a general fact. In fact, much more holds: a
theorem by Lipshitz~\cite{Lipshitz88} states that if $F(x_1,\ldots,x_n)$ is a multivariate
D-finite function\footnote{This means that $F$ satisfies a system of $n$ linear partial differential equations, the $i$-th one being an ordinary linear differential equation with respect to $\frac{\partial}{\partial x_i}$ and with polynomial coefficients in $x_1, \ldots, x_n$.}, then $\Diag(F)$ is D-finite.

The particular case where $F$ is rational is already interesting and nontrivial to prove.
In this case, the argument is the following.
First, as in the bivariate
case, if $F = P/Q \in \Q(x_1,\ldots,x_n)\cap \Q[[x_1,\ldots,x_n]]$, then the residue
theorem allows to write (for some $\epsilon>0$)
\begin{equation*}\label{eq:Deligne}
	\Diag(F)(t) = \frac{1}{(2\pi i)^{n-1}} \bigointsss_{|x_1| = \cdots = |x_{n-1}| = \epsilon} 
F \left( x_1, \ldots, x_{n-1}, \frac{t}{x_1\cdots x_{n-1}}\right)\frac{dx_1 \cdots dx_{n-1}}{x_1\cdots x_{n-1}},
\end{equation*}
so that $\Diag(F)(t)$ is the period function of a (family of) rational
functions. Its D-finiteness is then a consequence of the
finite-dimensionality over $\mathbb{C}(t)$ of the de
Rham cohomology for the complement of the variety in
$\mathbb{A}^n_{\mathbb{C}(t)}$ defined by the equations
$Q(x_1,\ldots,x_n)=0$ and $x_1\cdots x_n=t$.
(This
finiteness proof usually relies on a geometric argument in the smooth case, and 
on Hironaka's resolution of singularities in the general case.)
In more down-to-earth terms this proof guarantees, in a non-effective way,
that repeated differentiation under the integral sign eventually produces a
finite sequence of rational integrands that admit a linear combination with
coefficients in $\Q(t)$ that becomes an exact differential.

If $f(t)$ is the diagonal of a rational function, then not only is $f(t)$ D-finite, but in
addition $f(t)$ is \emph{globally bounded}, that is, $f(t)$ has a nonzero radius of
convergence in $\C$ and $\beta \cdot f (\alpha \cdot t) \in \Z [[t]]$ for some $\alpha,
\beta\in\Z\setminus \{ 0 \}$. (Note that this second property is equivalent to the existence of an $N\in\N\setminus \{ 0 \}$ such that $f(Nt) - f(0) \in \Z[[t]]$, as in
\cref{prop:eisenstein}.)
The following beautiful conjecture predicts that the converse is also true; it was formulated by Christol in the late 1980s, see e.g. \cite{Christol86} and~\cite[Conjecture~4]{Christol90}:

\begin{conj}[Christol's conjecture] \label{conj:Christol}
For $f(t) \in \Q[[t]]$, the following properties are equivalent:
\begin{enumerate}[label=(\arabic{enumi}),topsep=\parsep,itemsep=\parsep,parsep=0pt]
 \item $f(t)=\Diag(F)$ for some $F \in \Q(x_1,\ldots,x_n)\cap \Q[[x_1,\ldots,x_n]]$;
 \item $f\in \mathbb{Q}[[t]]$ is D-finite and globally bounded.
\end{enumerate}
\end{conj}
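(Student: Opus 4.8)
The statement is a conjecture that is still open, so what follows is a programme rather than a proof. The implication $(1)\Rightarrow(2)$ is, by contrast, a theorem, and the argument has been sketched above: the residue representation exhibits $\Diag(F)$ as a period of a family of rational functions, whence D-finiteness; the radius of convergence is positive because the multivariate Taylor coefficients of a rational function grow at most geometrically, hence so does the diagonal sequence; and the integrality $f(Nt)-f(0)\in\Z[[t]]$ is obtained by writing $F=P/Q$ with $P,Q\in\Z[x_1,\dots,x_n]$ and $Q(0)=1$, and tracking denominators through the geometric-series expansion of $1/Q$. The heart of the matter is therefore $(2)\Rightarrow(1)$: every globally bounded D-finite series is a diagonal.

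Here is a plan for $(2)\Rightarrow(1)$. A globally bounded D-finite power series is in particular a \emph{$G$-function} in the sense of Siegel and André. The crucial structural input is then the André-Chudnovsky-Katz theorem: the minimal monic operator $\op L$ annihilating a $G$-function is a \emph{$G$-operator}; in particular it is \emph{globally nilpotent} --- for all but finitely many $p$, the reduction of $\op L$ modulo $p$ has nilpotent $p$-curvature, a strong and highly structured refinement of the congruences appearing in \cref{prop:cartier} --- and it has only regular singularities with rational exponents. The next step is to invoke (a suitable refinement of) the \emph{Bombieri-Dwork conjecture}: every $G$-operator ``comes from geometry'', i.e.\ is a subquotient of a Gauss-Manin connection attached to a family of algebraic varieties. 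Granting this, $f$ is, up to the usual operations, a period of such a family, and one is reduced to the final --- and genuinely delicate --- step: showing that a period function which happens to be \emph{globally bounded} is in fact the diagonal of a rational function, by pinning down the relevant relative homology class as a ``torus'' cycle of the type underlying the residue computations above, using the integrality of $f$ in an essential way and in the spirit of work of Deligne, André and Christol comparing de Rham and Betti structures.

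What is already known supports this picture and also locates the obstruction. When $f$ is algebraic the conjecture is a theorem of Furstenberg: every algebraic power series is the diagonal of a rational function, so $(2)$ forces $(1)$ for free. Christol himself verified the conjecture for broad families of hypergeometric functions, and many further instances (low order, multiple binomial sums, specific families from lattice combinatorics and mathematical physics) have been settled since. The main obstacle to a general proof is twofold, and both halves are formidable: the geometric-origin step is the Bombieri-Dwork conjecture, a close cousin of the Grothendieck-Katz $p$-curvature conjecture of \cref{sec:Grothendieck}, and is itself wide open; and even granting geometric origin, the passage from ``period'' to ``diagonal'' is not formal, since not every period of an algebraic family is a diagonal of a rational function --- so the argument must genuinely exploit the global boundedness hypothesis to manufacture the torus cycle. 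In short, a complete proof appears to require a major advance on $p$-curvature-type conjectures.
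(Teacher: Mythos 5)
The statement you were asked to prove is an open conjecture, and the paper offers no proof of it; it only sketches, in the paragraphs preceding the statement, why $(1)\Rightarrow(2)$ holds. Your proposal correctly identifies this status. Your treatment of $(1)\Rightarrow(2)$ is essentially the paper's: D-finiteness via the residue/period representation and the finite-dimensionality of the relevant de Rham cohomology, and global boundedness via geometric growth of the Taylor coefficients of a rational function together with denominator-tracking in the expansion of $1/Q$ (one small normalization: you cannot always take $Q(0)=1$ over $\Z$, but $Q(0)$ a nonzero integer suffices, the resulting powers of $Q(0)$ in denominators being absorbed by the rescaling $\beta f(\alpha t)$).

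For $(2)\Rightarrow(1)$ your programme --- globally bounded D-finite series are $G$-functions, the André--Chudnovsky--Katz theorem makes the minimal operator a globally nilpotent $G$-operator with regular singularities and rational exponents, the Bombieri--Dwork conjecture would place it in a Gauss--Manin connection, and a final (genuinely nonformal) step would identify the relevant cycle as a torus cycle using integrality --- is a fair and accurate description of how experts view the problem, and you are right to flag both the dependence on Bombieri--Dwork and the fact that ``period'' does not formally imply ``diagonal.'' Your citation of Furstenberg for the algebraic case and of Christol's hypergeometric verifications is consistent with what the paper reports. In short: there is nothing to check your argument against, since no proof exists; what you have written is a correct account of the known half and a reasonable, clearly labelled roadmap for the open half, not a proof.
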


Christol's conjecture is far from being proved;
the following explicit problem, also due to 
Christol~\cite[p.~51]{Christol90}, is a very particular case of 
\cref{conj:Christol} and it is still open as of today.

\begin{question}  \label{question:Christol}
Is 
\begin{align*}
f(t) & = {}_{3}F_{2} \left( \left[ \frac19, \frac49, \frac59 \right], \left[1, \frac13 \right]; 3^6\, t \right) \\
& = 1+60 \, t +20475  \, t^{2}+9373650  \, t^{3}+ 4881796920  \, t^{4}+ \cdots 
\end{align*}
the diagonal of a rational power series?
\end{question}  

Therefore, even understanding diagonals which are hypergeometric functions is a very difficult problem; for some recent progress see~\cite{BoYu22}. 
 
Another natural and difficult question is whether a given diagonal of a rational function is
algebraic or transcendental. This question is directly connected to the main aim of this
article.
One may wonder whether it is possible to detect transcendence of a given diagonal $f(t) = \Diag(F)$  by reducing it modulo a prime $p$, and proving the transcendence of $f(t) \bmod p$ over $\F_p(t)$.
Unfortunately, this strategy is systematically doomed to failure: indeed, even if the diagonal $f(t)$ is transcendental, its reduction modulo $p$ is necessarily algebraic! 
This was proved by Furstenberg in~\cite[Theorem~1]{Furstenberg67}.
For instance, the transcendental diagonal in \eqref{eq:hyperdiag} is equal to 
\begin{align*}
(1+t)^{-1/4} & \; \bmod 5, \\
(1+6t+6t^2)^{-1/6} & \; \bmod 7, \\
(1+6t+2t^2+8t^3)^{-1/10} & \; \bmod 11.
\end{align*}

On the other hand, the Christol-André conjecture (\cref{conj:ChristolAndre}) implies that if
$f(t)$ is the diagonal of a rational function, then $f(t)$ is algebraic if and only if the condition~(3b) is fulfilled. 
The direct implication is true without the Christol-André conjecture; it relies on three nontrivial facts:
(i) the minimal-order LDE for a diagonal 
has only regular singularities with rational exponents~\cite{Christol88};
(ii) the classification of local solutions of Fuchsian linear differential equations~\cite[\S19]{Poole};
(iii) if $f(t)$ is algebraic, then its minimal-order LDE has only algebraic solutions~\cite[\S2]{CSTU02}.
If the condition~(3b) fails, then by (i) and (ii) at least one local solution $g(t)$ around $t=0$ of the minimal-order LDE for $f(t)$ involves logarithms, hence $g(t)$ is transcendental; then (iii)
implies that $f(t)$ must be transcendental as well.

For instance, the diagonal in \eqref{eq:hyperdiag} admits \( t \left(27 t -1\right) \partial_t^{2}+\left(54 t -1\right) \partial_t +6\) as minimal-order differential equation, with local basis
\[1+6 t +90 t^{2}+\cdots 
\quad \text{and} \quad
\log \! \left(t \right)+\left(6 \log \! \left(t \right)+15\right) t 
+\cdots .\]
Hence it is transcendental.

As a final remark, note that Theorem 1.1 in~\cite{Vargas21} implies that for
any prime $p\neq 3$, the reduction modulo~$p$ of the $_3F_2$ (transcendental)
function from~\cref{question:Christol} is algebraic (of degree at most
$p^{54}$). Hence, even if this $_3F_2$ function is not known to be a diagonal
of a rational function, its reductions modulo $p$ are known to behave as
reductions modulo $p$ of diagonals of rational functions.

\subsection{Examples from Combinatorics: Walks in the Quarter Plane}\label{ssec:walks}

In combinatorics, studying the nature of generating functions is of primary importance; for
instance, algebraicity may reveal essential (but potentially hidden) recursive structures of the
combinatorial classes under consideration.
In particular, many examples have been studied over the past decades in \emph{lattice path 
combinatorics}, a subfield of enumerative combinatorics. 
A plethora of interesting mathematical phenomena occur 
even when restricting to the
\emph{walks with small steps in the quarter plane} $\N^2$.
These are walks in the lattice $\Z^2$,
confined to the cone~${\mathbb{R}}_{+}^2$
that start at the origin $(0,0)$ and use steps in a
model (or stepset)~$\fS$ which is a fixed subset of the set of nearest-neighbor steps $ \{{\swarrow,}\ {\leftarrow,}\
{\nwarrow,}\ {\uparrow,}\ {\nearrow,}\ {\rightarrow,}\ {\searrow,}\
{\downarrow}\}.$
The systematic study of small step walks in $\N^2$ has been initiated by Bousquet-Mélou and Mishna in their germinal article~\cite{BMM10}.
An earlier reference on the topic, in a probabilistic context, is the book~\cite{FIM99}.
The study of generating functions of walks with small steps in the quarter plane now spans several decades and dozens of articles.
For instance, in~\cite{DHRS18}
the differential-algebraic nature of these generating functions
is completely elucidated 
using tools from differential Galois theory.
A survey with many references can be found in the recent article~\cite{Bostan21}.

Given a model $\fS\subseteq
\{{\swarrow,}\ {\leftarrow,}\
{\nwarrow,}\ {\uparrow,}\ {\nearrow,}\ {\rightarrow,}\ {\searrow,}\
{\downarrow}\}$, we denote by $\color{black}q_{i,j,n}$ the number of {\color{black}$\fS$-walks of
length~$n$ ending at $(i,j)$}. The \emph{full counting sequence} $(q_{i,j,n})_{(i,j,n)\in\N^3}$  admits several interesting specializations, for instance
$e_n\coloneqq q_{0,0,n}$, the number of $\fS$-walks of length~$n$
  returning to $(0,0)$ (``excursions'')
and $q_n \coloneqq  \sum_{i,j\geq0}q_{i,j,n}$, the number of 
  $\fS$-walks with prescribed length~$n$.
 
To these sequences one attaches
various functions, namely the \emph{full generating function}
\vspace{-0.2cm}
\begin{equation*}
Q_\fS(x,y,t)=\sum_{n=0}^\infty
\biggl(\sum_{i,j=0}^\infty q_{i,j,n}x^i y^j\biggr) t^n  \; \in {\mathbb Q}[x,y][[t]],
\vspace{-0.09cm}
\end{equation*}
 and its corresponding univariate specializations
 $Q_\fS(0,0,t) = \sum_{n\geq0} e_n t^n$ (``excursions generating function''),
 $Q_\fS(1,1,t) = \sum_{n\geq0} q_n t^n$ (``length generating function''),
 $Q_\fS(1,0,t)$ and $\color{black}Q_\fS(0,1,t)$ (``boundary returns'')
 and $[x^0] \, Q_\fS(x,1/x,t)$ (``diagonal returns'').
 
The general question in this setting is: given a model $\fS$, what can be said about the
generating function $Q_\fS(x,y,t)$, and its specializations? In particular, is~$Q_\fS(x,y,t)$
algebraic? Is it at least D-finite? Does $Q_\fS(x,y,t)$ (or at least some of its specializations)
admit closed-form expressions?

The model $\fS = \{\uparrow, \rightarrow\}$ in \cref{ex:Polya-Dyck} (Dyck bridges) is one of the simplest possible models. In that case, the generating function $Q_\fS(x,y,t)$ is algebraic. This is actually a particular case of a classical result stating that whenever $\fS$ is included in 
$ \{
\ {\uparrow,}\ {\nearrow,}\ {\rightarrow,}\ {\searrow,}\
{\downarrow}\}$
or in  
$\{
{\leftarrow,}\
{\nwarrow,}\ {\uparrow,}\ {\nearrow,}\ {\rightarrow}
\}$, 
that is, if the walks are essentially 1-dimensional,
then $Q_\fS(x,y,t)$ is algebraic.

For this reason, most studies in the area focus on the truly 2-dimensional cases, that is on
models~$\fS$ that contain both a step oriented towards the horizontal and the vertical axes
(see Figure 1 in~\cite{Bostan21}). A full classification is now available, according to the
algebro-differential properties of $Q_\fS(x,y,t)$, but here we restrict to two examples.

\subsubsection{Trident walks}
Up to some canonical reductions, there are exactly $23$ models of walks with small steps in the quarter plane whose full generating function $Q(x,y,t)$ is D-finite (with respect to $x, y$ and $t$).
They are displayed in Figure 4 in~\cite{Bostan21}.

One of these models is that of the 
``trident walks'', with $\fS = \{ {\nwarrow,}\
{\uparrow,}\ {\nearrow,}\ {\downarrow}\}$. This is entry~$7$ in~\cite[Fig.~4]{Bostan21} and
in~\cite[Table~6]{BoChHoKaPe17}.
There exist 
$1$ trident walk of length $0$ (the empty walk), 
$2$ trident walks of length $1$ ($\{ \uparrow \}, \{ \nearrow \}$)
and
$7$ trident walks of length $2$ 
($
\{ \uparrow - \uparrow\}, 
\{ \uparrow - \downarrow\}, 
\{ \uparrow - \nearrow \},
\{ \nearrow - \nwarrow\},
\{ \nearrow - \uparrow\},
\{ \nearrow - \nearrow\},
\{ \nearrow - \downarrow\}
$).
It was proved in~\cite{BoChHoKaPe17} that the length generating function of trident walks
	\[ Q(t) = \sum_{n \geq 0} q_n t^n = 1+2 \, t+7 \, t^2+23 \, t^3+84 \, t^4+301 \, t^5+1127 \, t^6+\cdots\]      
is D-finite and satisfies $L_5(Q(t)) = 0$, where $L_5$ is the following differential equation of order $5$ with polynomial coefficients of degree at most $15$:

\begin{small}
\begin{flalign*}
t^{2} \left(t -1\right) \left(4 t -1\right) \left(12 t^{2}-1\right) \left(5184 t^{7}-4128 t^{6}+4416 t^{5}+400 t^{4}+252 t^{3}-90 t^{2}-42 t +3\right) \left(4 t^{2}+1\right) \partial_t^{5}+  & \\
t \left(21399552 t^{13}-38486016 t^{12}+43416576 t^{11}-25803264 t^{10}+7762176 t^{9}-3848960 t^{8}+ \right. & \\ \left. 337088 t^{7}+143168 t^{6}-7128 t^{5}+   45328 t^{4}-11304 t^{3}-1854 t^{2}+540 t -27\right) \partial_t^{4}+ & \\
\left(143327232 t^{13}-222621696 t^{12}+257753088 t^{11}-122575104 t^{10}+36213888 t^{9}-19897728 t^{8}+ \right. & \\ \left. 1942656 t^{7}+70768 t^{6}-100456 t^{5}+ 254712 t^{4}-35124 t^{3}-7404 t^{2}+1116 t -48\right) \partial_t^{3}+ & \\
\left(346374144 t^{12}-454643712 t^{11}+545398272 t^{10}-166067712 t^{9}+59053824 t^{8}-32668800 t^{7}+ \right. & \\ \left.  2167392 t^{6}+ 54912 t^{5}-687744 t^{4}+500616 t^{3}-31176 t^{2}-5004 t +288\right) \partial_t^{2}+ & \\
\left(262766592 t^{11}-284000256 t^{10}+358041600 t^{9}-21846528 t^{8}+ 33115392 t^{7}- 13748736 t^{6}- \right. & \\ \left.   1184640 t^{5}+ 651744 t^{4}-894672 t^{3}+278496 t^{2}-7272 t +2880\right) \partial_t + & \\35831808 t^{10}-31186944 t^{9}+42163200 t^{8}+11639808 t^{7}+4981248 t^{6}-  & \\ 981504 t^{5}-809280 t^{4}+72576 t^{3}-177408 t^{2}+8064 t -3168.
\end{flalign*}
\end{small}

The length generating function $Q(t)$ is completely and uniquely defined by $L_5$ and by the first coefficients $[t^k]Q(t)$ for $k=0,\ldots, 14$.
The question is: how to determine the algebraic or transcendental nature of $Q(t)$?

Note that the asymptotic estimate 
$q_n \sim \gamma \beta^n n^r$ 
in~\cite[Fig.~4]{Bostan21},
with
$\gamma = 4/(3\sqrt\pi)$, $\beta=4$, $r=-1/2$,
is compatible with algebraicity,
since 
$r \in \Q\setminus\Z_{<0}$, $\beta \in \Qbar$ and
$\gamma \Gamma(r+1) = 4/3\in \Qbar$. 
Hence we cannot conclude the transcendence using asymptotic arguments as in \cref{ex:diag3}.

An interesting feature is that $L_5$ admits a factorization $L_2 \cdot L_{1,a} \cdot L_{1,b} \cdot L_{1,c}$, where the three operators $L_{1,\star}$ have order 1, and $L_2$ has order~$2$. This type of factorization $2/1/\cdots/1$ actually holds in all the cases 1--19 in~\cite[Fig.~4]{Bostan21}.
On the other hand, the algorithmic method (a variant of the \emph{creative telescoping} mentioned in~\cref{sec:perimeter}) that produces the differential equation $L_5$ does not guarantee that it is the least-order one satisfied by $Q(t)$. 
In~\cite{BoChHoKaPe17}, the above factorization was algorithmically computed and exploited
(together with some other computer algebra algorithms) in order to produce the following
expression for $Q(t)$ (and similar expressions for all cases 1--19 in~\cite[Fig.~4]{Bostan21}):
\begin{multline*}
Q(t) =
\frac1{t(t-1)} \int_0^t
  \frac{u}{(1-4u)^{3/2}} \left\{4+\int_0^u
    \frac{(1-4v)^{1/2}(\frac12+v)}{v^2} \left[1+\frac1{2v(1+2v)(1+4v^2)^{1/2}} \times {} \right. \right. \\
\hspace{-0.3cm}    \left. \left. \left((1-v) \, 
	_2 F_1 \left(\left[\frac{1}{2},\frac{3}{2}\right] \!\!, [1]; \frac{16 t^{2}}{4 t^{2}+1}\right)
     -(1+v)(1-4v+8v^2) \, 
	_2 F_1 \left(\left[\frac{1}{2},\frac{1}{2} \right]\!\!, [1]; \frac{16 t^{2}}{4 t^{2}+1}\right)
	 \right)
  \right]
  \, dv \right\}
\, du .
\end{multline*}
From such an expression, it is not difficult to prove that $Q(t)$ is transcendental, using the fact that the ${}_2 F_1$ occurring in the complete elliptic integral of the first kind,
${}_2 F_1 ([1/2, 1/2], [1]; t)$, is transcendental.
In~\cite[\S4.2]{BoChHoKaPe17} it was shown that $Q(t)$ is transcendental by exploiting the
explicit factorization of~$L_5$, and by applying to $L_2$ a specific algorithm that decides algebraicity of solutions for operators of order 2, namely Kovacic's algorithm~\cite{Kovacic86}.
The same approach uniformly works on the models 1--19 and allows to prove the transcendence of the corresponding $Q(t)$ except in case~17 for the model $\fS = \{ \uparrow, \leftarrow, \searrow \}$ and 
in case~18 for the model  $\fS = \{ \uparrow, \leftarrow, \searrow, \downarrow, \rightarrow, \nwarrow \}$
(in these two cases the algorithm proves algebraicity of all solutions of the corresponding $L_2$).
Very interestingly, the full generating function $Q(x,y,t)$ is proved to be transcendental in all cases 1--19, and cases 17 and 18 are the only ones with algebraic specialization $Q(1,1,t)$.

An alternative proof of the transcendence of the length generating function $Q(t)$ for trident
walks is based on the fact that $L_5$ is indeed the minimal-order differential equation for $Q(t)$; this implies
that if $Q(t)$ were algebraic, then $L_5$ would only have algebraic solutions, a
situation discarded by exhibiting logarithms in the local solutions of $L_5$ at $t=0$. This
approach to the proof of transcendence of solutions of linear differential equations is used in~\cite{BoSaSi}; its heart is the ``minimization'' algorithm from~\cite{BoRiSa22}.

\subsubsection{Gessel walks}

The most difficult model of small-step walks in the quarter plane is Gessel's model,
for which $\fS = {\{\nearrow,\swarrow,
\leftarrow,\rightarrow \}}$.
For notational simplicity we will write 
$G(x,y,t)$ for 
the full generating function in this case, 
and  $G(t)$ for the length generating function for Gessel walks $G(1,1,t)$.

Around 2000, Ira Gessel formulated
two conjectures equivalent to the following statements:
\begin{conj}\label{conj:Gessel1}
The generating function $G(0,0,t) = 1+2t^2+11t^4+85t^6 + 782t^7+\cdots$ of Gessel excursions is equal to
\(
{_3F_2} \left([5/6, 1/2, 1], [5/3, 2]; 16t^2 \right)
.
\)
\end{conj}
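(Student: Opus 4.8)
The plan is to prove the identity by a computer-assisted ``guess-and-prove'' strategy, in the spirit of the creative-telescoping arguments used earlier in the text. Write $S(x,y)=xy+\tfrac1{xy}+\tfrac1x+x$ for the step inventory of Gessel's model $\fS=\{\nearrow,\swarrow,\leftarrow,\rightarrow\}$; the walks confined to $\N^2$ obey the standard kernel functional equation
\begin{equation*}
\bigl(1-tS(x,y)\bigr)\,xy\,G(x,y,t) = xy - t\cdot(\text{boundary series supported on } x=0 \text{ and } y=0),
\end{equation*}
whose boundary unknowns record the walks touching the two axes and the corner. The group of the model is the dihedral group of order $8$, generated by $(x,y)\mapsto(\tfrac1{xy},y)$ and $(x,y)\mapsto(x,\tfrac1{x^2y})$; what singles Gessel's model out as the hard case is that the signed orbit sum of $xy$ under this group vanishes identically, so the classical ``orbit-summation'' form of the kernel method does not produce a closed form directly. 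Nevertheless, by suitable reflections and coefficient extractions in $x$ and $y$, one can obtain an explicit — though unwieldy — representation of the excursion numbers $e_n=q_{0,0,n}$ as a multiple binomial sum, equivalently as the constant term of an explicit rational function; in particular this lets one compute hundreds of terms of $G(0,0,t)=\sum_{n\ge0}e_n t^n$.

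From those terms I would \emph{guess}: a Hermite--Padé computation (a linear-algebra search with undetermined coefficients) yields a candidate recurrence with polynomial coefficients — equivalently a candidate linear ODE — for the sequence $(e_n)$. One then checks that the proposed series $F(t):={}_3F_2([5/6,1/2,1],[5/3,2];16t^2)$ — whose coefficients satisfy a first-order recurrence by the hypergeometric property recalled in \S\ref{sec:hypergeom}, and which therefore satisfies an explicit linear ODE — is annihilated by the same operator and agrees on the first few coefficients. At this stage the identity $G(0,0,t)=F(t)$ is overwhelmingly supported but \emph{not} yet proved: the guessed recurrence is certified only on the finitely many terms used.

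The decisive step, and the genuine obstacle, is to upgrade the guess to a theorem, and this is where creative telescoping does the work. Applied to the explicit constant-term representation of $e_n$ obtained in the first step, a certified telescoping algorithm (of Almkvist--Zeilberger / Chyzak type) returns a recurrence for $e_n$ \emph{together with} a rational-function certificate whose correctness reduces to a single polynomial identity that can be checked mechanically. Comparing this now-proved recurrence with the one satisfied by $F(t)$ and verifying finitely many initial values then yields $G(0,0,t)=F(t)$. The difficulty is concentrated precisely here: because the orbit sum vanishes, manufacturing a representation of $e_n$ to which telescoping applies cleanly is far more delicate than for the other D-finite quarter-plane models, and the resulting telescoper and certificate are large enough that the proof is genuinely a computer proof.

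As a variant, one may short-circuit part of this: once one invokes the (likewise computer-assisted) fact that the full series $G(x,y,t)$ is algebraic over $\Q(x,y,t)$, the specialization $G(0,0,t)$ is algebraic; and since $F(t)$ is itself algebraic — its underlying ${}_2F_1$ lies on Schwarz's list — one can then identify the two functions by matching minimal polynomials and sufficiently many Taylor coefficients. This reorganizes the argument but does not remove its guess-and-prove, computer-assisted core.
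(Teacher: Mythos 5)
First, note that the paper does not prove \cref{conj:Gessel1}: it records it as Gessel's (historical) conjecture and attributes its resolution to Kauers, Koutschan and Zeilberger~\cite{KaKoZe09}, so your proposal can only be judged on its own merits. Judged that way, it has a genuine gap exactly at the step you yourself call ``decisive''. Your plan is to certify a recurrence for the excursion numbers $e_n$ by running creative telescoping on ``an explicit constant-term representation of $e_n$'' extracted from the kernel equation. But you correctly observe two sentences earlier that the signed orbit sum of $xy$ under the dihedral group of Gessel's model vanishes identically; the consequence of that vanishing is precisely that the reflection/orbit-sum machinery does \emph{not} deliver $G(x,y,t)$ (nor $G(0,0,t)$) as the positive part or constant term of an explicit rational or algebraic function, which is what it does deliver for the $19$ non-algebraic D-finite models. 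The phrase ``nevertheless, by suitable reflections and coefficient extractions one can obtain an explicit multiple binomial sum for $e_n$'' is an assertion, not a construction, and no such elementary representation was (or is) available by these means; producing one is essentially equivalent to the theorem you are trying to prove. (One could invoke Furstenberg to write the algebraic series $G(0,0,t)$ as a diagonal, hence a constant term, but that presupposes algebraicity, i.e.\ \cref{theo:GesselAlg}.) So the telescoping step has no input to act on, and the proof does not close.

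It is also worth recording how the actual proof in~\cite{KaKoZe09} sidesteps this: it never manufactures an integral or constant-term representation. Instead it guesses a system of recurrences with polynomial coefficients for the full trivariate array $q_{i,j,n}$ (a ``quasi-holonomic'' ansatz whose leading coefficients are free of $i$ and $j$), certifies the guess by an exact, purely algebraic verification that the combinatorial transfer recurrence defining $q_{i,j,n}$ is compatible with the guessed ideal, and only then specializes to $i=j=0$ and compares with the first-order hypergeometric recurrence of ${}_3F_2([5/6,1/2,1],[5/3,2];16t^2)$. Your closing ``variant'' --- deduce algebraicity of $G(0,0,t)$ from \cref{theo:GesselAlg} and match minimal polynomials --- is logically sound but inverts the historical and logical order in the paper: the algebraicity of $G(x,y,t)$ is the later and strictly harder result of~\cite{BoKa10}, and its proof already yields \cref{conj:Gessel1} as a byproduct, so this route proves the conjecture only by assuming something stronger.
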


\begin{conj}\label{conj:Gessel2}
The  generating function $G(x,y,t)$ is not D-finite.
\end{conj}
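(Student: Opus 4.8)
This is one of Ira Gessel's two conjectures, and it is worth saying at the outset that, in sharp contrast with \cref{conj:Gessel1}, this statement turns out to be \emph{false}; so what follows is a description of the natural line of attack together with the obstacle that defeats it. The plan one would follow is the one used for non-D-finiteness elsewhere in this area. To the step set $\fS=\{\nearrow,\swarrow,\leftarrow,\rightarrow\}$ one attaches a group of birational transformations of the $(x,y)$-plane, generated by the two involutions preserving the kernel $1-t\sum_{(i,j)\in\fS}x^iy^j$; for Gessel's model this group is finite, of order $8$. For the $56$ models with \emph{infinite} group one proves non-D-finiteness by an analytic/asymptotic obstruction --- typically the length or excursion sequence has an estimate $q_n\sim\gamma\beta^n n^r$ with $\beta\notin\Qbar$ or $r\notin\Q$, which is incompatible with D-finiteness by the structure of the singularities of D-finite functions (this is the strategy carried out by Kurkova--Raschel and others, and in the simplest cases it is essentially the argument already used in \cref{ex:diag3} and \S\ref{sec:perimeter}).

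For a \emph{finite}-group model one would instead try the ``orbit-sum'' strengthening of the kernel method: summing the functional equation for $G(x,y,t)$ over the group with alternating signs should exhibit $xy\,G(x,y,t)$ as the positive part, in $x$ and $y$, of an explicit rational function, from which the algebro-differential nature of $G$ can be read off. The feature that made \cref{conj:Gessel2} plausible is that for Gessel's model this orbit sum \emph{vanishes identically} --- it is one of only four finite-group models (together with the three Kreweras-type models) for which the standard machinery yields nothing --- and, unlike the Kreweras models, no algebraic equation of modest size turned up in early computer experiments. A reasonable completion of the plan would then be: compute many coefficients of $G(0,0,t)$, $G(1,1,t)$ and $G(x,y,t)$; run Hermite--Pad\'e ``guessing'' to look for a linear recurrence or an algebraic equation of small order and degree; and, failing that, certify non-D-finiteness through an asymptotic obstruction as above.

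The main obstacle, and the reason the plan cannot work, is that none of those certifications exists. For Gessel's model the growth of the length and excursion sequences is again of the form $\gamma\beta^n n^r$ with $r\in\Q$ and with $\beta$ and $\gamma\,\Gamma(r+1)$ algebraic --- exactly the pattern exhibited for trident walks in \S\ref{ssec:walks} --- so it is \emph{compatible} with algebraicity, and no coefficient-growth argument can ever prove the conjecture. In fact the guessing step succeeds: Kauers, Koutschan and Zeilberger found and computer-proved a holonomic equation for $G(0,0,t)$ (which settles \cref{conj:Gessel1} affirmatively), and Bostan and Kauers then guessed, and rigorously certified, a polynomial equation $P(x,y,t,G(x,y,t))=0$ --- of degree $72$ in the last variable --- so that the \emph{full} generating function $G(x,y,t)$ is algebraic over $\Q(x,y,t)$. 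By Abel's theorem (recalled in \cref{sec:intro}) an algebraic function is D-finite; hence $G(x,y,t)$ is D-finite and \cref{conj:Gessel2} is refuted. The moral, very much in the spirit of this article, is that here all the available arithmetic and growth data were consistent with algebraicity, so the question could only be decided by exhibiting the algebraic equation itself (equivalently, by a Cartier/Grothendieck-type analysis of the minimal operator) --- and the verdict went against Gessel's guess.
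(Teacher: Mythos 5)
You correctly recognize that this statement is a conjecture that turned out to be \emph{false}, and your resolution --- that $G(x,y,t)$ is algebraic by the Bostan--Kauers theorem and hence D-finite by Abel's result --- is exactly the paper's own treatment (see \cref{theo:GesselAlg} and the surrounding discussion). The additional background you supply on the group of the walk, orbit sums, and the guess-and-prove strategy is accurate and consistent with what the paper reports, so there is nothing to correct.
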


Gessel's first conjecture was first solved in 2009 by Kauers, Koutschan and Zeilberger
in~\cite{KaKoZe09} using a computerized guess-and-prove approach.
Unfortunately, solving \cref{conj:Gessel1} had no implication concerning \cref{conj:Gessel2},
and in particular on the D-finiteness of the length generating function $G(t)$. It came as a total surprise when Bostan and Kauers~\cite{BoKa10} proved that Gessel's second conjecture was false.

\begin{theo}[\cite{BoKa10}]\label{theo:GesselAlg}
The generating function $G(x,y,t)$ for Gessel walks is algebraic.

Moreover, the coefficients $(g_n)$ of $G(t) = 1+2 t +7 t^{2}+21 t^{3}+78 t^{4}+260 t^{5}+ \cdots$ satisfy
$(3n+1) \, g_{2n} = (12n+2) \, g_{2n-1}$ 
and 
$(n+1) \, g_{2n+1} =  (4n+2) \, g_{2n}
$
for all $n\geq 0$.

In addition, $G(t) = (H(t)-1)/(2t)$ where
$H(t) = 1+2 t +4 t^{2}+14 t^{3}+42 t^{4}+ \cdots$
is a root of 
$27 \left(4 t -1\right)^{2} H(t)^{8}-18 \left(4 t -1\right)^{2} H(t)^{4}-8 \left(16 t^{2}+24 t +1\right) H(t)^{2}-\left(4 t -1\right)^{2}=0$, and
\[
H(t) = {{}_{2}^{}{{}{{F_{1}^{}}}}\! \left(\left[-\frac{1}{12}, \frac{1}{4} \right], \left[\frac{2}{3}\right];-\frac{64 t \left(4 t +1\right)^{2}}{\left(4 t -1\right)^{4}}\right)}.
\]
\end{theo}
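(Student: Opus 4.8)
The plan is to establish algebraicity of $G(x,y,t)$ by the original guess-and-prove route of Bostan–Kauers, and then to deduce the explicit minimal polynomial for $H(t)$ and the hypergeometric closed form for $G(t)$ by a mixture of series manipulation and algebraic elimination. First I would recall the functional equation satisfied by $G(x,y,t)$ coming from the step-by-step construction of Gessel walks: writing $S(x,y)=xy+\frac1{xy}+x+\frac1x$ for the step polynomial, one gets the kernel equation
\[
\bigl(1-tS(x,y)\bigr)\,xy\,G(x,y,t) \;=\; xy \;-\; t\,K_1(x,t)\,G(x,0,t)\;-\;t\,K_2(y,t)\,G(0,y,t)\;+\;t\,c(t)\,G(0,0,t),
\]
for suitable polynomial kernels $K_1,K_2$ and constant $c(t)$; the precise shape is standard and only the existence of such an equation is needed. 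From this one computes a large number of Taylor coefficients of $G(x,y,t)$ in $t$ — this is purely mechanical — and feeds them to a Hermite–Padé / guessing routine to produce a candidate bivariate polynomial $P(x,y,t,Z)$ with $P\bigl(x,y,t,G(x,y,t)\bigr)=0$ up to the order to which the series are known.

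The second, and genuinely substantive, step is to \emph{certify} the guessed polynomial. I would do this by a rigorous argument: show that the algebraic function $\widetilde G$ defined by $P(x,y,t,\widetilde G)=0$ (with the branch normalized by $\widetilde G(0,0,0)=0$ via Newton's polygon) actually satisfies the kernel functional equation above. Plugging $\widetilde G$ into the functional equation and using the relations that $P$ encodes reduces the identity to be checked to the vanishing of a single explicit element of the function field $\overline{\Q(x,y,t)}[Z]/(P)$, which can be verified by a finite resultant/Gröbner-basis computation. Since the functional equation has a unique power-series solution in $\Q[x,y][[t]]$, this forces $G(x,y,t)=\widetilde G$, hence $G(x,y,t)$ is algebraic. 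The main obstacle is precisely here: making the certification unconditional rather than merely "agreeing to high order" — one must either exhibit the full algebraic system linking $G(x,0,t)$, $G(0,y,t)$, $G(0,0,t)$ and $G(x,y,t)$ and check closure of that system, or invoke an a priori bound (from the geometry of the kernel curve, which has genus $0$ here) on the degree of the minimal polynomial so that agreement to sufficiently high order becomes a proof.

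Finally, to extract the stated normal forms I would specialize. Setting $x=y=1$ in $P$ and eliminating via a resultant gives a polynomial annihilating $G(t)=G(1,1,t)$; substituting $G(t)=(H(t)-1)/(2t)$ and clearing denominators produces, after factoring out spurious components, the octic
\[
27(4t-1)^2H^8-18(4t-1)^2H^4-8(16t^2+24t+1)H^2-(4t-1)^2=0,
\]
which one checks is irreducible over $\Q(t)$ and has $H(t)=1+2t+4t^2+\cdots$ among its roots (the correct root being pinned down by its first few Taylor coefficients). The two-term recurrences for $(g_n)$ follow by noting that $G(t)$ then satisfies a second-order linear ODE obtained from the octic by the Abel-type argument of \S\ref{sec:intro}, whose indicial data at $t=0$ and explicit coefficients translate into the contiguity relations $(3n+1)g_{2n}=(12n+2)g_{2n-1}$ and $(n+1)g_{2n+1}=(4n+2)g_{2n}$; equivalently one recognizes these as the recurrence of a $_2F_1$ with a quadratic substitution. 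Matching that $_2F_1$ — including the argument $-64t(4t+1)^2/(4t-1)^4$ and parameters $[-1/12,1/4],[2/3]$ — against the solved recurrence, and checking agreement of the constant terms, yields the claimed closed form for $G(t)$. These last steps are routine computer-algebra verifications once the algebraicity of $G(x,y,t)$ is in hand.
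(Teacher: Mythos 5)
The paper imports this theorem from~\cite{BoKa10} without giving a proof, describing only the original guess-and-prove strategy (Hermite--Padé guessing of algebraic equations for $G(x,0,t)$ and $G(0,y,t)$, certification by large multivariate resultants against the kernel functional equation, then specialization and elimination for the closed forms); your sketch reconstructs exactly that strategy and correctly identifies the certification step as the one carrying the real mathematical weight. The only inaccuracy is your parenthetical claim that the kernel curve has genus~$0$: for Gessel's model the biquadratic kernel $xy-t(x^2y^2+x^2y+y+1)$ has a degree-$4$ discriminant in $x$ with four distinct roots for generic $t$, so the kernel curve is generically of genus~$1$ and an a priori degree bound cannot be obtained that way --- but this does not affect your main route via closure of the algebraic system and uniqueness of the power-series solution.
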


The original discovery and proof of Theorem~\ref{theo:GesselAlg} was
computer-driven, and used a guess-and-prove approach, based on
\emph{Hermite-Pad\'e approximants}. 
As a byproduct of this proof, 
the minimal polynomial of $G(x,y,t)$ 
has been estimated to have more than $10^{11}$ terms when
written in expanded form, for a total size of $\approx 30$\,Gb (!).
The guess-and-prove method is a 3-step process: (i) compute~$G(x,y,t)$ to precision $t^{1200}$ ($\approx 1.5$ billion coefficients!); (ii) conjecture polynomial equations for $G(x,0,t)$ and $G(0,y,t)$; (iii) conclude the proof by computing multivariate resultants of (very big) polynomials (30 pages each).

As a matter of fact, the discovery of algebraicity was initially performed in a different way: 
the expansion of the generating function $G(x,y,t)$ modulo $t^{1000}$ was 
sufficient to guess (by using \emph{differential} {Hermite-Pad\'e
approximants}) two differential operators 
\begin{itemize}
\item  $\op{L}_{x,0} \in \Q(x,t)\langle \partial_t \rangle$, of order 11 in $\partial_t$, bidegree $(96,78)$ in $(t,x)$, 
and integer coefficients of at most 61 digits
\item  $\op{L}_{0,y} \in \Q(y,t)\langle \partial_t \rangle$, of order 11 in $\partial_t$, bidegree $(68,28)$ in $(t,y)$,
and integer coefficients of at most 51 digits
\end{itemize}
such that $\; \op{L}_{x,0}({G(x,0,t)}) = 0 \bmod t^{1000}$
and $\op{L}_{0,y}({G(0,y,t)}) = 0 \bmod t^{1000}$.    

After this guessing step of plausible
differential equations for $Q(x,0,t)$ and for $Q(0,y,t)$, an important step in the discovery process 
was to apply \cref{conj:Grothendieck1} with several primes~$p$.
More precisely, for randomly chosen
prime numbers~$p$, and $a,b\in\F_p$, both $\op {L}_{a,0}$ and
$\op {L}_{0,b}$ right-divide the pure power $\partial_t^{p}$ in
$\F_p(x)\langle \partial_t\rangle$; in other terms, 
they have zero $p$-curvature for all the tested primes~$p$ (see 
\cref{def:pcurv-general}). This was the key 
observation in the discovery~\cite{BoKa10} that the trivariate generating 
function for Gessel walks is algebraic.

Several \emph{human proofs} of~\cref{conj:Gessel1} and \cref{theo:GesselAlg} have been
discovered since the publication of~\cite{BoKa10}: the first one used complex
analysis~\cite{BoKuRa17}, the second one is purely algebraic~\cite{Bousquet16}, 
the third one is both combinatorial and analytic~\cite{Budd20},
while the more recent one is probably the most elementary~\cite{BeBoRa21}.

\subsection{Examples from Number Theory}

In his ICM 2018 paper~\cite[p.~768]{Zagier18}, Zagier introduced a recurrent sequence of rational numbers $(c_n)_{n\geq 0}$ defined by 
initial terms
$c_0 = 1, c_1 = -161/(2^{10}\cdot 3^{5})$ and $c_2 = 26605753/(2^{23} \cdot 3^{12} \cdot 5^2)$,
and by the following linear recursion with polynomial coefficients:
\begin{align*}                          
c_{n-3}     
+ 20\left(4500 n^{2}-18900 n+19739\right) c_{n-2} 
+
80352000 n(5 n-1)(5 n-2)(5 n-4) c_{n}  \\ 
\hspace{-1cm}
+ 25\left(2592000 n^{4}-16588800 n^{3}+39118320 n^{2}-39189168 n+14092603\right) c_{n-1}  
 = 0.
\end{align*}
This mysteriously looking sequences arises from a topological differential equation in the
work of Bertola, Dubrovin and Yang~\cite{BeDuYa18}, each coefficient
$c_n$ being defined by an integral over the Deligne-Mumford moduli 
spaces of stable curves of genus $g$ with $n$ marked points.
It is not difficult to prove that $c_n$ behaves asymptotically like
$1/n!^2$. Inspired by an analogy with the behavior of the 
\emph{quantum periods} in mirror symmetry, Zagier asked whether it is
possible that the $c_n$'s become integers after multiplication by
$n!^2$, or more generally by the product of two rising factorials. 
Zagier mentions the following highly nontrivial two results:
\begin{itemize}
    \item {[Yang \& Zagier]}: $a_n \coloneqq (2^{10} \cdot 3^5 \cdot  5^4)^n \cdot        (\textcolor{orange}{3/5})_n \cdot  (\textcolor{orange}{4/5})_n \cdot c_n \in \Z$ for all integers $n \geq 0$;
    \item {[Dubrovin \& Yang]}: $b_n \coloneqq (2^{12}\cdot 3^{5} \cdot  5^4)^n  \cdot       (\textcolor{magenta}{2/5})_n \cdot (\textcolor{magenta}{9/10})_n \cdot c_n \in \Z$ for all integers $n \geq 0$.
\end{itemize}

Both sequences are integer sequences of exponential growth, and hence
can be expected to have a generating series that is (the Taylor expansion at $0$ of) a period function in the sense of algebraic geometry.
For the sequence $(b_n)$, Zagier mentions that its generating function
is even algebraic, but that the sequence $(a_n)$ seems to be more
challenging. Yurkevich~\cite[Theorem~2]{Yurkevich22} proved that the
generating function of the sequence $(a_n)$ is also algebraic. 
(See~\cite[Thm.~2]{DuYaZa23} for a very closely related result.)
Inspired
by these results, it is natural to look at the following question.

\begin{question}
	Find $(u,v)\in \Q$ such that 
	$\widetilde{c}_n \coloneqq w^n \cdot (u)_n \cdot  (v)_n \cdot  c_n\in\Z$ for all $n\geq 0$, for some $w\in\Z$.
\end{question}

A natural related question is the following.

\begin{question}
Is it true that for these $(u,v)\in \Q$, the generating function $\sum_{n \geq 0} \widetilde{c}_n x^n$ is algebraic?
\end{question}

In a work (in progress) by Bostan, Weil and Yurkevich, the following result is conjectured:

\begin{conj}\label{conj:BoWeYu}
	The only pairs $(u,v) \in \mathbb{Q}^2 \cap (0,1)^2$ such that 
there exists $w\in\Z$	such that $\widetilde{c}_n \coloneqq w^n \cdot (u)_n \cdot  (v)_n \cdot  c_n\in\Z$ for all $n\geq 0$ are the ones listed in Fig.~\ref{fig:pairs}. Moreover, for each of these pairs, the corresponding generating function $\sum_{n\geq 0} \widetilde{c}_n x^n$
is algebraic, of algebraicity degree as in Fig.~\ref{fig:pairs}.
\end{conj}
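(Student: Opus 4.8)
Since $(c_n)_{n\geq 0}$ satisfies a linear recurrence of order~$3$ with polynomial coefficients, it is \emph{P-recursive}; multiplying a P-recursive sequence by the hypergeometric term $n\mapsto w^n(u)_n(v)_n$ preserves this property (and, for $(u,v)$ outside a thin exceptional set, the order), so $f_{u,v}(x)\coloneqq\sum_{n\geq 0}\widetilde c_n x^n$ is D-finite and is annihilated by an explicitly computable operator $\op L_{u,v}\in\Q(x)\langle\partial_x\rangle$ of order at most~$3$. The statement then splits into two parts: \emph{(a)} the set of admissible pairs $(u,v)\in\Q^2\cap(0,1)^2$, together with the minimal corresponding $w$, is exactly the list in the figure; and \emph{(b)} for each listed pair, $f_{u,v}$ is algebraic, of the indicated degree. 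I would settle \emph{(b)} first, as it reduces to a finite (if heavy) computation, and attack \emph{(a)} afterwards, where the genuine difficulty lies.

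For \emph{(b)}, the plan is to compute $\op L_{u,v}$ for each of the finitely many listed pairs and factor it in $\Q(x)\langle\partial_x\rangle$. Guided by the trident- and Gessel-walk examples of \cref{ssec:walks}, one expects a factorization into operators of order~$1$ and~$2$. An order-$1$ right factor $\partial_x-r$ has algebraic solutions precisely when $r$ has only simple poles, with rational residues whose contributions (including the residue at infinity) sum to an integer — a condition verified by inspection. An order-$2$ factor is handled by Kovacic's algorithm~\cite{Kovacic86}; an irreducible hypergeometric factor of order~$2$ (resp.\ higher) is decided by the Landau--Errera interlacing criterion recalled in \cref{sec:hypergeom} (resp.\ by its Beukers--Heckman generalization), which also pins down the algebraicity degree through the resulting finite monodromy group. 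As a shortcut one may instead imitate the Gessel-walk route: guess the minimal polynomial of $f_{u,v}$ by differential Hermite--Pad\'e approximation and certify it by a resultant computation, reading off the degree from the certified polynomial. A convenient preliminary filter is to check that $\partial_x^p\bmod\op L_{u,v}$ vanishes modulo~$p$ for all small primes~$p$ (zero $p$-curvature), in the spirit of \cref{prop:cartier} and \cref{conj:Grothendieck1}; but since the Grothendieck conjecture is still open, the final certificate of algebraicity must come from one of the unconditional methods above.

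For \emph{(a)}, the completeness of the list, one argues $p$-adically. First one determines the exact denominators of $c_n$: from the recurrence, the primes that can occur are those dividing the leading coefficient $80352000\,n(5n-1)(5n-2)(5n-4)$ and those appearing in $c_0,c_1,c_2$, so for each such prime~$p$ one must control $v_p(c_n)$ as a function of~$n$, including its asymptotic slope and its finer fractional-part behaviour. Integrality of $\widetilde c_n$ for every~$n$ then forces, for each prime~$p$,
\[
n\cdot v_p(w) + v_p\!\big((u)_n\big) + v_p\!\big((v)_n\big) + v_p(c_n) \;\geq\; 0 \qquad(n\geq 0),
\]
and $v_p\big((a/b)_n\big)$ is governed by a Landau-type formula in the fractional parts $\{\ell a/b\}$. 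Combining these inequalities over all~$p$ — invoking Dirichlet's theorem on primes in arithmetic progressions exactly as in the classical Landau--Errera analysis of $\hg{2}{1}$ — cuts the candidate set down to a finite, explicitly enumerable list, which one matches against the figure, the minimal admissible~$w$ for each surviving pair falling out of the same bookkeeping.

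The main obstacle is precisely the input to \emph{(a)}: one needs \emph{uniform} and \emph{tight} lower bounds on $v_p(c_n)$ for all primes~$p$ at once, for a sequence defined only by a recurrence and with no known closed form. There is no general algorithm for the $p$-adic valuations of a P-recursive sequence, so the realistic route is first to obtain an arithmetic description of $c_n$ — through Zagier's interpretation of each $c_n$ as an integral over a moduli space, or through the hypergeometric-flavoured identities underlying the Yang--Zagier and Dubrovin--Yang integrality theorems~\cite{Yurkevich22,DuYaZa21} — and only then to run the Landau--Errera machinery pair by candidate pair. By contrast, the algebraicity of each individual listed $f_{u,v}$, and the determination of its degree, is ``merely'' a large but finite computation.
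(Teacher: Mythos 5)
This statement is \cref{conj:BoWeYu}, an open conjecture: the paper offers no proof, only a heuristic consistency check (vanishing of $\partial_x^p \bmod \op L_4$ modulo $p$ for most small primes, in the spirit of \cref{conj:Grothendieck1}) and algebraicity degrees that are merely ``guessed'' by numerical monodromy. Your proposal is therefore a research plan rather than a proof, and you are candid about the central gap in part \emph{(a)}: without uniform, tight control of $v_p(c_n)$ for all primes $p$ simultaneously — for a sequence known only through its order-$3$ recurrence and a moduli-space integral — the Landau-type bookkeeping cannot even get started, and no such control is currently available.

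Part \emph{(b)} is also not the ``merely finite computation'' you describe. First, the operator annihilating $\sum_n \widetilde c_n x^n$ does not have order at most $3$: multiplying $c_n$ by the hypergeometric term $w^n(u)_n(v)_n$ preserves the \emph{recurrence} order but not the order of the associated differential operator, and the paper's own data (Fig.~\ref{fig:pairs} and the displayed $\op L_4$) shows orders $2$ and $4$, with $\op L_4$ \emph{irreducible} of order $4$. Consequently none of your certification tools applies: Kovacic's algorithm is restricted to order $2$; the Landau--Errera and Beukers--Heckman criteria require the operator to be (generalized) hypergeometric, which $\op L_4$ is not; and a guess-and-prove certification of a minimal polynomial of degree $155520$ is computationally out of reach. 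Deciding finiteness of the differential Galois group of an irreducible order-$4$ operator is possible in principle but is precisely the hard open step here — the paper states explicitly that even for the single pair $(7/30,9/10)$ the algebraicity of all solutions of $\op L_4$ is, to the authors' knowledge, an open problem. So both halves of your plan stall on genuinely open questions, not on bookkeeping.
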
	

\smallskip
\begin{figure}[t]
\centerline{%
\begin{tabular}{|@{\,}c@{\,}|@{\,}c@{\ \ }c@{\,}c@{\,}c@{\,}||@{\,}c@{\,}|@{\,}c@{\ \ }c@{\,}c@{\,}c@{\,}|}
\hline
 $\; \# \; $    & $\; u \;$    & $\; v \;$      & order & {\quad alg. degree}
& $\; \# \; $   & $\; u \;$    & $\; v \;$      & order & {\quad alg. degree}
\\
\hline
   1 & 1/5 & 4/5 & 2  & 120
& 6 & 19/60 & 49/60 & 4   & 155520
\\
   2 & \textcolor{orange}{3/5} & \textcolor{orange}{4/5} & 2  & 120
& 7 & 19/60 & 59/60  & 4   & 46080   
\\
   3 & \textcolor{magenta}{2/5} & \textcolor{magenta}{9/10} & 4   & 120
& 8 & 29/60 & 49/60 & 4   & 46080 
\\
   4 &7/30 & 9/10 & 4   & 155520
& 9 & 29/60 & 59/60 & 4   & 155520 
\\
   5 & 9/10 & 17/30 & 4   & 155520
&  &  &  &    &
\\
\hline
\end{tabular}
}      
\caption{Pairs $(u,v)$ for which the sequence 
$(u)_n \cdot  (v)_n \cdot  c_n$ is (conjecturally) almost integral.
In all cases, the generating functions are (conjecturally) algebraic.
Algebraicity degrees are ``guessed'' by numerical monodromy computations.}
\label{fig:pairs}
\end{figure}

For instance, in case 4 of Figure~\ref{fig:pairs}, \cref{conj:BoWeYu} states that the generating function of the sequence 
$\widetilde{c}_n \coloneqq (2^{14} \cdot 3^7 \cdot 5^4)^n \cdot (7/30)_n \cdot  (9/10)_n \cdot c_n$,
\[\sum_{n\geq 0} \widetilde{c}_n x^n = 
1-3042900 x +58917109730850 x^{2}-1389307608898903890000 x^{3}
+\cdots
\]
is an algebraic solution of degree 155520 of the following order-4 (irreducible) operator
\begin{flalign*}
\op L_4 = 	125 x^{3} \left(88335360 x +1\right) \left(7739670528000 x^{2}+31104000 x +1\right) \partial_x^{4}+\\
	25 x^{2} \left(35095911228443197440000 x^{3}+95685546737664000 x^{2}+2823828480 x +23\right) \partial_x^{3}+\\
	60 x \left(36896918938488668160000 x^{3}+56436938459136000 x^{2}+1177963920 x +7\right) \partial_x^{2}+\\\left(1254982687120120872960000 x^{3}+654118326337536000 x^{2}+16648081920 x +12\right) \partial_x +\\42055270898174263296000 x^{2}-134823448166400 x +36514800.
\end{flalign*}
In particular, this operator admits (conjecturally) only 
algebraic solutions. To our knowledge, this is an open problem. A 
heuristic check is based on \cref{conj:Grothendieck1}: it is not 
difficult to check (using a computer-algebra system) that the remainder 
of the right Euclidean division of $\partial_x^p$ by $\op L_4$ is zero 
for all primes $5 < p < 100$ except for $p \in \{ 7, 31 \}$. 
\cref{conj:Grothendieck1} then suggests that indeed $\op L_4$ possesses algebraic solutions only.

\section{Grothendieck's conjecture}
\label{sec:Grothendieck}

The examples presented in \S\ref{sec:examples} motivate the following question:
given the linear differential equation~\eqref{eq:lineardiff} with coefficients
in $\Q(x)$, can we give necessary and/or sufficient conditions for admitting
algebraic solutions (either one, or all of them)?

As already mentioned earlier, Grothendieck's conjecture
(Conjecture~\ref{conj:Grothendieck1}) provides such a criterion: in its
enhanced version (\cref{groth conj}) it relates the existence of a ``full basis
of algebraic solutions'' of the differential equation~\eqref{eq:lineardiff} to
the existence of a ``full basis of rational solutions'' of its reductions modulo
almost all prime numbers~$p$.

The aim of this section is, first of all, to introduce the notion of 
$p$-curvature and to state a precise version of Grothendieck's 
conjecture. We first examine in detail the case of first order 
equations in \S\ref{ssec:order1} and come to the general case in 
\S\ref{sec:stat Groth conj}. The next sections are more focussed on 
the $p$-curvature itself: in \S\ref{ssec:algorithms}, we describe 
efficient methods for computing it in practice, while in 
\S\ref{ssec:integrality}, we examine how it controls the growth of 
denominators and, in particular, the integrality of the solutions of the 
starting differential equation.

\subsection{The case of equations of order $1$}\label{sec: groth order 1}
\label{ssec:order1}

Consider a linear differential operator of order $1$
\begin{equation}\label{op order 1 char 0}
	\op L = \partial_x + a(x)  
\end{equation} 
with $a(x) \in \Q(x)$. It makes sense to consider the reduction $a(x) \mod p
\in \mathbb{F}_{p}(x)$ of (the coefficients of) $a(x)$ modulo $p$ for almost
all prime numbers~$p$. Thus, one can consider the reduction
\begin{equation}\label{op order 1 reduced}
	\op L_{p}= \partial_x + a(x) \mod p 
\end{equation} 
of $\op L$ modulo $p$ for almost all primes~$p$. This is a linear differential
operator of order $1$ with coefficients in $\mathbb{F}_{p}(x)$. Our aim is to
relate the existence of a nonzero algebraic solution of \eqref{op order 1 char
0} to the existence of nonzero rational solutions of the reduced equations
\eqref{op order 1 reduced}.

In what follows, we say that a function $f$ is a solution of $\op L$ 
(resp. of $\op L_p$) when it is a solution of the corresponding 
differential equation, \emph{i.e.} when $\op L(f) = 0$ (resp. $\op 
L_p(f) = 0$).

\subsubsection{Rational and algebraic solutions in characteristic $0$: a criterion} 

What makes the case of first order equations tractable is the fact that there
is an explicit criterion for the existence of a nonzero algebraic (or rational)
solution.

\begin{prop}\label{prop:rationalorderone}
The monic first order differential operator \eqref{op order 1 char 0} has a nonzero rational (resp. algebraic) solution if and only if its constant coefficient $a(x)$ has at most a simple pole with integral (resp. rational) residue at each point of~$\Qbar$ and vanishes at $\infty$.
\end{prop}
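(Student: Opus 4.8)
The plan is to solve the first-order equation $\op L(y) = \partial_x y + a(x) y = 0$ explicitly. Over any differential field, a nonzero solution $y$ satisfies $y'/y = -a(x)$, so the logarithmic derivative $a = -y'/y$ completely encodes the situation. First I would observe that for a nonzero rational function $y \in \Qbar(x)^\times$, writing its divisor as $y = c\prod_\alpha (x-\alpha)^{n_\alpha}$ (with $c \in \Qbar^\times$, $n_\alpha \in \Z$ almost all zero), the logarithmic derivative is $y'/y = \sum_\alpha n_\alpha/(x-\alpha)$. Hence $-a(x) = \sum_\alpha n_\alpha/(x-\alpha)$ has only simple poles, with integer residues $n_\alpha$ at each finite point, and vanishes at $\infty$ (indeed $y'/y = O(1/x)$ at infinity, and the residue at infinity is $-\sum_\alpha n_\alpha = -\deg(y)$, which is consistent with degree bookkeeping). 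This gives the ``only if'' direction for rational solutions. Conversely, if $a(x)$ has at most simple poles with integer residues $n_\alpha$ at the finite points $\alpha$ and vanishes at $\infty$, then $a(x) + \sum_\alpha n_\alpha/(x-\alpha)$ is a rational function with no poles at all (the simple poles have been cancelled) and vanishing at infinity — hence it is the zero function. Therefore $a(x) = -\sum_\alpha n_\alpha/(x-\alpha)$, and $y = \prod_\alpha (x-\alpha)^{-n_\alpha} \in \Qbar(x)$ is a genuine nonzero solution. (If one wants the solution in $\Q(x)$ rather than $\Qbar(x)$, note that $a \in \Q(x)$ forces the divisor $\sum n_\alpha[\alpha]$ to be Galois-stable, so the product is Galois-invariant up to the constant $c$, which can be taken in $\Q$.)

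For the algebraic case I would argue the same way but allow $y$ to lie in an algebraic extension of $\Qbar(x)$, equivalently on a finite branched cover $\pi\colon C \to \P^1$. The logarithmic derivative $\ud y/y$ is then a rational $1$-form on $C$ with at most simple poles and \emph{integer} residues (the orders of vanishing of $y$ along points of $C$). Pushing down to $\P^1$: at a point $\beta \in \P^1$ over which $\pi$ is ramified with ramification index $e$, a local coordinate upstairs is $s$ with $x - \beta = s^e$ (unit times), so $\ud(x-\beta)/(x-\beta) = e\,\ud s/s$; thus an integer residue $m$ upstairs contributes residue $m/e \in \Q$ to $a\,\ud x = -\ud y/y$ downstairs after accounting for the sheet structure — more carefully, $a = -y'/y$ as a rational function of $x$ is obtained by noting that $y$ satisfies a minimal polynomial over $\Qbar(x)$ and that $-y'/y$, being fixed by... actually the cleanest route is: $a(x) = -\operatorname{tr}_{\Qbar(x)(y)/\Qbar(x)}(\text{something})$ is not automatic, so instead I would directly use that $a\in\Qbar(x)$ and compute its local behavior. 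Near a finite point $\beta$, choose an analytic branch of $y$; then $y$ has a Puiseux expansion $y \sim c (x-\beta)^{m/e}(1 + \cdots)$ with $m \in \Z$, $e \geq 1$, so $a = -y'/y \sim -(m/e)/(x-\beta)$, i.e. a simple pole with residue $-m/e \in \Q$. At $\infty$, $y \sim c x^{m/e}$ so $a = -y'/y = O(1/x)$, i.e. vanishes at $\infty$. This proves ``only if'' for algebraic solutions. Conversely, given $a \in \Qbar(x)$ with at most simple poles, rational residues $r_\beta = p_\beta/q_\beta$ at finite points $\beta$, and $a(\infty)=0$: pick a common denominator $N$ of all the $r_\beta$, set $z = y^N$; then $z'/z = N\cdot y'/y = -Na(x)$ has integer residues $N r_\beta$ at each finite point and vanishes at $\infty$, so by the rational case already proven, $z = \prod_\beta (x-\beta)^{-N r_\beta} \in \Qbar(x)$ solves $\partial_x z + N a\, z = 0$; then $y := z^{1/N}$ is algebraic over $\Qbar(x)$ and is a nonzero solution of $\op L$. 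Again a Galois-descent remark recovers coefficients in $\Q$ if desired.

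The step I expect to be the main technical obstacle is the local analysis at the finite points in the \emph{algebraic} case: making rigorous that ``at most a simple pole with rational residue'' is exactly the constraint coming from Puiseux expansions, including checking there is no obstruction from several branches (the residue one reads off is branch-dependent only up to the choice of which sheet, and the $N$-th power construction is what launders this). One must also be slightly careful that the residues at \emph{all} finite points of $\Qbar$ are controlled, not just the rational ones — but this is automatic since $a \in \Qbar(x)$ has only finitely many poles and the condition is vacuous at points where $a$ is regular. The infinity condition ``vanishes at $\infty$'' is needed precisely to kill any polynomial part of $a$ (which would force $y$ to involve $\exp$ of a polynomial, hence be transcendental, as in \cref{ex:exp}); without it the statement would be false. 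I would present the rational case in full and then reduce the algebraic case to it via the substitution $y \mapsto y^N$, which keeps the argument short.
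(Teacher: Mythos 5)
Your proposal is correct, and the rational case (both directions) as well as the construction of an algebraic solution from the partial fraction data coincide with what the paper does. The genuinely different part is the ``only if'' direction in the algebraic case. The paper's argument is purely algebraic and global: writing $M(Y)=Y^N+\sum_{i<N} m_i(x)Y^i$ for the minimal polynomial of the solution $f$, differentiating $M(f)=0$ and invoking minimality to identify the resulting relation as $N a(x)M(Y)$, one extracts from the constant term that $m_0(x)$ (essentially the norm of $f$) is a nonzero \emph{rational} solution of $y'=Na(x)y$; the rational case then shows $Na(x)$ has simple poles with integer residues, hence $a(x)$ has simple poles with rational residues. You instead argue locally via Newton--Puiseux: a branch $y\sim c(x-\beta)^{m/e}(1+u)$ gives $y'/y = (m/e)(x-\beta)^{-1} + u'/(1+u)$, and since the second term only involves exponents $>-1$ while $a(x)$ is an honest rational function, $a$ has at most a simple pole at $\beta$ with residue $\pm m/e\in\Q$; similarly at infinity. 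Both arguments are valid. The paper's route has the virtue of never leaving the algebraic category (no Puiseux theorem, no choice of embedding into a field of fractional power series) and of making the reduction to the rational case completely explicit through the norm; yours gives a cleaner conceptual picture of \emph{why} the residues are rational (they are ramification-weighted integer orders of vanishing) and, symmetrically, reduces the converse to the rational case via the substitution $y\mapsto y^N$ where the paper constructs $\prod_i(x-a_i)^{e_i}$ directly. The technical point you flag --- that the terms beyond the leading Puiseux monomial contribute nothing to the coefficient of $(x-\beta)^{-1}$ --- is exactly the one that needs to be written out, and it does go through since $u'/(1+u)$ has all exponents at least $1/e-1>-1$.
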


\begin{proof}
We first consider the ``rational case''. 
Let us first assume that $a(x)$ has at most a simple pole with integral residue at each point of $\Qbar$ and vanishes at $\infty$. We thus have  
\begin{equation}\label{decomp elts simples}
a(x) = \sum_{i=1}^m \frac{n_i}{x - a_i}
\end{equation}
for some $a_{i} \in \Qbar$ and some $n_{i} \in \Z$. A straightforward calculation shows that 
\begin{equation}\label{sol cas rationnel}
f(x)=\prod_{i=1}^m (x - a_i)^{n_i}
\end{equation}
is a nonzero rational solution of \eqref{op order 1 char 0}.    

Conversely, assume that \eqref{op order 1 char 0} has a nonzero rational solution $f(x)$.  This $f(x)$ can be factored as a product of linear factors 
$f(x) = c\prod_{i=1}^m (x - a_i)^{n_i}$ with $c \in \Qbar^{\times}$, $a_i \in 
\Qbar$ and $n_i \in \Z$. A straightforward calculation yields 
\[a(x)=\frac{f'(x)}{f(x)} = \sum_{i=1}^m \frac{n_i}{x - a_i}.\]
This shows that $a(x)$ has at most a simple pole with integral residue at each point of $\Qbar$ and vanishes at $\infty$, as expected.

We now consider the ``algebraic case''.
Let us first assume that $a(x)$ has at most a simple pole with rational residue at each point of $\Qbar$ and vanishes at $\infty$. 
Then, we can argue as we did above in the rational case, the only differences being that the $n_{i}$ involved in \eqref{decomp elts simples} are no longer in $\Z$ but in $\Q$ and that \eqref{sol cas rationnel} is no longer rational but algebraic.

Conversely, assume that \eqref{op order 1 char 0} has a nonzero algebraic solution $f(x)$.  
Let $M(Y) = Y^N + \sum_{i=0}^{N-1} m_i(x) Y^i \in \Q(x)[Y]$ be the minimal polynomial of~$f(x)$ over~$\Q(x)$. 
By differentiating  the equality~$M(f)=0$ with respect to~$x$ and by using     
$f'(x)=a(x)f(x)$, we get 
\begin{multline*}
 0=M(f)'= Nf^{N-1} f' + \sum_{i=0}^{N-1} m_i'(x) f^i + \sum_{i=0}^{N-1} m_i(x) i f^{i-1} f'\\ 
 =Nf^{N-1} a(x)f+ \sum_{i=0}^{N-1} m_i'(x) f^i + \sum_{i=0}^{N-1} m_i(x) i f^{i-1} a(x)f\\
 = Na(x) f^{N} + \sum_{i=0}^{N-1} (m_i'(x)+m_i(x) ia(x)) f^i. 
 \end{multline*}
Hence, the polynomial $P(Y)=Na(x) Y^{N} + \sum_{i=0}^{N-1} (m_i'(x)+m_i(x) i a(x)) Y^i$ satisfies $P(f)=0$. By minimality of $M(Y)$, we get $P(Y)=Na(x)M(Y)$. Equating the constant terms in this equality, we get that $m_0(x)$ is a nonzero solution in~$\Q(x)$ of $y'(x) = N a(x) y(x)$. Using the ``rational case'' treated above, we get that $Na(x)$ has at most a simple pole with integral residue at each point of $\Qbar$ and vanishes at $\infty$. Hence, $a(x)$ has at most a simple pole with rational residue at each point of $\Qbar$ and vanishes at $\infty$, as expected. 
\end{proof}

Note that the proof above is very similar (in fact, generalizes) the one used in \cref{ex:exp} to prove that the exponential function is transcendental.

\subsubsection{Algebraic solutions: from characteristic $0$ to characteristic $p$}

We shall now consider the following question: assuming that \eqref{op order 1
char 0} has a nonzero algebraic solution, what can be said about the reduced
equation \eqref{op order 1 reduced}? It is natural to expect that the latter
has a nonzero algebraic solution as well for almost all primes~$p$. Actually,
something even better happens. 

\begin{ex}
\label{ex:eqord1}
Consider the differential equation 
\begin{equation}\label{eq order 1 ex}
 y'=\frac{1}{2(x-1)}y.
\end{equation}
It has a nonzero algebraic solution, namely $f(x)=(1-x)^{1/2}$. For any prime
$p \neq 2$, one can consider the reduction of \eqref{eq order 1 ex} modulo $p$.
Any such reduced equation has a nonzero algebraic solution, namely
$f_{p}(x)=(1-x)^{1/2}$. Let us clarify our notations: $f_{p}(x)$ is a root of
the polynomial $Y^{2}-(1-x) \in \mathbb{F}_{p}(x)[Y]$, whereas $f(x)$ is a root
of the polynomial $Y^{2}-(1-x) \in \Q(x)[Y]$. However, the reduction of
\eqref{eq order 1 ex} modulo $p$ can also be written as $y'=\frac{n_{p}}{x-1}y$
where $n_{p} \in \Z$ is such that $n_{p} \equiv \frac{1}{2} \mod p$ and, hence
$\widetilde{f}_{p}(x)=(1-x)^{n_{p}}$ is a nonzero solution of the reduction
modulo $p$ of \eqref{eq order 1 ex}. The interesting point is that
$\widetilde{f}_{p}(x)$ is not only algebraic but rational, contrary to
$f_{p}(x)$! 
\end{ex}

The conclusion of Example~\ref{ex:eqord1} is a general fact as shown by Theorem \ref{theo:
from char 0 to p} below. Let us first give an analogue in positive
characteristic of the ``rational case'' of Proposition
\ref{prop:rationalorderone}.

\begin{prop}\label{prop:rationalorderone char p}
Consider $b(x) \in \mathbb{F}_{p}(x)$. The differential equation
	\[y' + b(x) y = 0\]
has a nonzero rational solution if and only if $b(x)$ has at most a simple pole
with residue in $\mathbb{F}_{p}$ at each point of $\overline{\mathbb{F}_{p}}$
and vanishes at $\infty$. 
\end{prop}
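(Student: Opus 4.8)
The plan is to mimic the structure of the ``rational case'' of Proposition~\ref{prop:rationalorderone}, being careful about what changes in characteristic~$p$. The key new phenomenon is that in $\mathbb{F}_p(x)$ one has $(x-a)^{pk} = ((x-a)^k)^p$ for all $k$, so that raising a factor to a multiple of $p$ costs nothing in terms of the residue one sees: more precisely, $\frac{d}{dx}\big((x-a)^n\big) = n(x-a)^{n-1}$ and $n \bmod p$ is what governs the logarithmic derivative. This is exactly why the residue condition lives in $\mathbb{F}_p$ rather than in $\mathbb{Z}$.

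First I would prove the ``if'' direction. Suppose $b(x)$ has at most simple poles, all residues lying in $\mathbb{F}_p$, and $b(\infty)=0$. Write the partial fraction decomposition $b(x) = \sum_{i=1}^m \frac{c_i}{x-a_i}$ with $a_i \in \overline{\mathbb{F}_p}$ and $c_i \in \mathbb{F}_p$ (here one should note that since $b \in \mathbb{F}_p(x)$, the poles come in Galois orbits and the construction below is Galois-stable, so the resulting solution lies in $\mathbb{F}_p(x)$; alternatively one may simply work over $\overline{\mathbb{F}_p}(x)$ and descend at the end). For each $i$ choose an integer $n_i \in \{0,1,\dots,p-1\}$ with $n_i \equiv c_i \bmod p$, and set $f(x) = \prod_{i=1}^m (x-a_i)^{n_i}$. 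Then $f'(x)/f(x) = \sum_i \frac{n_i}{x-a_i} = \sum_i \frac{c_i}{x-a_i} = -(-b(x))$, wait — more plainly, $f'/f = \sum_i \frac{n_i}{x-a_i}$, and since $n_i \equiv c_i$ in $\mathbb{F}_p$ we get $f'/f = b(x)$ in $\mathbb{F}_p(x)$; hence $f' + b(x)f = 0$ is wrong in sign, so instead one takes $f(x) = \prod_i (x-a_i)^{-n_i}$ (equivalently $n_i \equiv -c_i$), giving $f'/f = -b(x)$ and thus $f' + b(x)f = 0$. This $f$ is a nonzero element of $\mathbb{F}_p(x)$ (the Galois-stability of the $\{(a_i,c_i)\}$ ensures rationality over $\mathbb{F}_p$), so it is a nonzero rational solution.

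For the converse, suppose $f(x) \in \mathbb{F}_p(x)^\times$ satisfies $f' + b(x)f = 0$, i.e. $b(x) = -f'(x)/f(x)$. Factor $f(x) = c \prod_{i=1}^m (x-a_i)^{n_i}$ over $\overline{\mathbb{F}_p}$ with $c \in \overline{\mathbb{F}_p}^\times$, $a_i$ distinct, $n_i \in \mathbb{Z}$. A direct computation gives $b(x) = -f'/f = -\sum_{i=1}^m \frac{n_i}{x-a_i}$, which manifestly has at most simple poles, with residue $-n_i \bmod p \in \mathbb{F}_p$ at $a_i$ and residue $0$ (hence vanishing) at $\infty$ since $\deg$(numerator) $< \deg$(denominator). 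This establishes the stated equivalence.

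The main obstacle — really the only subtlety beyond the characteristic-$0$ argument — is the descent/rationality issue: one must check that the solution $f$ produced in the ``if'' direction genuinely lies in $\mathbb{F}_p(x)$ and not merely in $\overline{\mathbb{F}_p}(x)$. This follows because $b \in \mathbb{F}_p(x)$ forces the set of pairs $(a_i, c_i)$ (pole, residue) to be stable under $\mathrm{Gal}(\overline{\mathbb{F}_p}/\mathbb{F}_p)$, and since a residue in $\mathbb{F}_p$ is fixed by Galois while the chosen representative $n_i \in \{0,\dots,p-1\}$ depends only on $c_i$, conjugate poles receive equal exponents; thus $f$ is Galois-invariant and hence in $\mathbb{F}_p(x)$. (One could sidestep this entirely by restating the proposition over $\overline{\mathbb{F}_p}(x)$, but as phrased the descent remark is needed.) Everything else is the same straightforward logarithmic-derivative bookkeeping as in Proposition~\ref{prop:rationalorderone}, with ``integral residue'' simply replaced by ``residue in $\mathbb{F}_p$''.
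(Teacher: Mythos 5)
Your proof is correct and follows essentially the same route as the paper, which simply adapts the ``rational case'' of Proposition~\ref{prop:rationalorderone} by replacing $\Qbar$ with $\overline{\mathbb{F}_p}$ and $\Z$ with $\mathbb{F}_p$; your lifting of the residues $c_i\in\mathbb{F}_p$ to integer exponents $n_i$ is exactly the point that makes this adaptation work. The Galois-descent remark you add (and the sign bookkeeping) is a detail the paper leaves implicit, and it is handled correctly.
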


\begin{proof} The proof is entirely similar to the proof of the rational case
of Proposition \ref{prop:rationalorderone}, it is sufficient to replace
everywhere $\Qbar$ by $\overline{\mathbb{F}_{p}}$ and $\Z$ by $\mathbb{F}_{p}$.
\end{proof}

\begin{ex}
\label{ex exp-arctan}
Consider the differential equation 
\begin{equation}\label{eq exp-arctan}
 y'=\frac{1}{x^2 + 1}y
\end{equation}
whose general solution in characteristic zero is $c \cdot \exp(\arctan(x))$
where $c$ is a constant.
We are interested in determining whether or not this equation has a
rational solution in characteristic~$p > 0$.
Modulo $p = 2$, the rational function $b(x)=1/(x^2 + 1)$ writes $1/(x+1)^2$; thus it has a pole of order~$2$ and hence the differential
equation~\eqref{eq exp-arctan} has no nonzero rational solutions by
Proposition~\ref{prop:rationalorderone char p}.
For $p \neq 2$, the partial fraction decomposition of $b(x)$ reads
\[b(x) = \frac i 2 \cdot \left(\frac 1{x+i} - \frac 1{x-i}\right)\]
where $i$ denotes a square root of $-1$ in $\overline{\mathbb{F}_{p}}$. 
We now need to distinguish
between two cases depending on the congruence class of $p$ modulo~$4$. 
Indeed, when $p \equiv 1 \pmod 4$, we have $i \in \Fp$ and so the 
residues belong to~$\Fp$ as well. In this case, the equation~\eqref{eq exp-arctan} has
then a nonzero rational solution, namely
\[y(x) = \left(\frac{x+i}{x-i}\right)^{i/2},\]
where the exponent $i/2$ is a lift in $\Z$ of $i/2 \in \Fp$.
On the contrary, when $p \equiv 3 \pmod 4$, we know that $-1$ is not a
square in $\Fp$, showing that the residues are not in $\Fp$ either.
Therefore, in this case, the equation~\eqref{eq exp-arctan} has
no nonzero rational solution.
\end{ex}

\begin{theo}\label{theo: from char 0 to p}
 If \eqref{op order 1 char 0} has a nonzero algebraic solution, then, for almost all primes~$p$, \eqref{op order 1 reduced} has a nonzero rational solution. 
\end{theo}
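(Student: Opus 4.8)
The plan is to combine Proposition~\ref{prop:rationalorderone} (the characteristic-zero criterion) with Proposition~\ref{prop:rationalorderone char p} (its characteristic-$p$ analogue), and to track what happens to poles and residues under reduction modulo~$p$. Suppose \eqref{op order 1 char 0} has a nonzero algebraic solution. By the ``algebraic case'' of Proposition~\ref{prop:rationalorderone}, the rational function $a(x)$ has at most a simple pole with rational residue at every point of $\Qbar$, and vanishes at $\infty$. Writing the partial fraction decomposition over $\Qbar$, we have
\[
a(x) = \sum_{i=1}^m \frac{e_i}{x - \alpha_i}, \qquad e_i \in \Q,\ \alpha_i \in \Qbar,
\]
with $\deg(\text{numerator}) < \deg(\text{denominator})$ guaranteeing the vanishing at $\infty$. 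Let $N$ be a common denominator of the $e_i$, so that $N e_i \in \Z$ for all $i$; this is exactly the statement that $N a(x)$ has integral residues, i.e.\ the ``rational case'' hypothesis holds for $N a(x)$.

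Next I would reduce modulo a prime $p$. For all but finitely many $p$, the coefficients of $a(x)$ reduce well (no denominators of the $a_i$'s or of the coefficients of $a$ vanish mod $p$), and moreover $p \nmid N$. For such $p$, since $p \nmid N$, the class $\overline{N}$ is invertible in $\F_p$, and hence each residue $e_i \bmod p$ — which is $\overline{N}^{-1} \cdot \overline{N e_i}$ with $\overline{N e_i} \in \F_p$ — lies in $\F_p$. Here one must be slightly careful about the geometry: the points $\alpha_i$ live in $\Qbar$, and their reductions live in $\overline{\F_p}$; distinct $\alpha_i$ may collapse to the same point mod $p$, but for all but finitely many $p$ this does not happen, and when it does not happen the partial fraction decomposition of $a(x) \bmod p$ is simply the termwise reduction, still with all residues in $\F_p$ and still vanishing at $\infty$. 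Then Proposition~\ref{prop:rationalorderone char p} applies to $b(x) = a(x) \bmod p$ and yields a nonzero rational solution of \eqref{op order 1 reduced}, as desired.

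The only genuinely delicate point — and the one I would phrase carefully in the write-up — is the bookkeeping of exceptional primes: one must exclude the finitely many $p$ dividing $N$, the finitely many $p$ at which the reduction of $a(x)$ is ill-defined (denominators vanishing), and the finitely many $p$ for which two of the poles $\alpha_i$ coincide modulo $p$ or a pole collides with $\infty$. All of these are finite sets because the $\alpha_i$ are a fixed finite set of algebraic numbers and $N$ is a fixed integer; concretely, the primes of bad reduction for the closed points of $\P^1_\Q$ supporting the divisor of poles, together with the primes dividing $N$ and the resultant-type quantities measuring coincidences among the $\alpha_i$, form a finite set. Once that finite set is set aside, the argument above goes through verbatim. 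I do not expect any serious obstacle beyond this — the essential phenomenon is precisely the one illustrated in the worked example just above the statement, namely that an algebraic solution $\prod (x-\alpha_i)^{e_i}$ with rational exponents becomes, after clearing the denominator of the exponents modulo a prime not dividing it, a solution with \emph{integral} exponents, hence an honest rational function.
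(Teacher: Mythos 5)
Your proposal is correct and follows essentially the same route as the paper: partial fraction decomposition via the ``algebraic case'' of Proposition~\ref{prop:rationalorderone}, observing that the residues are \emph{rational} (hence reduce into the prime field $\F_p$), and concluding with Proposition~\ref{prop:rationalorderone char p}. The only difference is presentational: where you treat the reduction of the poles $\alpha_i\in\Qbar$ informally, the paper formalizes it by choosing a prime $\mathfrak{P}$ of the number field $K=\Q(\alpha_1,\dots,\alpha_m)$ above $p$ and reducing via the residue map $\pi_{\mathfrak{P}}$ (and note that pole collisions modulo $p$ are in fact harmless, since merged residues still lie in $\F_p$).
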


\begin{proof}
 Proposition \ref{prop:rationalorderone} (and its proof) ensures that 
\begin{equation}\label{decomp simp el alg}
 a(x) = \sum_{i=1}^m \frac{e_i}{x - a_i}
\end{equation}
for some $a_{i}\in \Qbar$ and some $e_{i} \in \Q$. 

Let us first assume that the $a_i$'s belong to $\Q$.   For any prime $p$, we let $\Z_{(p)}$ be the ring of rational numbers with denominator relatively prime to $p$. We denote by $\pi_{p} : \Z_{(p)} \rightarrow \mathbb{F}_{p}$ the ``reduction modulo $p$'' map.  For almost all primes $p$, the $a_{i}$'s and the $e_{i}$'s belong to $\Z_{(p)}$. For any such $p$, we have:  
\[a(x) \mod p= \sum_{i=1}^m \frac{\pi_{p}(e_i)}{x - \pi_{p}(a_i)}\]
and the result follows from Proposition \ref{prop:rationalorderone char p}. 

The proof in the general case is similar but requires basic notions from algebraic number theory. 
Let $K$ be a number field containing the $a_{i}$ and the $e_{i}$. Let $\mathcal{O}_{K}$ be the ring of integers of $K$. For any prime $\mathfrak{P}$ of $K$ (which is by definition a prime ideal of $\mathcal{O}_{K}$), we let ${\mathcal O}_{\mathfrak{P}}$ be the valuation ring of $K$ at $\mathfrak{P}$. We denote by $\kappa_{\mathfrak{P}}$ the corresponding residue field and by $\pi_{\mathfrak{P}} : {\mathcal O}_{\mathfrak{P}} \rightarrow \kappa_{\mathfrak{P}}$ the quotient map. The residue field $\kappa_{\mathfrak{P}}$ is a finite field of characteristic $p$ such that $\mathfrak{P} \cap \Z= (p)$. We say that $\mathfrak{P}$ is above $p$. For almost all primes $p$, for all primes $\mathfrak{P}$ of $K$ above $p$, the $a_{i}$'s and the $e_{i}$'s belong to ${\mathcal O}_{\mathfrak{P}}$. For such $p$ and  $\mathfrak{P}$, we have:  
\[a(x) \mod p=a(x) \mod \mathfrak{P}= \sum_{i=1}^m \frac{\pi_{\mathfrak{P}}(e_i)}{x - \pi_{\mathfrak{P}}(a_i)}.\]
Since $e_{i}$ is rational, $\pi_{\mathfrak{P}}(e_i)$ belongs to the prime subfield $\mathbb{F}_{p}$ of $\kappa_{\mathfrak{P}}$. The result follows from Proposition \ref{prop:rationalorderone char p}. 
\end{proof}

\subsubsection{From characteristic $p$ to characteristic $0$}

It is now tempting to ask: if \eqref{op order 1 reduced} has a nonzero rational
solution for almost all primes $p$, does \eqref{op order 1 char 0} have a
nonzero algebraic solution? The (positive) answer is given by the following
result.

\begin{theo}[Honda~\cite{Honda79}] \label{theo:Honda}
	\label{theo: from char p to 0}
The converse of Theorem \ref{theo: from char 0 to p} holds true, {\it i.e.},
if, for almost all primes~$p$, \eqref{op order 1 reduced} has a nonzero
rational solution, then \eqref{op order 1 char 0} has a nonzero algebraic
solution. 
\end{theo}

\begin{proof}
Consider the partial fraction decomposition of $a(x)$:
\[
a(x) =  P(x)+\sum_{i=1}^m \sum_{j=1}^{r_{i}} \frac{\alpha_{i,j}}{(x - a_i)^{j}}
\]
with $P(x) \in \Qbar[x]$, $a_{i} \in \Qbar$, $\alpha_{i,j} \in \Qbar$ and $r_{j} \in \Z_{\geq 1}$. According to Proposition \ref{prop:rationalorderone}, we have to prove that $P(x)$ and the $\alpha_{i,j}$'s for $j \geq 2$ are $0$ and that the $\alpha_{i,1}$'s belong to $\Q$.

Let $K$ be a number field containing the $a_{i}$, the $\alpha_{i,j}$'s and the coefficients of $P(x)$. We will use the notation and terminology (prime $\mathfrak{P}$ of $K$, valuation ring ${\mathcal O}_{\mathfrak{P}}$, quotient map $\pi_{\mathfrak{P}}$, {\it etc}.) introduced in the proof of Theorem~\ref{theo: from char 0 to p}. For almost all primes $p$, for all primes $\mathfrak{P}$ of $K$ above $p$, the $a_{i}$'s, the $\alpha_{i,j}$'s and the coefficients of $P(x)$ belong to ${\mathcal O}_{\mathfrak{P}}$. For such $p$ and  $\mathfrak{P}$, we have: 
\[
a(x) \mod p = a(x) \mod \mathfrak{P} =  P^{\pi_{\mathfrak{P}}}(x)+\sum_{i=1}^m \sum_{j=1}^{r_{i}} \frac{\pi_{\mathfrak{P}}(\alpha_{i,j})}{(x - \pi_{\mathfrak{P}}(a_i))^{j}},
\]
where $P^{\pi_{\mathfrak{P}}}(x)$ denotes the polynomial obtained from $P(x)$ by applying $\pi_{\mathfrak{P}}$ coefficientwise. 

Proposition \ref{prop:rationalorderone char p} ensures that, for almost all primes $p$, $a(x) \mod p$ has at most simple poles, so, for almost all primes $p$, for all primes $\mathfrak{P}$ of $K$ above $p$, for all $j \in \{2,\ldots,r_{i}\}$, we have $\pi_{\mathfrak{P}}(\alpha_{i,j})=0$, {\it i.e.}, $\alpha_{i,j} \in \mathfrak{P}$. This implies that, for all $j \in \{2,\ldots,r_{i}\}$, we have $\alpha_{i,j}=0$.
Similarly, Proposition \ref{prop:rationalorderone char p} also ensures that, for almost all primes $p$, $a(x) \mod p$ vanishes at $\infty$,  so, for almost all primes $p$, for all primes $\mathfrak{P}$ of $K$ above $p$, $P^{\pi_{\mathfrak{P}}}(x)=0$. This implies that $P(x)=0$. 
Last, Proposition \ref{prop:rationalorderone char p} ensures that, for almost all primes $p$, for all primes $\mathfrak{P}$ above $p$, we have $\pi_{\mathfrak{P}}(\alpha_{i,1}) \in \mathbb{F}_{p}$. Using Kronecker's  Theorem recalled below, we get that $\alpha_{i,1}$ belongs to $\Q$ and Proposition \ref{prop:rationalorderone} yields the desired result: \eqref{op order 1 char 0} has a nonzero algebraic solution.
\end{proof}

The Kronecker Theorem mentioned above (which is usually seen as a consequence of Chebotarev's density Theorem) reads as follows

\begin{theo}[Kronecker]
\label{theo:kronecker}
An irreducible element $P(x)$ of $\Q[x]$ such that, for almost all primes~$p$, $P(x) \mod p$ has a zero in $\mathbb{F}_{p}$ is linear.
\end{theo}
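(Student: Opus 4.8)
The plan is to derive Kronecker's theorem from the Chebotarev density theorem together with a classical counting lemma (essentially Jordan's) on transitive permutation groups. First I would normalize: after clearing denominators and the integer content we may assume $P \in \Z[x]$ is primitive, and it remains irreducible over $\Q$ by Gauss's lemma. Let $\alpha$ be a root of $P$, let $K \subseteq \Qbar$ be the splitting field of $P$, and set $G = \mathrm{Gal}(K/\Q)$. Because $P$ is irreducible, $G$ acts transitively on the set $\Omega$ of roots of $P$, and $n := |\Omega| = \deg P$. The statement to prove becomes: $n = 1$. So I would argue by contradiction, assuming $n \geq 2$.

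The next step is to set up the local-to-global dictionary. Discard the finitely many primes $p$ dividing the leading coefficient or the discriminant of $P$; the remaining $p$ are unramified in $K$ and $P \bmod p$ is separable of degree $n$ over $\F_p$. For each such $p$, choose a prime $\mathfrak P$ of $K$ above $p$ and let $\mathrm{Frob}_{\mathfrak P} \in G$ be the associated Frobenius automorphism, whose conjugacy class depends only on $p$. By Dedekind's factorization theorem, the degrees of the irreducible factors of $P \bmod p$ in $\F_p[x]$ are exactly the cycle lengths of $\mathrm{Frob}_{\mathfrak P}$ acting on $\Omega$; hence $P \bmod p$ has a root in $\F_p$ if and only if $\mathrm{Frob}_{\mathfrak P}$ fixes some element of $\Omega$. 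Thus the hypothesis translates into: for all but finitely many $p$, the Frobenius class of $p$ lies in the subset $S \subseteq G$ of elements having a fixed point on $\Omega$.

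Now I would apply Chebotarev. Since the primes whose Frobenius lies in a fixed conjugacy class $C$ have natural density $|C|/|G| > 0$, and $S$ is a union of conjugacy classes, the set of primes whose Frobenius lies in $G \setminus S$ — i.e. acts as a fixed-point-free permutation (a derangement) of $\Omega$ — has density $|G \setminus S|/|G|$; as this set of primes is finite, we conclude $G \setminus S = \emptyset$, that is, every element of $G$ fixes some root. But for a transitive action on a set of size $n \geq 2$ this is impossible: by the Cauchy--Frobenius counting formula, $\sum_{g \in G} |\mathrm{Fix}_\Omega(g)| = |G| \cdot (\text{number of orbits}) = |G|$, while the identity alone contributes $|\Omega| = n \geq 2$, so some $g \in G$ must satisfy $\mathrm{Fix}_\Omega(g) = \emptyset$. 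This contradiction forces $n = 1$, so $P$ is linear.

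I expect the only delicate point to be the bookkeeping around this local-to-global dictionary: identifying precisely the finite set of excluded primes and invoking Dedekind's theorem to match ``$P \bmod p$ has a root in $\F_p$'' with ``$\mathrm{Frob}_p$ has a fixed point on the roots of $P$''. Once that is in place, the density step is one line and the group-theoretic step is an elementary Burnside-type count; in fact one only needs the weaker Frobenius density theorem on the densities of cycle types of Frobenius, not the full strength of Chebotarev.
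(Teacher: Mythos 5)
Your proof is correct and follows exactly the route the paper alludes to (the paper states the theorem without proof, noting only that it is ``usually seen as a consequence of Chebotarev's density Theorem''): Dedekind's factorization theorem to translate ``root mod $p$'' into ``Frobenius has a fixed point'', the Chebotarev (or, as you rightly note, merely Frobenius) density theorem to conclude every element of the Galois group fixes a root, and Jordan's derangement lemma via the Cauchy--Frobenius count to force $\deg P = 1$. No gaps; this is the standard argument, as in Serre's \emph{On a theorem of Jordan}, which the paper cites elsewhere.
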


\subsubsection{Rational solutions in characteristic $p$ and $p$-curvature}\label{sec: p-curv order 1}
Consider a differential equation
\begin{equation}\label{eq order 1 char p for p curv}
 y' + b(x) y = 0
\end{equation}
with $b(x) \in \mathbb{F}_{p}(x)$. We will give an alternative criterion (an alternative to Proposition \ref{prop:rationalorderone char p}) for determining whether \eqref{eq order 1 char p for p curv} has a nonzero rational solution based on the notion of $p$-curvature that we shall now introduce. 
 
Consider the $\mathbb{F}_{p}(x^p)$-linear map 
\begin{eqnarray*}
 \Delta : \mathbb{F}_{p}(x) &\rightarrow& \mathbb{F}_{p}(x) \\ 
 f &\mapsto& f'+b(x) f. 
\end{eqnarray*}
The additivity is clear, the homogeneity follows from the fact that the elements of $\mathbb{F}_{p}(x^p)$ are constants of (the differential field) $\mathbb{F}_{p}(x)$ in the sense that their derivative is $0$ (more precisely, 
$\mathbb{F}_{p}(x^p)=\{f(x) \in \mathbb{F}_{p}(x) \ \vert \ f'(x)=0 \}$) implying that $(\alpha f)'=\alpha f'$ for all $\alpha \in \mathbb{F}_{p}(x^p)$ and $f \in \mathbb{F}_{p}(x)$. 

\begin{defi}
The map 
\[
\Delta^{p}: \mathbb{F}_{p}(x) \rightarrow \mathbb{F}_{p}(x)
\] 
is called the $p$-curvature of \eqref{eq order 1 char p for p curv}.  
\end{defi}

A remarkable and fundamental fact is that the $p$-curvature is not only $\mathbb{F}_{p}(x^p)$-linear but it is also $\mathbb{F}_{p}(x)$-linear.  Indeed, a simple induction along with the Leibniz rule show that, for all $k \geq 0$, for all $\alpha, f \in \mathbb{F}_{p}(x)$, we have 
\[
\Delta^{k} (\alpha f) = \sum_{i=0}^{k} \binom{k}{i} \alpha^{(i)} \Delta^{k-i}(f).
\]
Taking $k=p$ and using the fact that 
$\binom{p}{i} \equiv 0 \bmod p$ for all $1<i<p$,
we get 
\[
\Delta^{p} (\alpha f)= \alpha^{(p)} + \alpha \Delta^{p}(f),
\]
and therefore the $\mathbb{F}_{p}(x)$-homogeneity follows from the fact that $\alpha^{(p)}=0$. 

\begin{prop}\label{prop:cartier order 1}
 The differential equation \eqref{eq order 1 char p for p curv} has a nonzero rational solution if and only if $\Delta^{p}=0$.
\end{prop}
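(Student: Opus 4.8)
The plan is to prove both implications directly, using the structure of the $\F_p(x)$-linear map $\Delta \colon f \mapsto f' + b(x)f$ together with the fact established just above the statement that its $p$-th iterate $\Delta^p$ is $\F_p(x)$-linear. The key observation is that $\Delta^p$ is $\F_p(x)$-linear as a map $\F_p(x) \to \F_p(x)$, i.e. it is multiplication by a scalar $c(x) \in \F_p(x)$, namely $c(x) = \Delta^p(1)$. So the statement ``$\Delta^p = 0$'' is equivalent to ``$\Delta^p(1) = 0$'', and more importantly the condition ``$\Delta^p = 0$'' is a single equation rather than the vanishing of an operator.

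First, for the easy direction: suppose \eqref{eq order 1 char p for p curv} has a nonzero rational solution $f \in \F_p(x)$, so that $\Delta(f) = 0$. Then $\Delta^p(f) = 0$ as well. But by $\F_p(x)$-linearity, $\Delta^p$ is multiplication by $c(x) := \Delta^p(1)$, so $\Delta^p(f) = c(x) f$. Since $f \neq 0$, we conclude $c(x) = 0$, hence $\Delta^p = 0$.

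For the converse, suppose $\Delta^p = 0$. I would argue that the kernel of $\Delta$ is ``as large as possible'' in a suitable sense, or directly produce a nonzero rational solution. One clean approach: since $\Delta^p = 0$ and $\Delta$ is $\F_p(x^p)$-linear on the $p$-dimensional $\F_p(x^p)$-vector space $\F_p(x)$, the operator $\Delta$ is nilpotent; being nilpotent on a nonzero space, $\ker \Delta \neq 0$, which is exactly a nonzero rational solution of \eqref{eq order 1 char p for p curv}. This is the heart of the matter and I expect it to be the main point requiring care: one must check that $\Delta$ is genuinely $\F_p(x^p)$-linear (which is noted in the text) and that $\F_p(x)$ is a finite-dimensional $\F_p(x^p)$-vector space (of dimension $p$, with basis $1, x, \ldots, x^{p-1}$), so that the standard linear-algebra fact ``a nilpotent endomorphism of a nonzero finite-dimensional vector space has nonzero kernel'' applies. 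That $\Delta^p = 0 \Rightarrow \Delta$ nilpotent is immediate since $\Delta^p$ is literally the $p$-th power of $\Delta$.

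The main obstacle, if any, is purely in making sure the two notions of linearity are not conflated: the proof uses $\F_p(x^p)$-linearity of $\Delta$ (to get a finite-dimensional vector space on which $\Delta$ acts, where nilpotency forces a kernel) but $\F_p(x)$-linearity of $\Delta^p$ (to get that $\Delta^p = 0$ iff $\Delta^p(1)=0$, and to run the easy direction cleanly). Both facts have already been justified in the excerpt, so the proof is short. I would write it in essentially the two-paragraph form above, emphasizing the nilpotency argument for the nontrivial direction.
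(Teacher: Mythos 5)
Your proof is correct and follows essentially the same route as the paper: the forward direction uses the $\F_p(x)$-linearity of $\Delta^p$ exactly as in the text, and your nilpotency argument for the converse is just the contrapositive of the paper's remark that an injective $\Delta$ would force $\Delta^p$ to be injective. No gaps.
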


\begin{proof}
 If \eqref{eq order 1 char p for p curv} has a nonzero rational solution $f$, then $\Delta(f)=0$ and, hence, $\Delta^{p}(f)=0$. As $\Delta^{p}$ is $\mathbb{F}_{p}(x)$-linear, we get $\Delta^{p}=0$.  
Conversely, if $\Delta^{p}=0$, then $\Delta$ has a nonzero kernel and, hence, \eqref{eq order 1 char p for p curv} has a nonzero rational solution.   
\end{proof}

\begin{rem}
An easy calculation shows that, if \eqref{eq order 1 char p for p curv} has $p$-curvature $0$, then an explicit nonzero rational solution is given by
\[
\sum_{k=0}^{p-1} (-1)^{k}\frac{x^{k}}{k!} \Delta^{k}(1).
\]
\end{rem}

We conclude this section by giving inductive and closed formulae for the $p$-curvature. 
As it is $\mathbb{F}_{p}(x)$-linear, the $p$-curvature is entirely determined by its value at $1$:
\[
\forall f \in \mathbb{F}_{p}(x), \ \Delta^{p}(f)=\Delta^{p}(1) f.
\]
For this reason, we often say that the $p$-curvature of \eqref{eq order 1 char p for p curv} is $\Delta^{p}(1)$.
It turns out that the $p$-curvature $\Delta^{p}(1)$ can be calculated inductively.
Indeed, for all $k \geq 0$, we denote by $b_k(x) \in \mathbb{F}_{p}(x)$ the constant term of the differential operator $\big(\partial_x+b(x)\big)^{k}$, so that
\begin{equation}
\label{eq:dxbxk}
\big(\partial_x+b(x)\big)^{k} = 
\partial_x^{k}+ \star \partial_x^{k-1}+\cdots  +\star \partial_x + b_{k}(x),
\end{equation}
where $\star$ are some unspecified elements of $\mathbb{F}_{p}(x)$. 
Equating the terms of degree $0$ (with respect to $\partial_x$) in the equality $(\partial_x+b(x))^{k+1}=(\partial_x+b(x)) \cdot (\partial_x+b(x))^{k}$, we get the following inductive formula for computing the $b_{k}(x)$'s:
\begin{equation}\label{eq: inductive formula p-curv ord 1}
\forall k \geq 0, \ b_{k+1}(x)=b_{k}'(x)+b(x)b_{k}(x). 
\end{equation}
This gives the expected inductive formula for the $p$-curvature of \eqref{eq order 1 char p for p curv}, since this is equal to
\[\Delta^{p}(1)= b_{p}(x).\]

\begin{rem} \label{rem:pcurv order 1}
When $k = p$, it is actually possible to determine the coefficients $\star$ in
equation~\eqref{eq:dxbxk}. Indeed, we observe that $(\partial_x+b(x))^p$ is a
central element in $\Fp(x)\langle \partial_x \rangle$. This shows that the
right-hand side of equation~\eqref{eq:dxbxk} must be central as well, implying
eventually that all terms in $\partial_x^i$ with $0 \leq i < p$ have to vanish, 
and that $b_p(x)$ belongs to $\F_p(x^p)$.

In conclusion, we have the relation
\[\big(\partial_x+b(x)\big)^p = \partial_x^p + b_p(x).\]
From this, we deduce that $b_p(x)$ is also the opposite of the remainder in the
right Euclidean division of $\partial_x^p$ by $\op L = \partial_x + b(x)$.
In particular, the $p$-curvature vanishes if and only if $\op L$ divides
$\partial_x^p$ in $\Fp(x)\langle \partial_x \rangle$.

\end{rem}

Last, one can deduce from \eqref{eq: inductive formula p-curv ord 1} the
following remarkable closed formula (that does not extend to higher order
equations).

\begin{theo}
\label{theo:pcurvorder1}
We have $b_{p}(x)=b^{(p-1)}(x)+b(x)^{p}$.
\end{theo}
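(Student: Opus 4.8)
The plan is to first establish, for every integer $k\geq 0$, a closed form for $b_k(x)$ that is valid in every characteristic---namely the complete Bell polynomial in the successive derivatives of $b$---and then to extract the stated identity from it by a $p$-adic divisibility argument special to the case $k=p$. Concretely, I claim that
\[
b_k \;=\; \sum_{\pi\,\vdash\,\{1,\dots,k\}}\ \prod_{B\in\pi} b^{(|B|-1)},
\]
the sum ranging over all set partitions $\pi$ of $\{1,\dots,k\}$ and $|B|$ being the number of elements of a block $B$. This follows by induction on $k$ from $b_0=1$ and the recurrence $b_{k+1}=b_k'+b\,b_k$ of Eq.~\eqref{eq: inductive formula p-curv ord 1}: by the Leibniz rule, differentiating the term $\prod_{B\in\pi}b^{(|B|-1)}$ attached to a partition $\pi$ of $\{1,\dots,k\}$ produces precisely the terms attached to the partitions of $\{1,\dots,k+1\}$ obtained by adjoining $k+1$ to one of the blocks of $\pi$, while multiplying that term by $b=b^{(0)}$ produces the term of the partition having $\{k+1\}$ as a fresh singleton block; together these yield every partition of $\{1,\dots,k+1\}$ exactly once. (This is Fa\`a di Bruno's formula applied to $u=\exp\!\int b$, which satisfies $u^{(k)}=b_k\,u$.)

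Then I specialize to $k=p$ and reduce modulo $p$. Grouping the partitions of $\{1,\dots,p\}$ by their type---the multiset $\lambda$ of block sizes, with $m_j$ blocks of size $j$, so that $\sum_j j\,m_j=p$---the type $\lambda$ contributes the monomial $\prod_j\bigl(b^{(j-1)}\bigr)^{m_j}$ with multiplicity $p!/\bigl(\prod_j (j!)^{m_j}\,m_j!\bigr)$. Since $p$ is prime, the numerator $p!$ is divisible by $p$ exactly once, so this multiplicity is divisible by $p$ unless the denominator is too; and a factor $j!$ is a multiple of $p$ only when some block has size $\geq p$ (forcing $\lambda=(p)$), while a factor $m_j!$ is a multiple of $p$ only when some $m_j\geq p$, which in view of $\sum_j j\,m_j=p$ forces $\lambda=(1,\dots,1)$. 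These two extreme types contribute $b^{(p-1)}$ and $b^{p}$ respectively, each with multiplicity $1$, and every other type contributes $0$ modulo $p$. Hence $b_p\equiv b^{(p-1)}+b^{p}$, which is the assertion.

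The only real work is to pin down the partition-insertion bijection of the first step and to carry out the valuation bookkeeping of the second. An alternative, essentially computation-free, route is to apply Jacobson's formula for $(a+b)^p$ in an associative ring of characteristic $p$, taking $a=\partial_x$ and $b=b(x)$ (viewed as a multiplication operator): one checks by a one-line induction, using $[\partial_x,f]=f'$ and the fact that multiplication operators commute, that $\operatorname{ad}(t\partial_x+b)^{k}(\partial_x)=-\,t^{\,k-1}b^{(k)}$ for every $k\geq 1$; hence in Jacobson's expansion of $(\partial_x+b)^p$ only the term indexed by $p-1$ survives, and it equals $b^{(p-1)}$ since $p-1\equiv -1\pmod p$. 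Combined with the identity $(\partial_x+b)^p=\partial_x^p+b_p$ recorded in Remark~\ref{rem:pcurv order 1}, this yields $b_p=b^{(p-1)}+b^{p}$ once again.
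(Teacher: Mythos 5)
Your main argument is essentially the paper's own proof. The paper also writes $b_k$ as $\sum_{\underline\alpha}\lambda_{\underline\alpha}\,b^{\alpha_1}(b^{(1)})^{\alpha_2}\cdots(b^{(k-1)})^{\alpha_k}$ with $\lambda_{\underline\alpha}=k!/\bigl(\alpha_1!\cdots\alpha_k!\,(2!)^{\alpha_2}\cdots(k!)^{\alpha_k}\bigr)$ and then observes that for $k=p$ only $\underline\alpha=(p,0,\dots,0)$ and $(0,\dots,0,1)$ survive modulo $p$, each with coefficient $1$; your type-multiplicity $p!/\bigl(\prod_j(j!)^{m_j}m_j!\bigr)$ is exactly this $\lambda_{\underline\alpha}$, and your valuation bookkeeping is the same. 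The one genuine improvement is how you obtain the closed formula: the paper defines $\lambda_{\underline\alpha}$ by a recursion and then verifies the factorial expression by a somewhat opaque induction, whereas your set-partition/Fa\`a di Bruno bijection (insert $k+1$ into an existing block for the derivative term, as a new singleton for the multiplication term) gives the coefficients directly and transparently; this is a cleaner write-up of the same proof. Your second route, via Jacobson's formula $(a+b)^p=a^p+b^p+\sum_i s_i(a,b)$ together with the computation $\operatorname{ad}(t\partial_x+b)^k(\partial_x)=-t^{k-1}b^{(k)}$ and the identity $(\partial_x+b)^p=\partial_x^p+b_p$ from Remark~\ref{rem:pcurv order 1}, is a genuinely different and shorter argument (no combinatorics at all), at the price of taking Jacobson's formula as a black box; both steps you need are correct and there is no circularity, since the Remark is established independently by the centrality argument.
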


\begin{proof}[Proof (after Jacobson~\cite{Jacobson37})]
For a positive integer~$k$, let $I_k$ be the set of all
tuples $\underline \alpha = (\alpha_1, \ldots, \alpha_k)$
of nonnegative integers such that $\sum_{i=1}^k i \alpha_i = k$.
A calculation shows that $b_k(x)$ is explicitly given by 
\[b_k(x) = \sum_{\underline \alpha \in I_k} 
\lambda_{\underline \alpha} \cdot
b(x)^{\alpha_1} \cdot b^{(1)}(x)^{\alpha_2} \cdots 
b^{(k-1)}(x)^{\alpha_k},\]
where $\lambda_{\underline \alpha}$ is a coefficient in $\Z$
determined by the following rule
\[\lambda_{\underline \alpha} =
\sum_{i=1}^k (\alpha_{i-1} + 1) \cdot \lambda_{\tau_i(\underline \alpha)}
\qquad \text{(for } \underline \alpha \in I_k \text{)},\]
where $\tau_i$ denotes the function from $I_k$ to $I_{k-1}$ defined by
\[\tau_i(\underline \alpha) = (\alpha_1,\, \ldots,\, \alpha_{i-2},\,
\alpha_{i-1}{-}1,\, \alpha_i{+}1,\, \alpha_{i+1},\, \ldots,\, \alpha_{k-1})\]
and where we agree that $\lambda_{\underline \beta} = 0$ if 
$\underline \beta$ has one negative coordinate. From this relation,
one can check by induction on $k$ that 
$\lambda_{\underline \alpha}$ (with $\underline \alpha = (\alpha_1,
\ldots, \alpha_k) \in I_k$) is given by the closed formula:
\[\lambda_{\underline \alpha} = \frac{k!}{(\alpha_1)! \cdots (\alpha_k)! 
\cdot (2!)^{\alpha_2} \cdot (3!)^{\alpha_3} \cdots (k!)^{\alpha_k}}.\]
In particular, when $k = p$, we find that the $\lambda_{\underline
\alpha}$'s vanish modulo $p$ for all $\underline \alpha \in I_p$ 
(thanks to the numerator $p!$)
except when $\underline \alpha = (p, 0, \ldots, 0)$ or $\underline
\alpha = (0, \ldots, 0, 1)$ (because, in those cases, the numerator
cancels with a factor $p!$ in the denominator). Besides, in both
cases, one finds $\lambda_{\underline \alpha} = 1$. This concludes the proof.
\end{proof}

\begin{remark}
\label{rem:pcurvpfd}
Remarkably, the explicit formula of Theorem~\ref{theo:pcurvorder1}
provides a second proof of Proposition~\ref{prop:rationalorderone char p}.
Indeed, consider a rational function $b(x) \in \Fp(x)$ and write its
partial fraction decomposition
\[b(x) = P(x) + \sum_{i=1}^m \sum_{j=1}^{r_i} \frac{\beta_{i,j}}{(x-b_i)^j}\]
where $P(x)$ is a polynomial, the $b_i$'s are pairwise distinct
elements of $\overline{\Fp}$ and $\beta_{i,j} \in \overline{\Fp}$ with 
$\beta_{i,r_i} \neq 0$. Each $b_i$ is a pole of $b(x)$ of multiplicity~$r_i$ 
and residue $\beta_{i,1}$. Moreover, $b(x)$ has an extra pole at infinity when the
degree of $P(x)$ is positive.
A direct computation now gives:
\[b_p(x) = P^{(p-1)}(x) + P(x)^p 
- \sum_{i=1}^m \sum_{\substack{1 \leq j \leq r_i \\ j \equiv 1 \text{ mod } p}}
\frac{\beta_{i,j}}{(x-b_i)^{j+p-1}}
+ \sum_{i=1}^m \sum_{j=1}^{r_i} \frac{\beta_{i,j}^p}{(x-b_i)^{pj}} \]
and we see that the latter is zero if and only if $P(x)$ vanishes and, 
for all $i \in \{1, \ldots, m\}$, we have $r_i = 1$ and $\beta_{i,1}^p = 
\beta_{i,1}$, \emph{i.e.} $\beta_{i,1} \in \Fp$.
After Proposition~\ref{prop:cartier order 1}, we then recover by different 
means the result of Proposition~\ref{prop:rationalorderone char p}.
\end{remark}

\begin{ex}
\label{ex:exp-arctan2}
Applying the above recipe with the differential operator
$$\op L_p = \partial_x - \frac 1{x^2 + 1}$$
of Example~\ref{ex exp-arctan}, we find that its $p$-curvature is
explicitly given by
$$b_p(x) = - \frac{c_p}{(x^2 + 1)^p}$$
where $c_p = 1$ if $p = 2$, $c_p = 0$ for $p \equiv 1 \pmod 4$ and $c_p = 2$ 
for $p \equiv 3 \pmod 4$. In particular, we retrieve by different 
means the dichotomy that we had already observed in Example~\ref{ex 
exp-arctan}.

The situation can be more complex if we slightly change the coefficient $b(x)$.
For instance, consider the first-order differential operator:
$$\op L_p = \partial_x - \frac{1}{x^3 - x - 1}.$$
Its $p$-curvature is zero if and only if the polynomial $x^3 - x - 1$ splits in $\F_p[x]$ and $p\neq 23$.
By~\cite[\S5.3]{Serre03}, this happens only for the primes $p \in \{ 59, 101, 167, 173, 211, 223, 271, 307, 317, \ldots\}$ that have the property that $p\neq 23$ and they can be written as $p = m^2+mn+6n^2$ with $m,n \in \Z$, or equivalently, if and only if the coefficient of $x^{p-1}$ in $\prod_{n=1}^\infty
(1-x^n)(1-x^{23n})$ is equal to 2.
(See also~\cite[Prop.~3.3]{Put96}.)
\end{ex}

\begin{remark}
There is a link between the $p$-curvature of first-order linear differential operators and a fairly famous algorithm for factoring polynomials in $\F_p[x]$, designed by Niederreiter in~\cite{Niederreiter93}. 
This connection seems to have been unnoticed until now.
To factor a separable polynomial $f = \prod_i g_i$ of $\F_p[x]$, with irreducible $g_i$, Niederreiter considers the space of rational functions $y = h/f$ solutions of the equation $y^{(p-1)} + y^p = 0$, and shows that as a vector space over $\F_p$ it is generated by the logarithmic derivatives $g_i'/g_i$. As a result, factoring boils down to a linear algebra problem over $\F_p$. This algorithm  created a lot of excitement 
as a promising alternative to the much more classical one due to Berlekamp~\cite{Berlekamp67}.
\end{remark}	

\medskip

Putting together Theorem~\ref{theo: from char 0 to p}, 
Theorem~\ref{theo: from char p to 0}, 
Proposition~\ref{prop:cartier order 1}
and Remark~\ref{rem:pcurv order 1},
we obtain the following result.

\begin{theo}
\label{theo:groth-order1}
Let $\op L = \partial_x + a(x)$ as in equation~\eqref{op order 1 char 0}
and, for almost all prime numbers $p$, denote by $\op L_p$ its reduction 
modulo $p$ as in equation~\eqref{op order 1 reduced}.
The following properties are equivalent:
\begin{enumerate}[label=(\arabic{enumi}),topsep=\parsep,itemsep=\parsep,parsep=0pt]
 \item $\op L$ has a nonzero algebraic solution;
 \item for almost all primes $p$, $\op L_p$ has a nonzero rational solution;
 \item for almost all primes $p$, the $p$-curvature of $\op L_p$ vanishes;
 \item for almost all primes $p$, the operator $\op L_p$ divides 
       $\partial_x^p$ in $\Fp(x)\langle \partial_x \rangle$.
\end{enumerate}
\end{theo}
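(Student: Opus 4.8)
The plan is to assemble the statement from the four results quoted just before it, by combining a cycle of implications with equivalences that hold prime-by-prime. First I would dispose of the local part, i.e. the equivalence of (2), (3) and (4). Fix a prime $p$ large enough that the reduction $\op L_p = \partial_x + b(x)$, with $b(x) = a(x) \bmod p$, is well defined; this excludes only the finitely many $p$ dividing a denominator of $a(x)$. Proposition~\ref{prop:cartier order 1}, applied with this $b(x)$, says that $\op L_p$ has a nonzero rational solution if and only if its $p$-curvature $\Delta^p$ vanishes, which is exactly the equivalence of the ``$p$-th instances'' of (2) and (3). Next, Remark~\ref{rem:pcurv order 1} gives the identity $(\partial_x + b(x))^p = \partial_x^p + b_p(x)$ in $\Fp(x)\langle\partial_x\rangle$, where $b_p(x) = \Delta^p(1)$; hence $b_p(x)$ is, up to sign, the remainder of the left Euclidean division of $\partial_x^p$ by $\op L_p$, so that the $p$-curvature vanishes precisely when that remainder is $0$, i.e. when $\op L_p$ divides $\partial_x^p$. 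This is the equivalence of the $p$-th instances of (3) and (4). Since these two equivalences hold for every admissible $p$ and do not enlarge any exceptional set, the three statements (2), (3) and (4) — each of the form ``for almost all $p$, [a local condition at $p$]'' — are equivalent verbatim.

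It then remains to close the loop through (1). The implication (1)~$\Rightarrow$~(2) is exactly Theorem~\ref{theo: from char 0 to p}: if $\op L$ has a nonzero algebraic solution, then for almost all $p$ the reduced operator $\op L_p$ has a nonzero rational solution. The reverse implication (2)~$\Rightarrow$~(1) is Honda's theorem, Theorem~\ref{theo: from char p to 0}. Chaining (1)~$\Rightarrow$~(2)~$\Leftrightarrow$~(3)~$\Leftrightarrow$~(4), and then (2)~$\Rightarrow$~(1) via Honda, yields the equivalence of all four properties.

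The argument is essentially bookkeeping, so the one point I would treat with care — and regard as the (modest) main obstacle — is the interaction of the ``for almost all primes'' quantifier with the chain: one must note that the finitely many excluded primes arise from only three sources, namely the primes at which $\op L_p$ is undefined, the exceptional set of Theorem~\ref{theo: from char 0 to p}, and that of Theorem~\ref{theo: from char p to 0}, and that these can simply be unioned, whereas the genuinely pointwise equivalences (2)$_p$~$\Leftrightarrow$~(3)$_p$~$\Leftrightarrow$~(4)$_p$ contribute nothing further. Once this is observed, no input beyond the four cited results is needed.
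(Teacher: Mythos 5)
Your proposal is correct and matches the paper's own argument, which assembles the theorem from exactly the same four ingredients: Theorem~\ref{theo: from char 0 to p} and Honda's Theorem~\ref{theo: from char p to 0} for the equivalence of (1) and (2), Proposition~\ref{prop:cartier order 1} for (2)$\Leftrightarrow$(3), and Remark~\ref{rem:pcurv order 1} for (3)$\Leftrightarrow$(4). Your extra care with the ``almost all primes'' bookkeeping is a point the paper leaves implicit, but there is no difference in substance.
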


Grothendieck's $p$-curvature conjecture is a far reaching conjectural generalization of these equivalences for higher order equations.

\subsection{The general case}\label{sec:stat Groth conj}
\label{ssec:groth-general}

Let us now consider a linear differential operator of arbitrary order: 
\begin{equation}
\label{eq:opL}
\op L = 
\partial_x^n + a_{n-1}(x) {\cdot}
\partial_x^{n-1} + \cdots + a_1(x) {\cdot}\partial_x + a_0(x)
\end{equation}
with $a_{i}(x) \in \Q(x)$. As in the order-$1$ case, one can consider 
the reduction $\op L_{p}$ of $\op L $ modulo~$p$ for almost all primes
$p$. This is a differential operator of order $n$ with coefficients in 
$\mathbb{F}_{p}(x)$. Grothendieck's conjecture relates the algebraicity 
of the solutions of $\op L$ to the rationality of the solutions of $\op 
L_{p}$ for almost all primes $p$.

First of all, we notice that the straightforward generalization of 
Theorem~\ref{theo:groth-order1} cannot be true for higher order
differential operators; indeed, we have seen 
in \S \ref{sec:examples} many examples of differential equations that 
do not admit algebraic solutions and whose reductions modulo~$p$ have 
nonzero rational solutions for almost all~$p$; this is the case, for 
instance, of most of hypergeometric functions and diagonals.
The main new insight behind Grothendieck's conjecture is the brilliant
idea to replace the existence of a unique nonzero solution by the 
existence of a \emph{full basis} of solutions.

We recall that the set of solutions of $\op L$ in $\overline{\Q(x)}$ is a $\Qbar$-vector space of dimension at most $n$. When this dimension is maximal, that is, equal to $n$, we say that $\op L$ has a full basis of algebraic solutions. 
Similarly, it is tempting to look at the set of solutions of $\op L_{p}$ in $\mathbb{F}_{p}(x)$ as an $\mathbb{F}_{p}$-vector space. However, the example given by the differential equation $y^{(p)}=0$ shows that this vector space may be infinite dimensional (any element of $\mathbb{F}_{p}(x)$ is a solution of $y^{(p)}=0$). The point is that $\Qbar$ is the relevant base field in characteristic $0$ because it is the field of differential constants of $\overline{\Q(x)}$. In characteristic $p$, the field of differential constants of $\mathbb{F}_{p}(x)$ is not $\mathbb{F}_{p}$ but $\mathbb{F}_{p}(x^{p})$; thus, a differential constant may depend on $x$ in characteristic~$p$ (!)
Now, one can prove that the set of solutions of $\op L_{p}$ in $\mathbb{F}_{p}(x)$ is an $\mathbb{F}_{p}(x^{p})$-vector space of dimension at most $n$. When this dimension is maximal, that is, equal to $n$, we say that $\op L_{p}$ has a full basis of rational solutions. 

We are now ready to state Grothendieck's conjecture.

\begin{conj}[Grothendieck's conjecture]\label{groth conj}
For a differential operator $\op L \in \Q(x)\langle \partial_x \rangle$
as in equation~\eqref{eq:opL},
the following properties are equivalent:
\begin{enumerate}[label=(\arabic{enumi}),topsep=\parsep,itemsep=\parsep,parsep=0pt]
 \item $\op L$ has a full basis of algebraic solutions; 
 \item for almost all primes $p$, $\op L_{p}$ has a full basis 
       of rational solutions.
\end{enumerate}
\end{conj}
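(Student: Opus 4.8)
The two implications behave very differently. The implication $(1)\Rightarrow(2)$ is a theorem and can be proved along the lines already used for first-order operators, whereas $(2)\Rightarrow(1)$ is the still-unsolved Grothendieck--Katz conjecture, for which I can only lay out a strategy and indicate where it stalls.

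\medskip
\noindent\textbf{The implication $(1)\Rightarrow(2)$.} First I would recast $(2)$ in terms of $p$-curvature. The higher-rank analogue of \cref{prop:cartier order 1}, due to Cartier, says that $\op L_p$ has a full basis of rational solutions if and only if its $p$-curvature vanishes, and — exactly as in \cref{rem:pcurv order 1} — this holds if and only if $\op L_p$ divides $\partial_x^p$ in $\F_p(x)\langle\partial_x\rangle$, i.e.\ the remainder occurring in \cref{prop:cartier} is zero. So $(2)$ asserts precisely that the $p$-curvature of $\op L_p$ vanishes for almost all $p$. Now assume $(1)$. The $n$ algebraic solutions of $\op L$ lie in a single finite extension of $\Q(x)$, and passing to its Galois closure we obtain a finite Galois extension $L=k(Y)$, with $k$ a number field and $Y$ a smooth projective curve, such that $Y\to\P^1$ is finite étale outside a finite set. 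Over $L$ the differential module attached to $\op L$ is trivial, the basis of solutions furnishing $n$ independent horizontal sections. Spreading out over a ring of $S$-integers of $k$ and reducing modulo a prime $\mathfrak P$ of $k$ not above the bad primes — exactly as in the proof of \cref{theo: from char 0 to p} — the cover stays finite étale and the reduced module stays trivial after pullback; since the $p$-curvature of a connection over a field vanishes if and only if it vanishes after a separable field extension, the $p$-curvature of $\op L_p$ itself vanishes, which is $(2)$.

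\medskip
\noindent\textbf{The implication $(2)\Rightarrow(1)$: the open problem.} By the differential Galois correspondence, $(1)$ is equivalent to the differential Galois group $G$ of $\op L$ over $\overline{\Q(x)}$ being finite, hence to $\operatorname{Lie}(G)=0$. Starting from the hypothesis that the $p$-curvature of $\op L_p$ vanishes for almost all $p$, the plan splits into a local and a global step. The \emph{local step} would show that $\op L$ is regular singular with exponents in $\Q$ at each of its singular points: this is a global-to-local shadow of the conditions in \cref{conj:ChristolAndre}, and it follows from known relations between the $p$-curvature and the local formal structure of $\op L$. The \emph{global step} — showing $\operatorname{Lie}(G)=0$ — is where everything is concentrated. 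One is tempted to invoke Katz's theorem that the $p$-curvatures, suitably normalised, generate $\operatorname{Lie}(G)$; since here they all vanish, that would force $\operatorname{Lie}(G)=0$ and finish the argument. The catch is that this statement is about the Lie algebra spanned by the $p$-curvatures \emph{after conjugating them into one fixed algebraic Lie algebra}, and bridging the gap between \emph{the $p$-curvature of $\op L_p$ is zero for a density-one set of primes} and \emph{$\operatorname{Lie}(G)=0$} is exactly the content of the conjecture; no unconditional such bridge is known, and this is the step I expect to be the main obstacle.

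\medskip
\noindent\textbf{Known cases.} The global step has been carried out under extra structural hypotheses, and the way it is carried out makes clear what is missing in general. When $\op L$ is a subquotient of a Gauss--Manin connection one brings in the Hodge filtration: vanishing $p$-curvature modulo $p$ forces the Hodge filtration to be horizontal modulo $p$ for almost all $p$, hence the Kodaira--Spencer map to be nilpotent, hence — by a global argument on the base curve — the connection to be isotrivial, so $G$ is finite (this is Katz's theorem). When $G$ is solvable, or in the rank-one situation over an abelian variety, the theorems of the Chudnovsky brothers, André and Bost instead use transcendence and Diophantine-approximation estimates together with inequalities between slopes of Hodge-type filtrations. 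A general proof would need either a new global avatar of the $p$-curvature hypothesis going beyond Hodge theory, or a Diophantine argument that does not presuppose a special shape for $G$ — and producing one of these remains, for now, out of reach.
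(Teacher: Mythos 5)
You have correctly identified the essential point: the statement you were asked to prove is \cref{groth conj}, which is an open conjecture, so no complete proof exists in the paper or anywhere else, and your assessment of which implication is a theorem and which is the conjecture is accurate. Your argument for $(1)\Rightarrow(2)$ is sound and is the standard one: it is the higher-order analogue of the paper's \cref{theo: from char 0 to p}, combined with \cref{prop:cartier lemma} to translate ``full basis of rational solutions'' into ``vanishing $p$-curvature''; the key facts you use --- that the solutions live in a finite extension which spreads out and reduces well at almost all primes, and that vanishing of the $p$-curvature descends along separable extensions of the base field because the $p$-curvature is a linear map compatible with faithfully flat base change --- are exactly the ingredients needed, and the paper's order-one proof is the template you cite. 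The paper itself does not write out this direction in higher rank, so your sketch supplies something the text only implies.

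For $(2)\Rightarrow(1)$ your account matches the paper's survey in the subsection on progresses toward the conjecture (Katz's proof for Picard--Fuchs equations via horizontality of the Hodge filtration and nilpotence of the Kodaira--Spencer map; the Diophantine-approximation route of the Chudnovsky brothers, Andr\'e and Bost for rank one and for solvable neutral component). One imprecision worth flagging: the assertion that ``the $p$-curvatures, suitably normalised, generate $\operatorname{Lie}(G)$'' is not a theorem of Katz but his conjectural strengthening of \cref{groth conj} (the Lie-algebra version of the $p$-curvature conjecture); what Katz proved unconditionally is the global nilpotence theorem you invoke in your local step (regular singularities and rational exponents). Since your own text immediately concedes that this statement cannot be invoked unconditionally, the logic of your write-up is not affected, but the word ``theorem'' there should be ``conjecture''.
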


Consider the linear differential operator
\begin{equation}
\label{eq:opLcharp}
\op L =
\partial_x^n + b_{n-1}(x) {\cdot}
\partial_x^{n-1} + \cdots + b_1(x) {\cdot}\partial_x + b_0(x)
\end{equation}
with $b_{i}(x) \in \mathbb{F}_{p}(x)$. There is no straightforward generalization of Proposition \ref{prop:rationalorderone char p} for determining whether \eqref{eq:opLcharp} has a full basis of rational solutions but the criterion given by Proposition \ref{prop:cartier order 1} via the $p$-curvature does extend to higher order equations. Let us briefly explain this. 

Let $Y'+B(x)Y=0$ be the differential system associated to \eqref{eq:opLcharp}, where 
\begin{equation}
\label{eq:companionopL}
B =
\left(
\begin{array}{cccccc}
0 & -1 & 0 & \cdots & 0 & 0 \\
0 & 0 & -1 & \cdots & 0 & 0 \\
\vdots & \vdots &\vdots &\vdots &\vdots &\vdots \\
0 & 0 & 0 & \cdots & -1 & 0 \\
0 & 0 & 0 & \cdots & 0 & -1 \\
b_{0} & b_{1} & b_{2} & \cdots & b_{n-2} & b_{n-1}
\end{array}\right) \in M_{n}(\mathbb{F}_{p}(x)).
\end{equation}
Mimicking what has been done in Section \ref{sec: p-curv order 1} in the order-$1$ case,  we consider the $\mathbb{F}_{p}(x^p)$-linear map 
\begin{eqnarray*}
 \Delta : \mathbb{F}_{p}(x)^{n} &\rightarrow& \mathbb{F}_{p}(x)^{n} \\ 
 F &\mapsto& F'+B(x) F. 
\end{eqnarray*}

\begin{defi}\label{def:pcurv-general}
The map 
\[
\Delta^{p}: \mathbb{F}_{p}(x)^{n} \rightarrow \mathbb{F}_{p}(x)^{n}
\] 
is called the $p$-curvature of \eqref{eq:opLcharp}.  
\end{defi}

As in the first-order case, one can easily prove that the $p$-curvature is not
only $\mathbb{F}_{p}(x^p)$-linear, but also $\mathbb{F}_{p}(x)$-linear.
Moreover, the inductive formula \eqref{eq: inductive formula p-curv ord 1} for
computing the $p$-curvature of equations of order $1$ can be extended as
follows: the matrix $B_{p}(x)$ of the $p$-curvature with respect to the
canonical basis is given by the recurrence
\begin{equation}\label{rec for p curv}
B_{k+1}(x)=B_{k}'(x)+B(x)B_{k}(x)
\end{equation}
starting with $B_{0}(x)=B(x)$.

The following fundamental result is a generalization of 
Proposition~\ref{prop:cartier order 1} to higher order differential
equations.
We recall the fact, already mentioned at the very beginning of 
Section~\ref{sec:stat Groth conj}, that the set of solutions of a given 
$\op L \in \Fp(x)\langle \partial_x \rangle$ in $\mathbb{F}_{p}(x)$ of 
order $n$ is an $\mathbb{F}_{p}(x^{p})$-vector space of dimension at 
most $n$ and that, when this dimension is maximal, that is, equal to 
$n$, we say that $\op L$ has a full basis of rational solutions.

\begin{theo}[Cartier's lemma]\label{prop:cartier lemma}
Let $\op L \in \Fp(x)\langle \partial_x \rangle$ be a differential
operator as in equation~\eqref{eq:opLcharp}.
The following properties are equivalent:
\begin{enumerate}[label=(\arabic*),topsep=\parsep,itemsep=\parsep,parsep=0pt]
 \item \label{item 1 proof Cartier} $\op L$ has a full basis of rational solutions; 
 \item \label{item 2 proof Cartier} the $p$-curvature of $\op L$ (that is $\Delta^p$) vanishes;
 \item \label{item 3 proof Cartier} $\op L$ divides $\partial_x^p$ in $\Fp(x)\langle\partial_x\rangle$.
\end{enumerate}
\end{theo}

\begin{proof}
Let us first note that the following properties, relative to the $\mathbb{F}_{p}(x^p)$-vector space $S\coloneqq \ker(\Delta)$, are equivalent: 
\begin{enumerate}[label=\roman*)]
\item \label{item 1 claim in proof Cartier} the differential equation \eqref{eq:opLcharp} has a full basis of rational solutions; 
\item \label{item 2 claim in proof Cartier} the $\mathbb{F}_{p}(x^p)$-vector space $S$ has dimension $n$;
\item \label{item 3 claim in proof Cartier} the $\mathbb{F}_{p}(x)$-vector space $\mathbb{F}_{p}(x)^{n}$ is spanned by $S$.
\end{enumerate}

The equivalence between \ref{item 1 claim in proof Cartier} and 
\ref{item 2 claim in proof Cartier} follows immediately from the easily 
verifiable fact that the map
$$
f(x) \mapsto (f(x), f'(x), \ldots, f^{(n-1)}(x))
$$ 
induces an $\mathbb{F}_{p}(x^p)$-linear isomorphism from the 
$\mathbb{F}_{p}(x^p)$-vector space of solutions of $\op L$ in 
$\mathbb{F}_{p}(x)$ to the $\mathbb{F}_{p}(x^p)$-vector space $S$. The 
equivalence between \ref{item 2 claim in proof Cartier} and 
\ref{item 3 claim in proof Cartier} follows from the 
``Wronskian lemma'' \cite[Lemma 1.12]{PutSinger03}. 
Indeed, the wronskian lemma 
ensures that any family of elements of $S$ is linearly dependent over 
$\mathbb{F}_{p}(x^p)$ if and only if it is linearly dependent over 
$\mathbb{F}_{p}(x)$. Therefore, the dimension of the 
$\mathbb{F}_{p}(x^p)$-vector space $S$ and the dimension 
of the $\mathbb{F}_{p}(x)$-vector space spanned by $S$ are equal. Considering 
the case where one or other of these dimensions is $n$, we obtain the 
equivalence between \ref{item 2 claim in proof Cartier} and 
\ref{item 3 claim in proof Cartier}.

We are now ready to prove the theorem.

Let us first prove \ref{item 1 proof Cartier}$\Longrightarrow$\ref{item 2 proof Cartier}.
If $\op L$ has a full basis of rational solutions, then the implication \ref{item 1 claim in proof Cartier}$\Longrightarrow$\ref{item 3 claim in proof Cartier} ensures that the $\mathbb{F}_{p}(x)$-vector space $\mathbb{F}_{p}(x)^{n}$ is spanned by $S$. Since $\Delta^{p}$ is $\mathbb{F}_{p}(x)$-linear and vanishes on $S$, we have $\Delta^{p}=0$.

Let us now prove \ref{item 2 proof Cartier}$\Longrightarrow$\ref{item 1 proof Cartier}.
We assume that $\Delta^{p}=0$. We claim that the $\mathbb{F}_{p}(x)$-vector space $\mathbb{F}_{p}(x)^{n}$ is spanned by $S$.
Consider the map 
\begin{eqnarray*}
 P : \mathbb{F}_{p}(x)^{n} &\rightarrow& \mathbb{F}_{p}(x)^{n} \\ 
 F &\mapsto& \sum_{k=0}^{p-1} (-1)^{k}\frac{x^{k}}{k!} \Delta^{k}(F). 
\end{eqnarray*}
A simple calculation shows that  
\[
\Delta(P(F))=-(-x)^{p-1}\Delta^{p}(F)=0 
\]
and, hence, $P$ has values in $S$. 
But, another simple calculation shows that, for all $F \in \mathbb{F}_{p}(x)^{n}$, we have 
\[
F=\sum_{k=0}^{p-1} \frac{x^{k}}{k!} P(\Delta^{k}(F)).
\]
This shows that the  $\mathbb{F}_{p}(x)$-vector space $\mathbb{F}_{p}(x)^{n}$ is spanned by $S$ as claimed.
Now, using the implication \ref{item 3 claim in proof Cartier}$\Longrightarrow$\ref{item 1 claim in proof Cartier}, we get that $\op L$ has a full basis of rational solutions.  

It remains to prove that \ref{item 2 proof Cartier}$\Longleftrightarrow$\ref{item 3 proof Cartier}. In order to do so, we first notice that, given rational functions $f_0(x), \ldots, 
f_{n-1}(x), g_0(x), \ldots, g_{n-1}(x)$, the equality
\[\Delta\big(f_0(x), \ldots, f_{n-1}(x)\big) 
= \big(g_0(x), \ldots, g_{n-1}(x)\big)\]
is equivalent to the following congruence in $\Fp(x)\langle\partial_x
\rangle$:
\[\big(f_0(x) + \cdots + f_{n-1}(x)\partial_x^{n-1}\big) \cdot \partial_x
\equiv g_0(x) + \cdots + g_{n-1}(x)\partial_x^{n-1} \pmod{\op L}.\]
It follows from this observation that, writing $E_i = (0, \ldots, 0, 
1, 0, \ldots, 0)$ with the coordinate $1$ in $i$-th position, the 
coordinates of $\Delta^p(E_i)$ are exactly the coefficients of the 
remainder in the division of $\partial_x^{p+i}$ by $\op L$.
Hence $\Delta^p(E_i)$ vanishes if and only if $\op L$ divides
$\partial_x^{p+i}$. 
The equivalence \ref{item 2 proof Cartier}$\Longleftrightarrow$\ref{item 3 proof Cartier} follows immediately.
\end{proof}

\begin{rem}
The three assertions of \cref{prop:cartier lemma} are also equivalent to 
\begin{enumerate}[label=(\arabic{enumi}),topsep=\parsep,itemsep=\parsep,parsep=0pt]
 \item[(4)] $\op L$ admits $n$ power series solutions in $\F_p[[x]]$, linearly independent  over $\F_p((x^p))$;
 \item[(5)] $\op L$ admits $n$ polynomial solutions in $\F_p[x]$, linearly independent  over $\F_p((x^p))$.
\end{enumerate}
The implication (4) $\Longrightarrow$ (5) is proved in~\cite[Lemma~1]{Honda79}, while (5) $\Longrightarrow$ (1) and (1) $\Longrightarrow$ (4) are trivial.
Moreover, under the equivalent assertions (1)--(5), Proposition~1 in \cite{BoSc09} shows that there exists a full basis of polynomial solutions in $\F_p[x]$, each of them having degree less than~$pd$, where $d$ is the maximal degree of the numerators/denominators of the coefficients $b_i(x)$ of~$\op L$ in  \eqref{eq:opLcharp}.
\end{rem}

\begin{rem}
Assume that $\op L$ has $p$-curvature zero. An easy calculation shows that 
\[
U_0(x)=\sum_{k=0}^{p-1} (-1)^{k}\frac{x^{k}}{k!} B_k(x) \in M_n(\mathbb{F}_p(x)) 
\]
is a solution of $Y'+B(x)Y=0$. If, moreover, $B(x)$ has no pole at $0$, then $U_0(x)$ has no pole at $0$ as well and we have $U_0(0)=I_n$, so $U_0(x)$ is a fundamental matrix of rational  solutions of  $Y'+B(x)Y=0$. If $B(x)$ has a pole at $0$, then $U_0(x)$ is not necessarily invertible. Note that, more generally, if $a\in\mathbb{F}_{p}$ is not a pole of $B(x)$, then 
\[
U_a(x)=\sum_{k=0}^{p-1} (-1)^{k}\frac{(x-a)^{k}}{k!} B_k(x-a) 
\]
is a fundamental matrix of rational solutions of  $Y'+B(x)Y=0$.
\end{rem}

Putting together all that precedes, we obtain a simple algorithm to determine whether 
\eqref{eq:opLcharp} has a full basis of rational solutions: compute inductively $B_{p}(x)$ and, then, check whether $B_{p}(x)$ vanishes.
Note however that no extension of the simple formula of Theorem~\ref{theo:pcurvorder1} is known for higher order differential equations.
Roughly speaking, this is due to the fact that, contrarily to $\mathbb{F}_{p}(x)$, the
ring of $n \times n$ matrices over $\mathbb{F}_{p}(x)$ is noncommutative as soon
as $n \geq 2$. 
Computing the $p$-curvature is then much more complicated in this case but
rather efficient algorithms for this task are nevertheless available; we will
discuss them in Section~\ref{ssec:algorithms}.

Using Theorem \ref{prop:cartier lemma} (Cartier's lemma), we get the following reformulation of Grothendieck's conjecture. 

\begin{conj}[Grothendieck's conjecture in terms of $p$-curvature]\label{groth conj in terms p-curv}
For a differential operator~$\op L$ as in equation~\eqref{eq:opL},
the following properties are equivalent:
\begin{enumerate}[label=(\arabic{enumi}),topsep=\parsep,itemsep=\parsep,parsep=0pt]
 \item $\op L$ has a full basis of algebraic solutions; 
 \item for almost all primes $p$, the $p$-curvature of $\op L_{p}$ vanishes;
 \item for almost all primes $p$, $\op L_{p}$ divides $\partial_x^p$
       in the ring of differential operators $\Fp(x)\langle\partial_x\rangle$.
\end{enumerate}
\end{conj}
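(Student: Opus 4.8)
The plan is to observe that this statement is not an independent conjecture but a mechanical reformulation of Conjecture~\ref{groth conj}, obtained by rewriting its condition~(2) with the help of Cartier's lemma (Theorem~\ref{prop:cartier lemma}). Since condition~(1) is literally the same in both statements, all the work consists in checking that condition~(2) of Conjecture~\ref{groth conj} --- ``for almost all primes $p$, $\op L_p$ has a full basis of rational solutions'' --- is equivalent to conditions~(2) and~(3) of the present statement. So there is no new mathematical content to establish: the genuine work is already packaged in Theorem~\ref{prop:cartier lemma}, and what remains is bookkeeping.

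First I would make precise in what sense $\op L_p$ is defined. Writing each coefficient $a_i(x) \in \Q(x)$ of $\op L$ as a quotient of two polynomials in $\Q[x]$ and clearing denominators, only finitely many primes $p$ can occur in the denominators of the (finitely many) rational numbers involved, or can kill a denominator identically; for every other prime $p$ the reduction $\op L_p \in \F_p(x)\langle\partial_x\rangle$ is well defined, and --- the leading coefficient $1$ of $\op L$ in Eq.~\eqref{eq:opL} reducing to $1$ --- it is again a monic operator of order $n$, exactly of the shape required in Eq.~\eqref{eq:opLcharp}. This is the regime in which the three conditions of the present statement are phrased (``for almost all primes $p$''), so no issue arises from excluding this finite bad set, nor from enlarging it later: a finite modification of the set of primes does not affect the truth value of an ``almost all primes'' statement.

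Then, for each such good prime $p$, I would apply Theorem~\ref{prop:cartier lemma} verbatim to the operator $\op L_p$, which is indeed a monic operator with coefficients in $\F_p(x)$: this gives the equivalence, for that single prime, between ``$\op L_p$ has a full basis of rational solutions'', ``the $p$-curvature of $\op L_p$ vanishes'', and ``$\op L_p$ divides $\partial_x^p$ in $\F_p(x)\langle\partial_x\rangle$''. Quantifying these three mutually equivalent predicates over the good primes shows that condition~(2) of Conjecture~\ref{groth conj}, condition~(2) here, and condition~(3) here are pairwise equivalent. Combined with the tautological identity of condition~(1) in both statements, the chain $(1) \Leftrightarrow (2) \Leftrightarrow (3)$ of the present statement holds if and only if the equivalence $(1) \Leftrightarrow (2)$ of Conjecture~\ref{groth conj} holds, which is precisely what ``reformulation'' means here.

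The only point requiring any care --- and it is the main (mild) obstacle in an otherwise routine argument --- is the uniform handling of the finitely many excluded primes: one must be sure that the bad set coming from the definition of $\op L_p$, the bad set implicitly present in condition~(2) of Conjecture~\ref{groth conj}, and any bad set one might wish to enlarge along the way are each finite, so that their union is still finite and the ``almost all'' quantifier is unaffected. Once this is granted, the equivalence is immediate from Cartier's lemma applied prime by prime.
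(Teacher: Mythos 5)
Your proposal matches the paper's treatment exactly: the paper presents this statement as a reformulation of Conjecture~\ref{groth conj} obtained by applying Cartier's lemma (Theorem~\ref{prop:cartier lemma}) to each reduced operator $\op L_p$, which is precisely your prime-by-prime argument, and your extra care about the finite set of bad primes is a harmless (and correct) elaboration of what the paper leaves implicit. Nothing is missing.
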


\subsection{Progresses toward Grothendieck's conjecture} \label{ssec:progressGK}

\subsubsection{A known case: the generalized hypergeometric equations}

It is in general very difficult to determine whether a given differential
equation has a full basis of algebraic solutions. In their celebrated work~\cite{BeHe89}, Beukers
and Heckman managed to do this for an important class of differential
equations, omnipresent in the mathematical and physical literature, namely the 
\emph{generalized hypergeometric equations}. 
In fact, Beukers and Heckman extended the Landau-Errera criterion mentioned in~\S\ref{sec:hypergeom}.
Let ${\bf a} = ( a_1,\ldots, a_{s+1} )$ and ${\bf b} = ( b_1,\ldots, b_{s}, b_{s+1}=1 )$
be two $(s+1)$-tuples of rational parameters, assumed disjoint modulo~$\mathbb{Z}$.
This assumption is equivalent to the irreducibility in $\Q(x)\langle\partial\rangle$ of the ``generalized hypergeometric operator'' defined by
\[\hyp{{\bf a},{\bf b}} \coloneqq (x\partial_x+b_1-1)\cdots (x\partial_x+b_{s}-1) x\partial_x - x (x\partial_x+a_1)\cdots (x\partial_x+a_{s+1}).\] 
It is easy to check that $\hyp{{\bf a},{\bf b}}$  admits in its solution space
the generalized hypergeometric function ${}_{s+1} F_s ([a_1, \ldots, a_{s+1}], [b_1, \ldots, b_s]; x)$ defined in~\eqref{deq:sFs}.
The Beukers-Heckman result then reads as follows.

\begin{theo}[``interlacing criterion'', Beukers-Heckman, \cite{BeHe89}]
\label{theo:beukers and heckmann}
Given two $(s+1)$-tuples of rational numbers ${\bf a} = ( a_1,\ldots, a_{s+1} )$ and ${\bf b} = ( b_1,\ldots, b_{s}, b_{s+1}=1 )$, assumed to be disjoint modulo~$\mathbb{Z}$,
let $D$ be the common denominator of their elements.
Then, the following assertions are equivalent:
\begin{enumerate}
\item the hypergeometric function ${}_{s+1} F_s ([a_1, \ldots, a_{s+1}], [b_1, \ldots, b_s]; x)$ is algebraic;	
\item the operator $\hyp{{\bf a},{\bf b}}$ admits a full basis of algebraic solutions;
\item for all $1\leq \ell < D$ with $\gcd(\ell, D) = 1$ the $(s+1)$-tuples
$(e^{2 \pi i \ell a_j})_{1 \leq j \leq s+1}$ and $(e^{2 \pi i \ell b_j})_{1 \leq j \leq s+1}$
interlace on the unit circle.
\end{enumerate}
\end{theo}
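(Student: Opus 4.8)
The plan is to reduce the whole statement to a question about the monodromy group, and then to invoke the explicit description of that group due to Levelt and to Beukers--Heckman. The implication $(2)\Rightarrow(1)$ is immediate, since ${}_{s+1}F_s([a_1,\ldots,a_{s+1}],[b_1,\ldots,b_s];x)$ is one of the solutions of $\hyp{{\bf a},{\bf b}}$. For $(1)\Rightarrow(2)$ I would use that the disjointness of ${\bf a}$ and ${\bf b}$ modulo $\Z$ makes $\hyp{{\bf a},{\bf b}}$ irreducible, hence the monodromy representation of its solution sheaf on $\P^1\smallsetminus\{0,1,\infty\}$ is irreducible; if the single solution ${}_{s+1}F_s$ is algebraic over $\C(x)$ then so are all its analytic continuations (they are roots of the same minimal polynomial), so their $\C$-span is a nonzero monodromy-stable subspace of the solution space, hence the whole space, so a full basis of algebraic solutions exists. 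Since $\hyp{{\bf a},{\bf b}}$ is Fuchsian, having a full basis of algebraic solutions is equivalent to having a finite monodromy group. So the theorem reduces to the assertion that the monodromy group $G$ of $\hyp{{\bf a},{\bf b}}$ is finite if and only if condition $(3)$ holds.

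To get a handle on $G$, I would invoke Levelt's rigidity theorem for hypergeometric local systems: $G$ is generated by local monodromies $h_0,h_1,h_\infty$ with a product relation $h_\infty h_1 h_0=\id$, where (in a suitable normalization of the parameters) $h_0$ and $h_\infty$ are semisimple with characteristic polynomials $\prod_j(t-e^{2\pi i b_j})$ and $\prod_j(t-e^{2\pi i a_j})$, the element $h_1$ is a pseudo-reflection (identity plus a rank-one operator), and the triple $(h_0,h_1,h_\infty)$ is \emph{rigid}, i.e. determined up to simultaneous conjugacy by these data. A crucial consequence of the rigid presentation is that $G$ can be realized over the cyclotomic field $K=\Q(\zeta_D)$, with an integral structure at all places except possibly those dividing $D$.

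The bridge to condition $(3)$ is a Hermitian form. Since the parameters are real, the contragredient of the monodromy representation and its complex conjugate have the same local monodromy data, so by rigidity they are isomorphic; hence $G$ preserves a nondegenerate Hermitian form $F$, unique up to a real scalar by irreducibility. I would then use the explicit signature formula of Beukers--Heckman: the signature of $F$ is read off from the cyclic order of the points $e^{2\pi i a_j}$ and $e^{2\pi i b_j}$ on the unit circle, and $F$ turns out to be definite precisely when the two sets interlace. This gives $(2)\Rightarrow(3)$: if $G$ is finite it preserves some positive-definite Hermitian form (average any inner product over $G$), which by uniqueness must be a real multiple of $F$, so $F$ is definite and interlacing holds for $\ell=1$; applying the Galois automorphism $\sigma_\ell\colon\zeta_D\mapsto\zeta_D^\ell$ carries the monodromy of $\hyp{{\bf a},{\bf b}}$ to that of the hypergeometric operator with parameters $\ell a_j,\ell b_j$, preserving finiteness and irreducibility, so the same argument yields interlacing for every $\ell$ coprime to $D$.

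The reverse implication $(3)\Rightarrow(2)$ is where the real work lies, and I expect it to be the main obstacle. Assuming $(3)$, for each $\ell$ coprime to $D$ the conjugate group $\sigma_\ell(G)$ preserves the definite form $F_\ell$, hence sits inside a compact unitary group; so $G$ is bounded at every archimedean place of $K$. Combined with the integral structure of the representation --- which controls the finite places, the local eigenvalues being roots of unity, modulo a separate and easy argument at the primes dividing $D$ --- this forces the traces of elements of $G$ to be algebraic integers of $K$ all of whose conjugates have absolute value $\le n$; such a set is finite, and a finitely generated complex linear group with finitely many distinct traces is finite by the classical theorem of Burnside--Schur. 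Hence $G$ is finite, $\hyp{{\bf a},{\bf b}}$ has a full basis of algebraic solutions, and $(2)$ (hence $(1)$) follows. The two technically delicate points, both carried out in~\cite{BeHe89}, are the exact computation of the signature of $F$ and the verification that the hypergeometric monodromy carries an integral structure at all places, so that archimedean boundedness alone is enough to force finiteness; for $s=1$ this recovers the classical Landau--Errera interlacing criterion.
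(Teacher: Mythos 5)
The paper does not prove this theorem: it is stated as a cited result of Beukers--Heckman~\cite{BeHe89}, so there is no internal proof to compare against. Your sketch is a faithful and essentially correct outline of the original argument of~\cite{BeHe89}: the reduction of $(1)\Leftrightarrow(2)$ to irreducibility of the monodromy representation, the further reduction (via Fuchsianity) to finiteness of the monodromy group, Levelt's rigid presentation by companion matrices and a pseudo-reflection, the invariant Hermitian form whose signature is computed from the cyclic order of the eigenvalues, the Galois twist $\sigma_\ell$ sending the parameters to $\ell a_j,\ell b_j$, and the finiteness criterion (integral entries in $\Z[\zeta_D]$ plus boundedness at every archimedean place) are exactly the ingredients of their proof; your trace-based appeal to Burnside--Schur is a harmless variant of their ``discrete subgroup of a compact group'' conclusion, and your caution about primes dividing $D$ is in fact unnecessary since the Levelt generators have entries that are symmetric functions of roots of unity, hence genuine algebraic integers with unit determinant. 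You rightly flag that the two computational cores --- the signature formula for the Hermitian form and the integrality of the rigid model --- are deferred to~\cite{BeHe89}, so this is a correct proof architecture rather than a self-contained proof. It is worth noting that the paper points to a genuinely different, arithmetic route: Katz's computation in \cite[\S 5.5]{Katz90} identifies the operators $\hyp{{\bf a},{\bf b}}$ whose $p$-curvatures vanish for almost all $p$ and, combined with his proof of Grothendieck's conjecture for such operators, reproves the equivalence $(2)\Leftrightarrow(3)$ without any monodromy or Hermitian-form analysis; that approach buys a verification of \cref{groth conj} in this family, whereas the transcendental approach you follow yields in addition the explicit classification of the finite hypergeometric monodromy groups.
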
  

The interlacing condition mentioned in the latter result is defined as follows.  We say that two $(s+1)$-tuples $(u_j)_{1 \leq j \leq s+1}$ and $(v_j)_{1 \leq j \leq s+1}$ of elements of the unit circle interlace if, up to renumbering the $u_j$ and the $v_j$, we have $u_1=e^{2 \pi i \alpha_1},\ldots,u_{s+1}=e^{2 \pi i \alpha_{s+1}}$ and $v_1=e^{2 \pi i \beta_1},\ldots,v_{s+1}=e^{2 \pi i \beta_{s+1}}$ with either 
$$
0\leq \alpha_1 < \beta_1 < \alpha_2 < \beta_2 \leq \cdots < \alpha_{s+1} < \beta_{s+1} <1
$$
or
$$
0\leq \beta_1 < \alpha_1 < \beta_2 < \alpha_2 \leq \cdots < \beta_{s+1} < \alpha_{s+1} <1. 
$$

\begin{ex}
The Beukers-Heckman criterion immediately implies that the
operator
$\hyp{{\bf a},{\bf b}}$ admits a full basis of algebraic solutions for the choice
\[{\bf a} = \left\{ \frac{1}{30}, \frac{7}{30}, 
\frac{11}{30}, \frac{13}{30}, \frac{17}{30}, 
\frac{19}{30}, \frac{23}{30}, \frac{29}{30} \right\},
\quad
{\bf b} = \left\{
\frac15, \frac13, \frac25, \frac12, \frac35, \frac23, \frac45, 1 \right\}.\]
This proves in particular a beautiful observation due to Fernando Rodriguez-Villegas, namely that the generating function $\sum_{n \geq 0} u_n x^n $ of the  
sequence 
\[ u_n \coloneqq \frac{(30n)!n!}{(15n)!(10n)!(6n)!} \]
(used by Chebyshev in his work on estimates for the prime counting function) is algebraic.
\end{ex}

Note that without the irreducibility assumption on $\hyp{{\bf a},{\bf b}}$, the situation is much more subtle.
For instance, 
${}_2F_1([1/2,1/3],[3/2];x)$ is transcendental, while
${}_2F_1([3/2,1/3],[1/2];x)$ is algebraic.
In a recent work by Fürnsinn and Yurkevich~\cite{FuYu23},
a generalization of \cref{theo:beukers and heckmann}
is given, which allows to decide algebraicity/transcendence of arbitrary generalized hypergeometric functions (with potentially irrational parameters).

In addition to~\cref{theo:beukers and heckmann}, Beukers and Heckman also drew up in \cite{BeHe89} the list of generalized
hypergeometric equations having a full basis of algebraic solutions,
thus extending Schwarz's classification of algebraic ${}_2F_1$'s~\cite{Schwarz1873}.

On the other hand, a calculation due to Katz in \cite[\S5.5]{Katz90} (also in \cite[\S6]{Katz72} for the specific case of ${}_2F_1$'s) shows that this list coincides with the list of generalized hypergeometric equations whose reductions modulo $p$ have a full basis of rational solutions for almost all primes $p$, in accordance with Grothendieck's conjecture.

\subsubsection{State of the art on Grothendieck's conjecture}

Besides for order-$1$ equations and for generalized hypergeometric equations, Grothendieck's conjecture has been proved in several particular cases. 

On the one hand, for Picard-Fuchs differential
equations (satisfied by periods of a family of smooth algebraic varieties),
and more generally for certain direct factors, Grothendieck's conjecture was established by Katz~\cite{Katz72}. As an application, Katz gave in \cite[Theorem~5.5.3]{Katz90} a new proof of the aforementioned results of Beukers and Heckman~\cite{BeHe89} 
about the  generalized hypergeometric equations.  Katz~\cite[\S1]{Katz72}, and later
Andr\'e~\cite[\S{III}]{Andre04}, related the $p$-curvatures to the reduction modulo $p$
of the 
\emph{Kodaira-Spencer map}. (See also
Foucault~\cite{Foucault92} and Foucault and Toffin~\cite{FoTo07} for explicit
computations for families of curves of genus~2 and~3.) 
As explained in~\cite[p.~108]{Andre04}, this approach has a
potential of delivering effective versions of Grothendieck's conjecture,
similar to effective versions of Chebotarev's density theorem~\cite{LaOd77,Serre81}: the hope is to obtain, for instance for any Picard-Fuchs operator~$\op L$, an integer $N(\op L)$ such that  
the fact that 
$\op L$ has a full basis of algebraic solutions can be read on the $p$-curvatures of $\op L$ for the primes $p<N(\op L)$.

On the other hand, an arithmetic approach to Grothendieck's conjecture was introduced by the Chudnovsky
brothers~\cite{ChCh85} who proved Grothendieck's conjecture for any rank one
linear homogeneous differential equation over an algebraic curve~\cite[Theorem 8.1]{ChCh85} (the case of
first order equations over $\P^1$ had been proved by Honda in~\cite[\S1]{Honda79}, see Theorem~\ref{theo:Honda}).
They also proved Grothendieck's conjecture for the class of Lamé equations~\cite[Theorem 7.2]{ChCh85}, of the form
\[p(x)y''(x) + \frac12 p'(x) y'(x) - \big(n(n+1)x+B\big){\cdot}y(x)=0\]
where $n\in\N$, $B\in\Q$ and $p(x)\in\Q[x]$ has degree 3.
The arithmetic approach was extended by Andr\'e to the case when the differential Galois
group has a solvable neutral component~\cite{Andre04} (see also~\cite{Andre87}, 
\cite[Chap.~VIII]{Andre89}, \cite[Thm.~2.9]{Bost01} and~\cite[Thm.~3.5]{Chambert-Loir02}).

Katz~\cite{Katz82} proposed a conjectural description of the differential Galois group in terms of $p$-curvatures and he proved that his conjecture is equivalent to the initial conjecture by Grothendieck.
	
Using the language of schemes and sheaves, Grothendieck's conjecture 
can be formulated more generally for differential equations over any
algebraic smooth curve defined over a number field.
In \cite[Remark~7.1.4]{Andre04}, André noticed 
that, using Belyi maps, one can reduce the general case to that of
the curve $X = \P^1\backslash\{0, 1, \infty\}$. In our setting, this
means that one can safely assume that the differential operator $\op L$
has only singularities at $0$, $1$ and $\infty$.
Under this additional assumption, Tang~\cite{Tang18} proves that if 
\emph{all}\footnote{When $\op L$ does not reduce properly at a prime
$p$, the $p$-curvature of $\op L_p$ is \emph{a priori} not defined; 
however Tang manages to give an alternative definition of the vanishing 
of the $p$-curvature, see~\cite[Definition~2.1.7]{Tang18}.} the 
$p$-curvatures 
of $\op L$ vanish, then $\op L$ has a full basis of \emph{rational}
solutions. Although this latter result differs from Grothendieck's 
in the hypotheses (which are stronger) and the conclusion (which is 
also stronger), it is closely related.

We also point out the work of Bost in \cite{Bost01} giving an  algebraicity criterion for leaves of algebraic foliations defined over number fields. For additional details, we refer to \cite{Chambert-Loir02}.
We mention the work of van der Put in \cite{Put01} concerned with inhomogeneous equations of order $1$.  
Other special cases of the conjecture have been proven recently, see~\cite{FaKi09,Shankar18,PaShWh21}.
Last but not least, an analogue of Grothendieck's conjecture for $q$-difference equations was conjectured by Bézivin~\cite[\S5]{Bezivin91} and 
proved by Di Vizio in~\cite{DiVizio02}.

\subsection{A formal parallel with Kronecker's theorem}

It is instructive to observe that Grothendieck's conjecture appears to be, in some sense, a
differential version of Kronecker's theorem we have already encountered earlier (see
Theorem~\ref{theo:kronecker}). Indeed, Kronecker's theorem can be reformulated as follows.

\begin{theo}
\label{theo:kronecker2}
For a separable polynomial $L \in \Q[x]$,
the following conditions are equivalent:
\begin{enumerate}[label=(\arabic{enumi}),topsep=\parsep,itemsep=\parsep,parsep=0pt]
\item all the roots of $L$ are in $\Q$;
\item for almost all primes~$p$, all the roots of $L \text{ mod } p$
are in $\Fp$;
\item for almost all primes~$p$, we have $X^p \equiv X \pmod{L,p}$.
\end{enumerate}
\end{theo}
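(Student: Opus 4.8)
The plan is to dispatch the easy implications $(1)\Rightarrow(3)\Rightarrow(2)$ by elementary manipulations with the polynomial $x^p-x$, and then to obtain the substantive implication $(2)\Rightarrow(1)$ from the already-stated Theorem~\ref{theo:kronecker}. Throughout, I would first discard the finitely many bad primes: we may assume $L$ nonconstant, write $c\in\Q^\times$ for its leading coefficient, and note that $\operatorname{disc}(L)$ is a nonzero rational number because $L$ is separable. Then for every prime $p$ dividing neither the numerator nor the denominator of $c$, nor the numerator of $\operatorname{disc}(L)$, the reduction $L \bmod p$ is a polynomial of the same degree that remains separable over $\F_p$; for such ``good'' primes it makes sense to speak of the roots of $L \bmod p$.

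For $(1)\Rightarrow(3)$: if the roots $r_1,\dots,r_n$ of $L$ all lie in $\Q$, then for almost all $p$ they are $p$-integral with pairwise distinct reductions $\bar r_i\in\F_p$ (distinctness being forced anyway by separability of $L\bmod p$ at a good prime), so $L \bmod p$ equals $\bar c\prod_i(x-\bar r_i)$ and therefore divides $x^p-x=\prod_{a\in\F_p}(x-a)$ in $\F_p[x]$; this divisibility is exactly the congruence $X^p\equiv X\pmod{L,p}$. For $(3)\Rightarrow(2)$: the congruence says $L \bmod p$ divides $\prod_{a\in\F_p}(x-a)$, so each root of $L \bmod p$ is one of the elements $a\in\F_p$, which is $(2)$. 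The same circle of ideas gives $(2)\Rightarrow(3)$ directly (for a good prime $p$, $L \bmod p$ is separable, so if all its roots lie in $\F_p$ it divides the separable polynomial $x^p-x$), so $(2)$ and $(3)$ are genuinely equivalent.

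The heart of the matter is $(2)\Rightarrow(1)$. Here I would factor $L$ over $\Q$ into monic irreducible factors $L=c\,L_1\cdots L_s$, which are pairwise distinct since $L$ is separable. For each fixed $j$, assertion $(2)$ says in particular that, for almost all $p$, every root of $L_j \bmod p$ lies in $\F_p$; a fortiori $L_j \bmod p$ has a zero in $\F_p$ for almost all $p$. Since $L_j$ is irreducible in $\Q[x]$, Theorem~\ref{theo:kronecker} forces $\deg L_j=1$. Applying this to every $j$ shows that $L$ splits into linear factors over $\Q$, i.e.\ all roots of $L$ lie in $\Q$, which is $(1)$.

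The only real obstacle is Theorem~\ref{theo:kronecker} itself, which we are allowed to invoke; for completeness its proof goes through the Frobenius (or Chebotarev) density theorem, combined with Jordan's lemma that a transitive subgroup of $S_n$ with $n\geq 2$ always contains a fixed-point-free permutation, so that an irreducible $L_j$ of degree $\geq 2$ would fail to have a root modulo a positive-density set of primes, contradicting the hypothesis. Granting that input, everything else above is routine bookkeeping with $x^p-x$, and no serious difficulty remains.
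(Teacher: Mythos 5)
Your proposal is correct, and it follows exactly the route the paper intends: the paper states Theorem~\ref{theo:kronecker2} as a reformulation of Theorem~\ref{theo:kronecker} without writing out the verification, and your argument supplies precisely that — the elementary equivalences $(1)\Leftrightarrow(3)\Leftrightarrow(2)$ via divisibility of $X^p-X$ at the good primes, and the reduction of $(2)\Rightarrow(1)$ to Kronecker's theorem by factoring $L$ into irreducibles. The only (harmless) bookkeeping omission is that the set of good primes should also exclude those dividing denominators of the non-leading coefficients of $L$, which still leaves almost all primes.
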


It is striking that the three conditions of Theorem~\ref{theo:kronecker2} are formal analogues
of the conditions of Conjecture~\ref{groth conj}, at least if we admit that algebraic solutions
in the differential case correspond to rational solutions in the algebraic case. Besides, the
fact that the condition $X^p \equiv X \pmod{L,p}$ translates to $\partial_x^p \equiv 0
\pmod{\op L, p}$, \emph{i.e.} that the right-hand side shifts from $X$ to $0$, is explained by
the fact that the classical Frobenius map behaves ``multiplicatively'' (it belongs naturally to some
Galois group) while the $p$-curvature behaves ``additively'' (it belongs naturally to some Lie
algebra).

In the classical setting, Kronecker's theorem is obtained as a corollary of Chebotarev's
density theorem, which is itself proved by means of Artin's $L$-functions. Unfortunately,
similar tools do not seem to be available so far in the differential context; developing them
might then sound as an exciting project.

As mentioned above, Honda proved that the Grothendieck conjecture for first order
differential equations is equivalent to Kronecker's theorem. In
\cite[\S4]{ChCh85}, the Chudnovsky brothers gave an elementary (although
``extravagant'') proof of these equivalent statements; their approach is based
on Hermite's explicit Hermite-Padé approximants to binomial functions. More
precisely, they proved that if $y'(x) = \frac{x}{\alpha} y(x)$ has zero
$p$-curvature for almost all primes~$p$, then for all primes ideals
$\mathfrak{P}$ of $\Q(\alpha)$ all the binomial coefficients
$\binom{\alpha}{n}$ are $\mathfrak{P}$-integral for all $n$. From there, it is
shown that Hermite-Padé approximants to $1, x^\alpha, \ldots, x^{(m-1)\alpha}$
at $x=1$ with weights $(N,\ldots,N)$ are trivial for large $m$ and~$N$. This in
turn implies that $1, x^\alpha, \ldots, x^{(m-1)\alpha}$ are linearly dependent
over $\Q(x)$, that is $x^\alpha$ is an algebraic function, which is equivalent
to $\alpha\in\Q$.

\section{About the computation of the $p$-curvature}
\label{ssec:algorithms}

After Cartier's lemma (Theorem~\ref{prop:cartier lemma}) and 
Grothendieck's conjecture (Conjecture~\ref{groth conj}), it is clear that the $p$-curvature
is an invariant of primary importance of linear differential equations
in characteristic~$p$.
In this section, we outline some algorithms for computing it (or
other quantities associated to it) efficiently.

\subsection{Operators of order $1$}

In the case of differential equations of order~$1$ of the form
\eqref{eq order 1 char p for p curv}:
\[y' + b(x) y = 0\]
we have seen in Theorem~\ref{theo:pcurvorder1} that the $p$-curvature is 
explicitly given by the formula 
\[b_p(x) = b^{(p-1)}(x) + b(x)^p.\]
Furthermore,
computing explicitly the latter is quite an easy task which directly reduces
to writing down the partial fraction decomposition of $b(x)$.
Indeed, we have seen that if $b(x)$ decomposes as
\[b(x) = P(x) + \sum_{i=1}^m \sum_{j=1}^{r_i} \frac{\alpha_{i,j}}{(x-a_i)^j}\]
then
\[b_p(x) = P^{(p-1)}(x) + P(x)^p 
- \sum_{i=1}^m \sum_{\substack{1 \leq j \leq r_i \\ j \equiv 1 \text{ mod } p}}
\frac{\alpha_{i,j}}{(x-a_i)^{j+p-1}}
+ \sum_{i=1}^m \sum_{j=1}^{r_i} \frac{\alpha_{i,j}^p}{(x-a_i)^{pj}}.\]
Importantly for algorithmic purposes, we observe that the size of
$b_p(x)$ is roughly the same as the size of the input $b(x)$, although
the degree of (the numerator and the denominator of) the former is $p$
times larger the degree of the latter.
This apparent contradiction is explained by the fact that $b_p(x)$ is
actually a function of $x^p$; it is then a sparse rational function,
in the sense that a large proportion of its coefficients vanish.

\begin{rem}
\label{rem:algo pcurv order 1}
Another option for computing explicitly the $p$-curvature of 
differential operators of order~$1$ is presented 
in~\cite[Thm.~2]{BoSc09}; it avoids the computation of partial
fraction decomposition and the factorization of the denominator of $b(x)$.
Let us briefly sketch it with the differential operator
$$\partial_x - \frac{1}{x^2 + 1}$$
of Example~\ref{ex exp-arctan}.
We write $b(x) = -1/(x^2 + 1)$ and assume $p > 2$ for simplicity.
In order to compute $b_p(x)$, we expand $b(x)$ in power series:
\[b(x) = - \sum_{n=0}^\infty (-1)^n x^{2n}.\]
The $(p{-}1)$-st derivative of $x^{2n}$ is $0$ when $2n \not\equiv
-1 \pmod p$, and it is $-x^{2n-p+2}$ otherwise thanks to Wilson's
theorem. Writing $2n = p - 1 + pk$ and noticing that $k$ has to be
even, $k = 2\ell$, we end up with
\[b^{(p-1)}(x) = - \sum_{\ell = 0}^\infty (-1)^{\ell - \frac{p-1}2} x^{2\ell p}\]
On the other hand, it is clear that
\( b(x)^p = - \sum_{n = 0}^\infty (-1)^n x^{2np}.\)
Adding both sums, we find
\[b_p(x) = - \sum_{n = 0}^\infty (-1)^n \cdot \left(1 - (-1)^{\frac{p-1}2}\right) \cdot x^{2np}.\]
When $p \equiv 1 \pmod 4$, the exponent $\frac {p-1} 2$ is even and the term in the parenthesis vanishes. Therefore $b_p(x) = 0$ in this case.
On the contrary, when $p \equiv 3 \pmod 4$, we have
\[b_p(x) = - 2 \cdot \sum_{n = 0}^\infty (-1)^n \cdot x^{2np} 
= - \frac 2{1+x^{2p}}
= - \frac 2 {(1+x^2)^p}.
\]
and we recover the result of Example~\ref{ex:exp-arctan2}.

The same idea applies actually to any differential operator $\op L
= \partial_x - b(x)$. Indeed, as already noticed its $p$-curvature is
a rational function in $x^p$. Besides, it is of the form
$f(x^p)/\text{denom}(b(x))^p$, where $f(x)$ is 
a polynomial of degree at most $d=\deg(b(x))$. Hence, it is enough to 
determine the power series expansion of $(b(x)^{(p-1)})^{1/p}$ at 
precision $x^d$, starting from the power series expansion of $b(x)$ at 
the same precision~$d$.
If $b(x) = \sum_{n \geq 0} u_n x^n$, then by Wilson's theorem we have
$(b(x)^{(p-1)})^{1/p} = -\sum_{n \geq 1} u_{np-1} x^{n-1}$, and hence it is
enough to be able to compute the terms $u_{p-1}, \ldots, u_{d p-1}$.
Since $b(x)$ is a rational function, the sequence $(u_n)_{n\geq 0}$ satisfies a
linear recursion of order at most~$d$, with coefficients in $\F_p$ (given by
the coefficients of $\text{denom}(b)$). As the $N$-th term of such a linear
recurrence with \emph{constant} coefficients can be computed using $O(d \log
(d)\log(N))$ operations in $\F_p$ using the technique of \emph{binary powering} 
combined with fast polynomial multiplication in $\F_p[x]$,
we conclude that the $p$-curvature $b_p(x)$ can be computed by an algorithm that
uses $O(d^2 \log(d) \log(p))$ operations in~$\F_p$.

Note that the reasoning above shows that the $p$-curvature $b_p(x)$ of 
$\op L = \partial_x - b(x)$ is zero if and only if the following 
infinite systems of congruences holds
\[ u_n \equiv u_{(n+1)p-1}\pmod p \quad \text{for all} \; n \geq 0.\]
\end{rem}

\subsection{Reading the $p$-curvature on the solutions}
\label{ssec:pcurvandsol}

For differential equations of higher orders, the sparsity of the 
$p$-curvature no longer holds in general. However, we have the following 
result.

\begin{prop}
\label{prop:sizepcurv}
Let 
\[\op L =
\partial_x^n + b_{n-1}(x) {\cdot}
\partial_x^{n-1} + \cdots + b_1(x) {\cdot}\partial_x + b_0(x)\]
with $b_{i}(x) \in \mathbb{F}_{p}(x)$ and let $B(x)$ be the associated
companion matrix (see equation~\eqref{eq:companionopL}).
Let $f(x) \in \Fp[x]$ be a common denominator of the $b_i(x)$'s
and
\[d = \max\big(\deg f(x),\, \deg(f(x)b_0(x)),\, \ldots,\, 
  \deg(f(x) b_{n-1}(x))\big).\]
Let $B_p(x)$ be the matrix of the $p$-curvature of $\op L$
defined by the recurrence~\eqref{rec for p curv}. Then, the following 
holds.
\begin{itemize}
\item[(i)] The matrix $B_p(x)$ has the form $\frac 1{f(x)^p} C_p(x)$ where
the entries of $C_p(x)$ are all polynomials of degree at most $dp$.
\item[(ii)] The matrix $B_p(x)$ is similar to a matrix with coefficients
in $\Fp(x^p)$.
\end{itemize}
\end{prop}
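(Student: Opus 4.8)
My plan is to handle the two items separately, both by an induction on $k$ using the defining recurrence $B_{k+1}(x) = B_k'(x) + B(x)B_k(x)$, and then to specialize to $k=p$.

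For item (i), I would prove by induction on $k$ the sharper statement that $B_k(x) = \frac{1}{f(x)^k}\,C_k(x)$ where every entry of $C_k(x)$ is a polynomial of degree at most $dk$. The base case $k=0$ is immediate since $B_0(x)=B(x)=\frac{1}{f(x)}\big(f(x)B(x)\big)$ and by definition of $d$ each entry of $f(x)B(x)$ has degree at most $d$ (the entries of $B(x)$ are either $0$, $-1$, or one of the $b_i(x)$). For the inductive step, write $B_k = C_k/f^k$ and compute
\[
B_k' = \frac{C_k' f^k - k f^{k-1} f' C_k}{f^{2k}} = \frac{C_k' f - k f' C_k}{f^{k+1}},
\]
so that
\[
B_{k+1} = B_k' + B B_k = \frac{C_k' f - k f' C_k + (fB)\,C_k}{f^{k+1}} =: \frac{C_{k+1}}{f^{k+1}}.
\]
Now I bound degrees: $C_k'f$ has degree at most $(dk-1)+\deg f \le dk + \deg f - 1 \le d(k+1)$ since $\deg f\le d$; $k f' C_k$ has degree at most $(\deg f - 1) + dk \le d(k+1)$; and each entry of $(fB)C_k$ is a sum of products of an entry of $fB$ (degree $\le d$) with an entry of $C_k$ (degree $\le dk$), hence degree $\le d(k+1)$. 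So every entry of $C_{k+1}$ has degree at most $d(k+1)$, completing the induction; taking $k=p$ gives (i).

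For item (ii), the key input is the already-established fact (Cartier's lemma, Theorem~\ref{prop:cartier lemma}, together with the discussion of $\Delta^p$ preceding Definition~\ref{def:pcurv-general}) that the $p$-curvature $\Delta^p$ is $\mathbb{F}_p(x)$-\emph{linear}, i.e.\ the matrix $B_p(x)$ represents an $\mathbb{F}_p(x)$-linear endomorphism of $\mathbb{F}_p(x)^n$. The cleanest route is the observation already used in the proof of Cartier's lemma: via the companion form, $\Delta^p(E_i)$ records the coefficients of the remainder of $\partial_x^{p+i}$ modulo $\op L$. Pick any point $a\in\overline{\mathbb{F}_p}$ (or pass to a finite extension; it suffices to exhibit the similarity over $\overline{\mathbb{F}_p}(x)$ and then descend) that is not a pole of $B(x)$; by the remark following the proof of Cartier's lemma, the matrix
\[
U_a(x) = \sum_{k=0}^{p-1} (-1)^k \frac{(x-a)^k}{k!}\, B_k(x-a)
\]
is invertible at $x=a$ — more precisely one checks $U_a(a) = I_n$ using that $B_k(x-a)$ evaluated at $x=a$ is $B_k(0)$ shifted, but the $k=0$ term is $I_n$ and the others vanish at $x=a$ — so $U_a(x)$ is an invertible matrix over $\mathbb{F}_p(x)$ (its determinant is a nonzero rational function). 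A direct computation, differentiating $U_a$ and using the recurrence $B_{k+1}=B_k'+BB_k$, shows $U_a$ satisfies $U_a' + B\,U_a = 0$ (this is the gauge-transformation / fundamental-solution identity already invoked in the Remark). Conjugating the curvature by this gauge transformation: if $Y' + BY = 0$ has fundamental matrix $U_a$, then in the gauge $\widetilde Y = U_a^{-1} Y$ the connection becomes trivial and the iterated operator $\Delta^p$ is carried to $U_a^{-1} B_p(x) U_a$, which by construction is the matrix of $\partial_x^p$ in the trivial connection, i.e.\ it has entries in $\mathbb{F}_p(x^p)$ (because $\partial_x$ applied $p$ times to monomials, and the structure of $\mathbb{F}_p(x)$ as a module over the constants $\mathbb{F}_p(x^p)$, produces only $p$-th powers). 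Hence $B_p(x)$ is similar over $\mathbb{F}_p(x)$ (after possibly enlarging $\mathbb{F}_p$, then descending by a standard Galois-descent argument since $B_p(x)$ itself is defined over $\mathbb{F}_p(x)$) to a matrix with coefficients in $\mathbb{F}_p(x^p)$.

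The routine degree bookkeeping in (i) is harmless; the real subtlety is in (ii), namely making the ``change of gauge trivializes the connection, hence conjugates $B_p$ into the constants'' argument fully rigorous — in particular checking that $U_a(x)$ is genuinely invertible as a matrix over $\mathbb{F}_p(x)$ (not merely at the point $a$) and that the resulting conjugate really has entries in $\mathbb{F}_p(x^p)$ rather than just in $\mathbb{F}_p(x)$. I expect this identification of the conjugated matrix with ``the $p$-curvature of the trivial connection,'' together with the descent from $\overline{\mathbb{F}_p}$ to $\mathbb{F}_p$ when no $\mathbb{F}_p$-rational non-pole point is available, to be the main obstacle; once one knows $\Delta^p$ is $\mathbb{F}_p(x)$-linear and admits a rational fundamental solution everything else is formal.
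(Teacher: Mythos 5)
Your item (i) is fine: the induction $B_k(x)=C_k(x)/f(x)^k$ with $\deg C_k\le dk$ goes through exactly as you write it. (One bookkeeping point: for the exponent to come out as $f^p$ and the bound as $dp$, the recurrence must be initialized at $B_0=I_n$, equivalently $B_1=B$; the text's ``starting with $B_0(x)=B(x)$'' is inconsistent with its own later assertion $U_0(0)=I_n$, and your base case silently uses the correct normalization. With $B_0=B$ taken literally you would get $f^{p+1}$ and $d(p+1)$.) The paper states the proposition without proof, so there is nothing to compare against on this point.

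Item (ii) is where the proposal genuinely fails, and the gap is not the technical subtlety you flag at the end but the very first step. The matrix $U_a(x)$ is a fundamental matrix of solutions \emph{only} under the hypothesis, stated explicitly in the Remark you are invoking, that the $p$-curvature vanishes. The same telescoping computation as in the proof of Theorem~\ref{prop:cartier lemma} gives, for $a=0$,
\[
U_0'(x) + B(x)\,U_0(x) \;=\; -(-x)^{p-1}\,B_p(x),
\]
which is zero if and only if $B_p=0$ --- precisely the case in which assertion (ii) is trivial. In the interesting case $B_p\neq 0$, Cartier's lemma says there is \emph{no} fundamental matrix of rational solutions at all, so the gauge transformation you want does not exist over $\Fp(x)$; nor can it be recovered by enlarging the base field (the derivation $d/dx$ does not extend to purely inseparable extensions of $\Fp(x)$, and over separable extensions Cartier's lemma still applies). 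There is also a quick sanity check that the argument cannot work as stated: the $p$-curvature of the trivial connection $Y'=0$ over $\Fp(x)$ is $0$, because $(d/dx)^p=0$ on $\Fp(x)$; so your conclusion would read ``$B_p$ is similar to $0$'', i.e.\ $B_p=0$ for every operator, which is absurd. The correct starting point for (ii) is the identity $B_p'=B_pB-BB_p$, which expresses that $\Delta^p$ commutes with $\Delta$ (equivalently, that $\partial_x^p$ is central in $\Fp(x)\langle\partial_x\rangle$); from this horizontality one deduces the similarity to a matrix over $\Fp(x^p)$ either by working with the divided-power fundamental matrix $S^{\DP}$ of Lemma~\ref{lem:dpSDP} (which lives in $\Fp[[x-a]]^{\DP}$, not in $\Fp(x)$, and is the object that actually plays the role you wanted $U_a$ to play), or by the descent arguments of~\cite{Put96} and~\cite{BoCaSc16}.
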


The first assertion of the proposition follows easily from the 
induction formula~\eqref{rec for p curv} (see~\cite[Lemma~1]{BoSc09}).
The assertion~(ii) is more subtle and requires the construction of a
differential extension of $\mathbb F_p(x)$ over which $\op L$ has a full 
basis of solutions. We refer to~\cite[Proposition~2.1.2]{Dwork90} for a
detailed proof.

Besides, we notice that (ii)~implies that the trace, the determinant 
of $B_p(x)$ and, more generally, all the coefficients of its 
characteristic polynomial lie in $\Fp(x^p)$. This latter statement 
can be also seen as a consequence of the following easy lemma: the
determinant of a linear mapping $\Fp(x)^n \to \Fp(x)^n$ commuting
with $\nabla : F \mapsto F' + B(x) F$ has zero derivative. This
alternative proof has the advantage to avoid going to an extension.
We derive from what precedes that the sizes of the coefficients 
of the characteristic polynomial of $B_p(x)$ are comparable to the sizes 
of the $b_i(x)$'s, although the size of the $p$-curvature itself is in 
general $p$ times larger.

In order to design fast algorithms for computing the $p$-curvature, 
it is useful to go beyond Cartier's lemma 
(see~\cref{prop:cartier lemma}) and relate the $p$-curvature
to the shape of solutions. Let
\[\op L = \partial_x^n + b_{n-1}(x) {\cdot} 
\partial_x^{n-1} + \cdots + b_1(x) {\cdot}\partial_x + 
b_0(x)\] 
be a differential operator as before. In \S 
\ref{ssec:groth-general}, we have seen that, when the $p$-curvature of $\op L$ 
vanishes and the $b_i(x)$'s have no pole at $0$, a fundamental system of
solutions of $\op L$ is explicitly given by 
\[\sum_{k=0}^{p-1} (-1)^{k} B_k(x) \frac{x^{k}}{k!}\] 
where the $B_k(x)$'s are the matrices defined by the 
recurrence~\eqref{rec for p curv}. In full generality, \emph{i.e.} 
without assuming the vanishing of $B_p$, the idea is to consider the 
formal expansion
\[\sum_{k=0}^{\infty} (-1)^{k} B_k(x) \frac{x^{k}}{k!}.\] 
Of course, this does not make sense in $\Fp(x)$ because of the division 
by $k!$, but we shall see that it does make sense in a suitable ring. A 
natural idea to achieve this goal is to introduce divided powers, 
\emph{i.e.} to consider the 
ring of \emph{Hurwitz series}, denoted by 
$\Fp[[x]]^\DP$, whose elements are formal series of the form 
\[a_0 + a_1 \gamma_1(x) + a_2 \gamma_2(x) + \cdots + a_k \gamma_k(x) + 
\cdots .\]
In the above expression, the $\gamma_k(x)$'s are formal names 
without further additional meaning. Of course, they should be thought of 
as $\frac{x^k}{k!}$ but we cannot write this division because the 
denominator may vanish. The multiplication on $\Fp[[x]]^\DP$ is governed 
by the rule
\[\gamma_m(x) \cdot \gamma_n(x) = \binom{m+n}m \cdot \gamma_{n+m}(x)\]
for any nonnegative integers $m$ and $n$. Besides $\Fp[[x]]^\DP$ is 
equipped with a natural derivation that takes $\sum_k a_k \gamma_k(x)$ 
to $\sum_k a_{k+1} \gamma_k(x)$. We have to be careful however that 
$\Fp[[x]]^\DP$ is not a domain (\emph{e.g.} $\gamma_1(x)^p = 0$) 
and, because of that, 
we cannot consider its ring of fractions. But still, if the matrix $B(x)$ 
has polynomial coefficients, all the $B_k(x)$'s have the same property and 
we can consider their images $B_k^\DP(x)$ is the ring 
$M_n\big(\Fp[[x]]^\DP\big)$. We then can form
\begin{equation}
\label{eq:Sdp}
S^\DP(x) = \sum_{k=0}^\infty (-1)^k B_k^\DP(x) \cdot \gamma_k(x) 
\end{equation} 
obtaining this way a fundamental matrix of solutions of $\op L$ over 
$\Fp[[x]]^\DP$. This construction works actually more 
generally as soon as the entries of $B(x)$ have no pole at zero: in this 
case, we can expand them as series in~$x$ in order to view them in 
$\Fp[[x]]^\DP$. The precise relation between $S^\DP(x)$ and the 
$p$-curvature is given by the next lemma.

\begin{lem}[Bostan--Caruso--Schost~\cite{BoCaSc15}]
\label{lem:dpSDP}
We have the matrix relation:
\[\frac{d^p S^\DP(x)}{dx^p} = - B_p^\DP(x) \cdot S^\DP(x). \]
\end{lem}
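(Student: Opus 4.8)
The plan is to deduce the relation from an operator factorization, $(\partial_x+B)^p = \partial_x^p + B_p$ (with $B_p$ the $p$-curvature matrix of $\op L$), established first over $\Fp(x)$ and then transported to the ring of Hurwitz series. \emph{Step 1: the factorization over $\Fp(x)$.} Inside the noncommutative ring $M_n(\Fp(x))\langle\partial_x\rangle$ of matrix differential operators I would show $(\partial_x+B)^p = \partial_x^p + B_p$, with \emph{no} intermediate terms in $\partial_x^1,\dots,\partial_x^{p-1}$. The point is that $\partial_x^p$ annihilates $\Fp(x)$: this field is spanned over its subfield of constants $\Fp(x^p)$ by $1,x,\dots,x^{p-1}$, each killed by $\partial_x^p$. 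Consequently the abstract element $(\partial_x+B)^p-\partial_x^p$ has $\partial_x$-degree at most $p-1$, and as an operator on $\Fp(x)^n$ it coincides with the $p$-curvature $\Delta^p$ of the connection $\Delta=\partial_x+B$, which — by the foundational fact recalled in \S\ref{sec:ratsolsp} — is $\Fp(x)$-linear, i.e. is multiplication by the matrix $B_p$. Since operators of $\partial_x$-degree $\le p-1$ act faithfully on $\Fp(x)^n$ (if $\sum_{i=0}^{p-1}c_i\partial_x^i$ kills every vector $x^mE_j$ with $0\le m\le p-1$, evaluating successively at $m=0,1,\dots,p-1$ and using that $m!$ is invertible mod $p$ forces all $c_i=0$), we conclude $(\partial_x+B)^p-\partial_x^p=B_p$ in the ring. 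As $B$, and hence every $B_k$ from the recurrence~\eqref{rec for p curv}, has polynomial entries, this is an identity in $M_n(\Fp[x])\langle\partial_x\rangle$.

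\emph{Step 2: transport and conclusion.} The assignment $x^j\mapsto\gamma_1(x)^j=j!\,\gamma_j(x)$ defines a ring homomorphism $\iota\colon\Fp[x]\to\Fp[[x]]^\DP$ that intertwines the two derivations (check $\iota(g')=\tfrac{d}{dx}\iota(g)$ on monomials). Applying $\iota$ entrywise and sending $\partial_x\mapsto\tfrac{d}{dx}$ yields a homomorphism of differential operator rings carrying $B\mapsto B^\DP$ and $B_p\mapsto B_p^\DP$; under it the identity of Step~1 becomes, in the ring of differential operators over $\Fp[[x]]^\DP$,
\[\Bigl(\tfrac{d}{dx}+B^\DP(x)\Bigr)^{p}=\frac{d^{p}}{dx^{p}}+B_p^\DP(x),\]
where now $\tfrac{d^p}{dx^p}$ is not zero but acts as the shift $\gamma_k\mapsto\gamma_{k-p}$. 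Since $S^\DP$ is a solution matrix of $\op L$, namely $\tfrac{d}{dx}S^\DP=-B^\DP S^\DP$ (which is how $S^\DP$ is built, and is re-verified in one line from $B_{k+1}=B_k'+BB_k$), we have $\bigl(\tfrac{d}{dx}+B^\DP\bigr)S^\DP=0$; applying $\bigl(\tfrac{d}{dx}+B^\DP\bigr)^{p-1}$ and then substituting the factorization above gives $\tfrac{d^{p}S^\DP}{dx^{p}}+B_p^\DP S^\DP=0$, which is the assertion.

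\emph{The crux, and an alternative.} The only substantive ingredient is the vanishing of the intermediate-degree terms in Step~1, which is precisely where the $\Fp(x)$-linearity of the $p$-curvature is used; the rest is bookkeeping. A more computational route bypasses the operator identity: since $\binom{p}{j}=0$ for $0<j<p$ and $\tfrac{d^p}{dx^p}$ kills polynomial matrices, term-by-term differentiation gives $\tfrac{d^p}{dx^p}(B_k^\DP\gamma_k)=B_k^\DP\gamma_{k-p}$, hence $\tfrac{d^p}{dx^p}S^\DP=-\sum_{j\ge0}(-1)^jB_{j+p}^\DP\gamma_j$; comparing with $-B_p^\DP S^\DP=-\sum_{j\ge0}(-1)^j(B_pB_j)^\DP\gamma_j$ reduces the lemma to the matrix identity $B_{j+p}=B_pB_j$ for all $j\ge0$, which follows by induction on $j$ from $B_p'=[B_p,B]$ — itself a reformulation of $\Delta\,\Delta^p=\Delta^p\,\Delta$ once $\Delta^p$ is known to be multiplication by $B_p$, so the same input reappears.
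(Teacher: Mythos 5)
Your proof is correct, and its skeleton coincides with the paper's: identify $\Delta^p$ with multiplication by the matrix $B_p(x)$, lift the resulting identity ``$(\text{connection})^p = (d/dx)^p + B_p$'' to the Hurwitz ring, and apply it to $S^\DP(x)$, whose columns are annihilated by the connection. Where you genuinely diverge is in how the lifted identity is obtained. The paper defines $\Delta^\DP = \frac{d}{dx}\otimes\id + 1\otimes\Delta$ on $\Fp[[x]]^\DP\otimes_{\Fp[x]}\Fp[x]^n$, checks the Leibniz rule, and expands $(\Delta^\DP)^p$ so that only the two extreme terms survive thanks to $\binom{p}{i}\equiv 0$; you instead first prove the stronger assertion that $(\partial_x+B)^p=\partial_x^p+B_p$ holds as an identity in the Ore ring $M_n(\Fp[x])\langle\partial_x\rangle$ (via a degree bound plus a faithfulness-of-action argument --- this is the matrix analogue of Remark~\ref{rem:pcurv order 1}, which the paper records only for order~$1$) and then transport it along the differential ring homomorphism $x^j\mapsto j!\,\gamma_j(x)$. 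Your route cleanly separates the algebra (an Ore-ring factorization) from the divided-power bookkeeping (a base change), at the price of the small faithfulness lemma; the paper's route avoids the Ore ring entirely. Your second, purely computational argument --- reducing the lemma to $B_{j+p}=B_pB_j$, itself a consequence of $\Delta\circ\Delta^p=\Delta^p\circ\Delta$ --- is also valid and does not appear in the paper. One bookkeeping caveat: for $\Delta^p$ to be multiplication by $B_p$ (rather than $B_{p-1}$) and for the base case $j=0$ of $B_{j+p}=B_pB_j$ to hold, the recurrence~\eqref{rec for p curv} must be initialized at $B_0=I_n$; your argument implicitly (and correctly) uses this normalization, whereas the text's ``$B_0(x)=B(x)$'' is off by one.
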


\begin{proof}
Set $M = \Fp[x]$ and let
$\Delta : M^n \to M^n, Y \mapsto \frac{dY}{dx} + B(x) Y.$
By definition of the $p$-curvature, $\Delta^p$ is the
multiplication by $B_p(x)$.

Now consider the endomorphism $\Delta^\DP$ of $M^\DP = 
\Fp[[x]]^\DP \otimes_{\Fp[x]} M$ defined by
\[\Delta^\DP = \frac d{dx} \otimes \id_M + 1 \otimes \Delta_M.\]
One checks that it satisfies the Leibniz rule: for $f \in \Fp[[x]]^\DP$
and $m \in M$, we have
\[\Delta^\DP(f \otimes m) = \frac{df}{dx} \otimes m + f \otimes \Delta(m).\]
Hence, raising it to the $p$-th power, we obtain
\[(\Delta^\DP)^p(Y^\DP)
= \frac {d^p Y^\DP}{dx^p} + B_p^\DP(x) \cdot Y^\DP\]
for all vectors $Y^\DP \in M^\DP$.
The equality of the lemma follows given that the columns of $S^\DP(x)$
map to $0$ under $\Delta^\DP$.
\end{proof}

\begin{remark}
Hurwitz series provide a framework in which the Picard–Lindelöf (or, Cauchy-Lipschitz) theorem holds
for linear differential equations in characteristic~$p$. The recent article~\cite{FuHa23} provides
another construction allowing this theorem, which has the advantage of yielding an integral domain but which, in return, does not seem so directly linked to the $p$-curvature.
\end{remark}

\subsection{Application to algorithmics}

Given that the matrix $S^\DP(x)$ is invertible, it follows from
Lemma~\ref{lem:dpSDP} that one can deduce the value of
$B_p^\DP(x)$ from that of $S^\DP(x)$ which, in turn,
can be computed using the techniques of~\cite{BCOSSS07}.
However, at this point, we have not solved entirely the question of
the computation of the $p$-curvature for two reasons. Firstly, the
previous reasoning assumes implicitely that the coefficients $b_i(x)$'s
have no pole at $0$. Secondly, and more importantly, the knowledge of
$B_p^\DP(x)$ is not enough to recover $B_p$. Precisely, letting 
$\Fp(x)_0$ denote the subring of $\Fp(x)$ consisting of functions
with no pole at~$0$, the natural map $\delta_0 : \Fp(x)_0 \to 
\Fp[[x]]^\DP$ is not injective; its kernel is the ideal generated
by $x^p$.

To tackle these issues, the idea is to shift around any other base point
$a \in \Fp$. Doing so, we get a new differential ring homomorphism
$\delta_a : \Fp(x)_a \to \Fp[[x{-}a]]^\DP$ and, reusing the same
techniques, we end up with a fast algorithm that computes the
$p$-curvature $B_p$ modulo $(x{-}a)^p$.
Since we have moreover at our disposal \emph{a priori} bounds on the
size of the $p$-curvature (see Proposition~\ref{prop:sizepcurv}), one
can pick enough elements $a$ in $\Fp$ (or in a finite extension of $\Fp$, if
needed), compute the $p$-curvature modulo
$(x{-}a)^p$ for all those points~$a$ and reconstruct the complete
matrix $B_p(x)$ using the Chinese Remainder Theorem. Implementing this
strategy, we end up with the following theorem.

\begin{theo}[Bostan--Caruso--Schost~\cite{BoCaSc15}]
\label{theo:pcurvalgo}
There exists an algorithm that takes as input a differential
operator
\[\op L =
\partial_x^n + b_{n-1}(x) {\cdot}
\partial_x^{n-1} + \cdots + b_1(x) {\cdot}\partial_x + b_0(x)\]
over $\Fp(x)$ and outputs its $p$-curvature for a cost of
$\softO(dn^\omega p)$ operations in $\Fp$ with
\[d = \max\big(\deg f(x),\, \deg(f(x)b_0(x)),\, \ldots,\, 
  \deg(f(x) b_{n-1}(x))\big)\]
where $f(x)$ is a common denominator of the $b_i(x)$'s.
\end{theo}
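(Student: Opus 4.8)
The plan is to turn the identity of Lemma~\ref{lem:dpSDP} into an effective procedure: compute the fundamental matrix $S^\DP(x)$ fast at several base points, and then reassemble the $p$-curvature by interpolation.

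I would first fix the bookkeeping of sizes. Write $B(x)$ for the companion matrix of $\op L$ and $B_p(x)$ for the matrix of its $p$-curvature defined by the recurrence~\eqref{rec for p curv}. By Proposition~\ref{prop:sizepcurv}.(i) one has $B_p(x) = C_p(x)/f(x)^p$ with the entries of $C_p(x)$ polynomials of degree at most $dp$. Since $f(x)^p$ is known at essentially no cost, it is enough to recover each entry of $C_p(x)$, a polynomial of degree $\le dp$, and for this it suffices to know it modulo $\Theta(d)$ pairwise coprime polynomials of degree $p$ and then apply the Chinese Remainder Theorem. The convenient choice is to use moduli of the form $(x-a)^p$ for pairwise distinct $a \in \Fp$ that are not poles of $f$ (enlarging the base field to a finite extension of degree $O(\log_p d)$ if $\Fp$ is too small, which is absorbed in the $\softO$).

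The core step is then to compute, for a fixed such base point $a$, the reduction $B_p(x) \bmod (x-a)^p$. After translating $x \mapsto x+a$, the coefficients of $\op L$ have no pole at the origin, so they may be expanded as power series (equivalently, viewed in the Hurwitz ring $\Fp[[x]]^\DP$), and formula~\eqref{eq:Sdp} yields the fundamental matrix $S^\DP(x)$. By Lemma~\ref{lem:dpSDP},
\[
B_p^\DP(x) \;=\; -\Big(\frac{d^p}{dx^p}\, S^\DP(x)\Big)\cdot S^\DP(x)^{-1},
\]
and since $\frac{d^p}{dx^p}$ is just a shift of divided-power coefficients, the whole right-hand side is determined by the truncation of $S^\DP(x)$ to order $\Theta(p)$, that is, by the matrices $B_k^\DP(x)$ for $k$ up to $\Theta(p)$. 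Computing this truncation amounts to solving the companion system $Y' = -B(x)Y$ to precision $p$ in the divided-power setting, i.e.\ to running the matrix recurrence~\eqref{rec for p curv} for $\Theta(p)$ steps; the baby-step/giant-step algorithm of~\cite{BCOSSS07} does this in $\softO(n^\omega p)$ operations in the residue field, by grouping the steps into $\Theta(\sqrt p)$ blocks, each realized as the evaluation of a polynomial matrix of bounded degree built from $B(x)$, and chaining the blocks with fast polynomial-matrix multiplication. Inverting $S^\DP(x)$ modulo $(x-a)^p$ and forming the final product cost $\softO(n^\omega p)$ as well, so one base point is processed in $\softO(n^\omega p)$.

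Finally, running this for the $\Theta(d)$ base points and reconstructing $C_p(x)$ by Chinese remaindering — a negligible $\softO(dp)$ per entry — gives $B_p(x) = C_p(x)/f(x)^p$, for a total of $\softO(d)\cdot \softO(n^\omega p) = \softO(d\,n^\omega p)$ operations in $\Fp$. The delicate points, both settled in~\cite{BoCaSc15}, are that the per-base-point cost is governed only by the local precision $p$ rather than by the global degree $dp$ — this is exactly what forces the use of the truncations $(x-a)^p$ together with the divided-power formalism, rather than a single expansion at the origin, whose associated reduction map has kernel the ideal $(x^p)$ — and that the polynomial-coefficient recurrence~\eqref{rec for p curv} can indeed be run within $\softO(n^\omega p)$, including the hidden logarithmic factors and the $n^\omega$ coming from the block matrix products.
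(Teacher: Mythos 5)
Your proposal is correct and follows essentially the same route as the paper: compute $S^\DP$ locally and hence $B_p \bmod (x-a)^p$ via Lemma~\ref{lem:dpSDP} at $\Theta(d)$ base points $a$ (avoiding the non-injectivity of the reduction map at a single point), then reassemble $B_p(x)$ by Chinese remaindering using the degree bounds of Proposition~\ref{prop:sizepcurv}. One small correction: the algorithm of~\cite{BCOSSS07} that computes the truncated fundamental matrix in $\softO(n^\omega p)$ operations is a Newton iteration that doubles the precision at each step, not a baby-step/giant-step scheme --- the latter is the technique behind the $\sqrt p$ complexity of Theorem~\ref{theo:pcurvsimalgo}.
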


Before commenting on the above result, we need to explain some
notation. Firstly, the notation $\softO(-)$ means
that we are hiding logarithmic factors.
Secondly, the exponent $\omega$ refers to what we usually call
a \emph{feasible} exponent for the matrix multiplication.
It simply means that we suppose that we are given an algorithm
that computes the product of two square matrices of size $n$
using at most $O(n^\omega)$ operations in the base field.
The naive method for multiplying matrices (the one we have all
learnt in our first course of linear algebra) indicates that 
we can take $\omega = 3$. However, it turns out that better
algorithms exist.
For example, Strassen's algorithm~\cite{Strassen69} results in
$\omega = \log_2 7 \approx 2.8$. Nowadays, the best known value
for $\omega$ is about $2.37188$ and the corresponding algorithm is
due to Duan, Wu and Zhou~\cite{DuWuZh23}.
It is a widely open conjecture if one can take $\omega = 2 +
\varepsilon$ for all $\varepsilon > 0$.

The announced complexity $\softO(d n^\omega p)$ should be compared
to the size of the output, \emph{i.e.} the number of scalars in~$\F_p$
needed to write down completely the $p$-curvature.
By Proposition~\ref{prop:sizepcurv}, $B_p$ if an $n \times n$ matrix
whose entries are rational functions with numerators and denominators
of degree at most $dp$; in practice, this bound is in general sharp.
Therefore, the size of the output is about $d n^2 p$.
As a consequence, the algorithm behind Theorem~\ref{theo:pcurvalgo}
would be quasi-optimal (\emph{i.e.} optimal up to constant and
logarithmic factors) if $\omega$ were equal to $2$. Even if this
limit cannot be attained, this comparison underlines the good
performances of the algorithm. In practice, using it makes it possible 
to compute $p$-curvatures of operators of order and degree $20$ 
in a few seconds when $p < 100$ and in about half an hour when $p = 12007$.

\subsection{Similarity class and characteristic polynomial}

We have seen previously (after Proposition~\ref{prop:sizepcurv})
that, although the size of the $p$-curvature grows linearly with
respect to~$p$, its characteristic polynomial has roughly the 
same size as the input operator $\op L$ even when $p$
gets large.
For this reason, one might hope to be able to compute the
characteristic polynomial faster than the $p$-curvature itself.

The main observation for achieving this is a refinement of
Lemma~\ref{lem:dpSDP} which asserts that $B_p^\DP(x) = B_p(x) 
\text{ mod } x^p$ is not only equal to
\[-(S^\DP(x))^{-1} \cdot \frac{d^p S^\DP(x)}{dx^p}\]
but it is further \emph{similar} to the value at $x = 0$ (\emph{i.e.}
the reduction modulo $x$) of the latter product.
On the other hand, evaluating this reduction can be done with
standard algorithmic techniques (based on a 
``baby step / giant step'' approach) in time proportional to
$\sqrt p$. Based on this, we obtain the next theorem.

\begin{theo}[Bostan--Caruso--Schost~\cite{BoCaSc16}]
\label{theo:pcurvsimalgo}
There exists an algorithm that takes as input a 
differential operator
\[\op L =
\partial_x^n + b_{n-1}(x) {\cdot}
\partial_x^{n-1} + \cdots + b_1(x) {\cdot}\partial_x + b_0(x)\]
over $\Fp(x)$
and outputs the invariant factors of its $p$-curvature for 
a cost of 
\[\softO\big(d^{\omega+\frac 3 2} n^{\omega+1} \sqrt p\big)\]
operations in $\Fp$ where $d$ is defined as in Theorem~\ref{theo:pcurvalgo}.
\end{theo}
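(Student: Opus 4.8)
The plan is to combine the Hurwitz-series description of the solutions of $\op L$ from \S\ref{ssec:pcurvandsol} with a baby-step/giant-step acceleration and a point-wise reconstruction of the invariant factors, following the strategy announced just before the statement. First I would reduce, for a suitable base point, the problem to a single \emph{constant} matrix. Pick $a \in \Fp$ --- or in a small extension, see below --- that is neither a pole of any $b_i(x)$ nor a zero of a common denominator $f(x)$; as there are only $O(dn)$ such points, such an $a$ is easy to find. Working around $a$, consider the invertible fundamental matrix $S^\DP(x) \in \mathrm{GL}_n\big(\Fp[[x{-}a]]^\DP\big)$ of Equation~\eqref{eq:Sdp}; by Lemma~\ref{lem:dpSDP} together with the refinement recalled before the statement, $B_p(x) \bmod (x{-}a)^p$ is conjugate, over $\Fp[[x{-}a]]$, to the constant matrix $\Theta_a = - \big(S^\DP(a)\big)^{-1} \cdot \big(\partial_x^p S^\DP\big)(a) \in M_n(\Fp)$. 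Reducing this conjugation modulo $(x{-}a)$ shows that $B_p(a)$ is similar to $\Theta_a$ over $\Fp$; in particular $B_p(a)$ and $\Theta_a$ have the same invariant factors.

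The second step is to pass from these ``fibres'' back to the $p$-curvature. By Proposition~\ref{prop:sizepcurv}(ii), $B_p(x)$ is conjugate over $\Fp(x)$ to a matrix $N(x^p)$ with entries in $\Fp(x^p) = \Fp(u)$; since the invariant factors of a matrix are unchanged under the (purely inseparable) field extension $\Fp(u) \hookrightarrow \Fp(x)$, the invariant factors of $B_p(x)$ are precisely the polynomials $P_i(x^p, T)$, where $P_1 \mid \cdots \mid P_n$ in $\Fp(u)[T]$ are the invariant factors of $N(u)$; moreover Proposition~\ref{prop:sizepcurv}(i) bounds the degree in $u$ of their coefficients by $O(dn)$. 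For all but finitely many $a \in \Fp$ the invariant factors of $N(a)$ are the specializations $P_i(a, T)$ (note that $a^p = a$), and $N(a) \sim B_p(a) \sim \Theta_a$ by the first step. So the algorithm is: compute the rational canonical form of the constant matrix $\Theta_a$ for $O(dn)$ good base points $a$, read off the invariant factors of each by standard linear algebra over a field, and reconstruct the $P_i(u, T)$ by interpolation using the degree bound. When $p$ is too small to provide enough base points, everything is carried out over an extension $\F_{p^e}$ with $p^e = \softO(dn)$, whose arithmetic only costs a logarithmic overhead.

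The third step is to compute each $\Theta_a$ fast, and this is the only place where $p$ enters. Write $\op L$ with polynomial coefficients of degree $\le d$; then the coordinates $(u_k)_k$ of a divided-power solution of $\op L$ in $\Fp[[x{-}a]]^\DP$ satisfy a linear recurrence of order $n+d$ whose coefficients are polynomials in $k$ of degree $\le d$. Running this recurrence for a full basis of divided-power solutions at once produces the low-order part of $S^\DP(x)$, and extracting its $\gamma_p$-coordinate --- that is, $\big(\partial_x^p S^\DP\big)(a)$ --- amounts to computing the $p$-th term of a matrix linear recurrence with polynomial coefficients. I would do this with the Chudnovsky--Chudnovsky / Bostan--Gaudry--Schost baby-step/giant-step, splitting $p \approx \lceil\sqrt p\,\rceil \cdot \lceil\sqrt p\,\rceil$; it costs $\softO(\sqrt p\,)$ operations up to factors polynomial in $n$ and $d$, after which one $n \times n$ inversion and product give $\Theta_a$. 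Multiplying the per-point cost by the $O(dn)$ base points and tracking the polynomial factors yields the announced bound $\softO\big(d^{\omega+3/2}\,n^{\omega+1}\,\sqrt p\,\big)$.

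I expect the main obstacle to be the first step, specifically the justification of the refinement of Lemma~\ref{lem:dpSDP}: the fact that $B_p(x) \bmod (x{-}a)^p$ is similar to the \emph{constant} matrix $\Theta_a$ is exactly what turns a similarity invariant of a matrix over $\Fp(x)$ into finitely many linear-algebra problems over $\Fp$, and it rests on the Frobenius-descent structure underlying Proposition~\ref{prop:sizepcurv}(ii). The remaining difficulties are technical: the precise complexity analysis of the baby-step/giant-step, and the use of Proposition~\ref{prop:sizepcurv} to control exactly how many base points, and how much $x$-adic precision, the interpolation of the third step requires.
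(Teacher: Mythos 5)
Your proposal follows essentially the same route that the paper sketches (and defers to \cite{BoCaSc16} for in detail): the refinement of Lemma~\ref{lem:dpSDP} making $B_p(x) \bmod (x-a)^p$ similar to a constant matrix, a baby-step/giant-step computation of the $\gamma_p$-coefficient of $S^\DP$ in $\softO(\sqrt p)$ operations per base point, and evaluation--interpolation of the invariant factors using the sparsity and degree bounds of Proposition~\ref{prop:sizepcurv}. The one step to tighten is the specialization of invariant factors, which does not commute with evaluation at every point $a$: a complete argument must effectively bound or detect the bad points rather than appeal to ``all but finitely many''.
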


We notice that the knowledge of the invariant factors is
finer than that of the characteristic polynomial since the
latter is the product of the formers. Furthermore, knowing
the invariant factors, one can decide whether the $p$-curvature
vanishes or not, whereas the characteristic polynomial only
gives information about its nilpotency.

On the complexity side, we notice that the cost of the
algorithm of Theorem~\ref{theo:pcurvsimalgo} is worse with
respect to the parameters~$d$ and $n$ but better with respect
to~$p$. It is then interesting for small operators but large
characteristic.

Finally, we mention that, if we are only interested by the 
characteristic polynomial of the $p$-curvature, faster algorithms 
(based on different techniques) exist.

\begin{theo}[Bostan--Caruso--Schost~\cite{BoCaSc14}]
There exists an algorithm that takes as input a 
differential operator
\[\op L =
\partial_x^n + b_{n-1}(x) {\cdot}
\partial_x^{n-1} + \cdots + b_1(x) {\cdot}\partial_x + b_0(x)\]
over $\Fp(x)$
and outputs the characteristic polynomial of its $p$-curvature 
for a cost of 
\[\softO\big((d{+}n)^{\omega} \min(d,n) \sqrt p + (d{+}n)^{\omega+1} \min(d,n)\big)\]
operations in $\Fp$ where $d$ is defined as in Theorem~\ref{theo:pcurvalgo}.
\end{theo}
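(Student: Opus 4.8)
The plan is to compute the characteristic polynomial $\chi_{\op L}(x,T) := \det\!\big(T\,\mathrm{Id}_n - B_p(x)\big) \in \Fp(x)[T]$ of the $p$-curvature without ever forming the full $p$-curvature matrix $B_p(x)$ (which has size $\Theta(dn^2p)$), by reconstructing it from a few specializations $x=a$, each of which turns out to be the characteristic polynomial of an honest constant matrix over a small extension of $\Fp$ that can be read off local power-series solutions. The two ingredients I would lean on are the size/structure estimates of Proposition~\ref{prop:sizepcurv} and the divided-power solution matrix of Lemma~\ref{lem:dpSDP}.

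\textbf{Step 1 (structure).} By Proposition~\ref{prop:sizepcurv}(ii) the coefficients of $\chi_{\op L}(x,T)$ lie in $\Fp(x^p)$, and by part~(i), writing $B_p(x)=f(x)^{-p}C_p(x)$ with $\deg C_p\le dp$ and using the Frobenius identity $f(x)^p=f(x^p)$, one checks that the coefficient of $T^{n-i}$ has the form $N_i(x^p)/f(x^p)^{i}$ with $\deg N_i\le id$. Hence $\chi_{\op L}(x,T)$ is completely determined by its values $\chi_{\op L}(a,T)$ at $O(nd)$ points $a$ chosen outside the at most $d$ poles of the $b_i(x)$; tracking the $T$-grading, the number of scalars to recover is in fact $O\big((d{+}n)\min(d,n)\big)$. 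If $\Fp$ is too small one takes the $a$'s in an extension $\F_q$ with $[\F_q:\Fp]=O(\log(dn))$, which is harmless up to logarithmic factors, and fast multipoint evaluation/interpolation in $x^p$ reduces the whole task to computing the polynomials $\chi_{\op L}(a,T)\in\F_q[T]$.

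\textbf{Step 2 (one specialization via solutions).} Fix a non-singular $a$. Since $x^p-a^p=(x-a)^p$ in characteristic $p$ and the coefficients of $\chi_{\op L}$ are functions of $x^p$, one gets $\chi_{\op L}(x,T)\equiv\chi_{\op L}(a,T)\pmod{(x-a)^p}$, so $\chi_{\op L}(a,T)$ can be extracted from $B_p(x)\bmod (x-a)^p$. Better: by the refinement of Lemma~\ref{lem:dpSDP} discussed before Theorem~\ref{theo:pcurvsimalgo}, $B_p(x)\bmod(x-a)^p$ is \emph{similar} to the constant matrix $M_a := -\big((S^\DP_a(x))^{-1}\cdot \tfrac{d^p}{dx^p}S^\DP_a(x)\big)\big|_{x=a}\in M_n(\F_q)$, where $S^\DP_a$ is the fundamental solution matrix of $\op L$ over the divided-power ring $\F_q[[x-a]]^\DP$ obtained by expanding the companion matrix around $a$. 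Since the characteristic polynomial is a conjugation invariant, $\chi_{\op L}(a,T)=\det(T\,\mathrm{Id}_n-M_a)$, a plain $n\times n$ constant-matrix characteristic-polynomial computation once $M_a$ is known.

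\textbf{Step 3 (the $\sqrt p$, and the bottleneck).} It remains to produce $M_a$ fast. Computing $S^\DP_a\bmod(x-a)^{p+1}$ by the recurrence~\eqref{rec for p curv} costs $\softO(n^\omega p)$ per point, which only reproduces Theorem~\ref{theo:pcurvalgo}. The gain comes from observing that the datum actually needed — the transition from the $\gamma_0$-part to the $\gamma_p$-part of $S^\DP_a$, where, using $\binom pk\equiv0\pmod p$ for $0<k<p$, only the two ends contribute — is the $p$-th iterate of a shift-type operator with polynomial coefficients, and can therefore be assembled by a baby-step/giant-step scheme in the style of Theorem~\ref{theo:pcurvsimalgo}: precompute, as a polynomial matrix of controlled degree, the transition over one block of $\lceil\sqrt p\rceil$ divided-power degrees, then compose $O(\sqrt p)$ such blocks. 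Running this over the $O(d{+}n)$ base points and recombining via Step~1 yields the announced bound $\softO\big((d{+}n)^\omega\min(d,n)\sqrt p + (d{+}n)^{\omega+1}\min(d,n)\big)$, the two terms corresponding to the giant-step matrix arithmetic and to the final interpolation and characteristic-polynomial computations. The hard part is exactly Step~3: making the baby-step/giant-step composition both valid and efficient inside the non-domain ring $\F_q[[x-a]]^\DP$ while keeping the polynomial degrees and matrix sizes simultaneously under the $(d{+}n)$ and $\min(d,n)$ thresholds that the final complexity demands; the remaining ingredients — the degree estimates of Step~1, conjugation invariance, the choice of good evaluation points, and fast evaluation/interpolation — are comparatively routine bookkeeping.
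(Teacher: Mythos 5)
There is a genuine gap here, and it is worth situating it: the survey does not prove this theorem at all --- it quotes it from~\cite{BoCaSc14} and explicitly warns that the algorithm is ``based on different techniques'' from the divided-power/similarity machinery of Lemma~\ref{lem:dpSDP} and Theorem~\ref{theo:pcurvsimalgo}. Your proposal is exactly an attempt to extract the characteristic polynomial from that machinery (specialize at points $a$, replace $B_p(x) \bmod (x-a)^p$ by a similar constant matrix read off $S^\DP_a$, and run a baby-step/giant-step of length $p$ at each point). Steps 1 and 2 are fine as far as they go, but Step 3 --- which you yourself identify as the entire difficulty --- is not carried out, and the complexity you announce at the end is asserted rather than derived. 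The careful implementation of precisely this strategy is Theorem~\ref{theo:pcurvsimalgo}, whose cost $\softO\big(d^{\omega+\frac32} n^{\omega+1}\sqrt p\big)$ is what the present theorem is meant to improve upon; your sketch contains no new ingredient that would lower those exponents.

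One can in fact see that the point-by-point route cannot reach the stated bound. The coefficient of $T^{0}$ in $\det(T\,\mathrm{Id}_n - B_p(x))$ is $\det(-B_p(x))$, whose numerator has degree up to $nd$ in $x^p$ by your own Step~1, so the interpolation requires $\Theta(nd)$ base points, not $O(d+n)$. Already for $d=n$ this is $\Theta(n^2)$ points against a total budget of $\softO(n^{\omega+1}\sqrt p)$, i.e.\ a budget of $\softO(n^{\omega-1}\sqrt p)$ per point --- less than the $\Omega(n^2\sqrt p)$ needed merely to store the giant steps of an independent length-$p$ matrix-factorial at that point. So the work must be shared across all specializations at once, which is a different algorithm, not a refinement of this one. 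A second, related gap: your claim that the output has only $O\big((d{+}n)\min(d,n)\big)$ coefficients does not follow from Proposition~\ref{prop:sizepcurv}; the minor expansion only gives $\deg N_i \le id$, hence $\Theta(n^2 d)$ coefficients in total. The sharper size bound is itself one of the nontrivial results of~\cite{BoCaSc14}, obtained from a different description of the characteristic polynomial via the structure of $\Fp(x)\langle\partial_x\rangle$ as an algebra of rank $p^2$ over its center $\Fp(x^p)[\partial_x^p]$, and that description --- not Lemma~\ref{lem:dpSDP} --- is also where the $\sqrt p$ in the cited algorithm comes from.
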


In practice, this algorithm performs quite well and allows for
computing the characteristic polynomial in less than one hour for
the parameters $d = n = 20$ and $p = 120\:011$.

Recently, Pagès proved an ``average version'' of this theorem which,
roughly speaking, states that, starting with a differential operator
over $\Q(x)$, one can compute all its $p$-curvatures (up to some
given bound) in average time proportional to $\log p$.

\begin{theo}[Pagès~\cite{Pages21}]
There exists an algorithm that takes as input a 
differential operator
\[\op L =
b_n(x) \cdot \partial_x^n + b_{n-1}(x) {\cdot}
\partial_x^{n-1} + \cdots + b_1(x) {\cdot}\partial_x + b_0(x)\]
with $b_i(x) \in \Z[x]$
and outputs the characteristic polynomial of all the $p$-curvatures
of $\op L \text{ mod } p$ for $p \leq N$ for a cost of 
\[\softO\big( \big( (d{+}n)^{\omega} (d{+}m) + (d{+}n)^3\big) \cdot N d \big)\]
operations on bits, where $d$ is the maximal degree of the $b_i(x)$'s and
$m$ is the maximal bitsize of an integer appearing as the coefficient of
one of the $b_i(x)$'s.
\end{theo}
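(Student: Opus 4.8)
The plan is to graft the \emph{accumulating remainder tree} technique onto the single-prime algorithm of~\cite{BoCaSc14}. First recall \emph{how} that algorithm spends its $\softO(\sqrt p)$-per-prime budget: for each of $O(d)$ auxiliary base points $a$, it evaluates a matrix product
\[ P_a(p) \;=\; \prod_{0 \leq k < p} M_k^{(a)} \bmod p, \]
where the $M_k^{(a)}$ are square matrices of size $O(d+n)$ whose entries are the values at $k$ of fixed polynomials in $\Z[k]$ of degree $O(d)$ with integer coefficients of bitsize $O(m)$; one then reads off from the $P_a(p)$ — by a few linear-algebra operations, a characteristic-polynomial computation, and a final interpolation in $x^p$ — the characteristic polynomial of the $p$-curvature of $\op L \bmod p$. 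A single such product, being a $\Theta(p)$-fold product of matrices with polynomial-in-$k$ entries, is what forces the $\sqrt p$: it is evaluated by baby-step/giant-step. (One concrete source of such a product is the linear recurrence satisfied by the Hurwitz-series coefficients of a fundamental system of solutions of $\op L \bmod p$ expanded around $x=a$, cf.\ \S\ref{sec:ratsolsp} and Lemma~\ref{lem:dpSDP}; the non-integral normalizations, coming from the constant term of $b_n$ at $a$, are stripped off and restored at the end via Fermat's little theorem.)

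Now replace the baby-step/giant-step evaluation of each $P_a(p)$ by its batched counterpart. The family $\big(P_a(p)\big)_{p \leq N}$ is exactly of the shape handled by the accumulating remainder tree of Costa, Gerbicz and Harvey — the same machinery that underlies recent average-polynomial-time algorithms (for Hasse--Witt matrices, zeta functions, and the like), designed precisely for sequences defined by a linear recurrence with polynomial coefficients. Build, over $\Z$ and with fast integer multiplication, the subproduct tree of $\prod_{0 \leq k < N} M_k^{(a)}$; then descend it, reducing each node modulo the primes it is responsible for and cutting each product at $k=p$, so that the leaves return all the $P_a(p) \bmod p$. A level-by-level accounting — $O(\log N)$ levels, on each level integer operands of total bitsize $O\big(N(d+m)\big)$, each matrix product costing $(d+n)^\omega$ scalar multiplications — bounds the cost of one base point by $\softO\big((d+n)^\omega\cdot N(d+m)\big)$ bit operations.

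Carrying out this construction for all $O(d)$ base points, and then applying at each prime the cheap local steps of~\cite{BoCaSc14} (at cost $\softO\big((d+n)^3\big)$ per prime), one obtains the characteristic polynomials of all the $p$-curvatures for $p \leq N$ within $\softO\big(\big((d+n)^\omega(d+m)+(d+n)^3\big)\cdot Nd\big)$ bit operations. The finitely many excluded primes — those dividing one of the $O(d)$ fixed integers $b_n(a)$, or too small to keep the $a$'s distinct — are handled one by one by~\cite{BoCaSc14}, with negligible total cost.

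The crux is the interaction between the \emph{global} tree, built once over $[0,N)$, and the \emph{per-prime} truncation at $k=p$: the factors $M_k^{(a)}$ with $k \geq p$ are legitimate integer matrices but carry no meaning modulo $p$ (they would demand a division by $k!$, which vanishes), so each leaf must receive precisely $\prod_{k<p} M_k^{(a)}$ while the sharing that makes the tree quasi-linear is preserved. This is what the ``remainder forest'' refinement accomplishes, by partitioning the primes into dyadic blocks and performing a bounded number of truncated descents; quantifying its cost, together with verifying that the underlying recurrence genuinely has bounded, $p$- and $a$-independent \emph{integer} data — so that the subproduct tree really is built over $\Z$, which is the whole reason the normalization by (the constant term of) $b_n$ is factored out and reinstated only at the end — is where the technical work lies.
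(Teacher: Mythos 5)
The survey states this theorem without proof, simply citing Pagès~\cite{Pages21}, so there is no in-paper argument to compare against; your sketch, however, correctly reconstructs the strategy of the cited work: reduce the single-prime computation to truncated matrix factorials $\prod_{k<p}M_k^{(a)}\bmod p$ with polynomially-varying integer entries (as in~\cite{BoCaSc14,BoCaSc16}), then batch all primes $p\leq N$ via the Costa--Gerbicz--Harvey accumulating remainder tree/forest, with the cost accounting ($O(d)$ base points, $O(\log N)$ levels of total bitsize $\softO(N(d+m))$, plus $\softO((d+n)^3)$ of local linear algebra per prime) matching the announced bound. This is essentially the same approach as the reference, with the remaining work being the technical verification you yourself flag.
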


\section{Algebraicity and integrality}
\label{ssec:integrality}

The theoretical developments we carried out in \S\ref{ssec:pcurvandsol} 
have also interesting consequences in characteristic~$0$, as they allow
to relate algebraicity of solutions with the growth of denominators of
the coefficients in their Taylor expansions.
The aim of this section is to appetize the reader with some charming 
results and perspectives in this direction.

Throughout this section, we let
\[\op L =
\partial_x^n + b_{n-1}(x) {\cdot}
\partial_x^{n-1} + \cdots + b_1(x) {\cdot}\partial_x + b_0(x)\]
be a differential operator over $\Q(x)$ and assume that $b_i(x)$ has
no pole at $0$ for all $i$. We set:
\begin{equation}
\label{eq:fundamentalsol2}
S(x) = \sum_{k=0}^\infty (-1)^k B_k(x) \cdot \frac{x^k}{k!}
\end{equation}
where the $B_k(x)$'s are defined by
equation~\eqref{rec for p curv}. The matrix $S(x)$ has entries in
$\Q[[x]]$ and we write $S(x) = \sum_{i=0}^\infty S_i x^i$ where the
$S_i$ are matrices over $\Q$. We emphasize that $S_i$ is \emph{not}
equal to $\frac{(-1)^i}{i!} B_i(x)$ because the latter has in general
coefficients in $\Q(x)$.

\subsection{Growth of denominators and $p$-curvatures}

We would like to compare $S(x)$ with the matrix $S^\DP(x)$ introduced
in equation~\eqref{eq:Sdp} and, for this, to reduce everything
($\op L$, $S(x)$, \emph{etc.}) modulo a prime number $p$.
However, this operation requires some care
because the $B_k(x)$'s may exhibit denominators.
In order to do it properly, we introduce new rings. For any subring
$R \subset \Q$, we set:
\begin{align*}
R(x) & =
\left\{\, \frac{P}{Q}
\quad \text{with} \quad P, Q \in R[x]
\text{ and } Q \text{ monic}
\,\right\}, \\
R(x)_0 & =
\left\{\, \frac{P}{Q}
\quad \text{with} \quad P, Q \in R[x],\,
Q \text{ monic} \text{ and } Q(0) \neq 0
\,\right\}, \\
R[[x]]^\DP & =
\left\{\, \sum_{i=0}^\infty a_i\:\frac{x^i}{i!}
\quad \text{with} \quad a_i \in R \text{ for all } i
\,\right\} .
\end{align*}
One has the following chain of inclusions
$R[x] \subset R(x)_0 \subset R(x)$ together with an injective
morphism of rings $R(x)_0 \hookrightarrow R[[x]]$. Besides,
all these maps commute with the derivation $\frac d{dx}$ and, if
$p$ is a prime number which is noninvertible in $R$, they are
compatible with the reduction modulo $p$;
in particular, we have the following commutative diagram:
\[\xymatrix @C=4em {
R(x)_0 \ar[r] \ar[d]^-{\mod p} &
R[[x]] \ar[r] \ar[d]^-{\mod p} &
R[[x]]^\DP \ar[d]^-{\mod p} \\
\Fp(x)_0 \ar[r] & \F_p[[x]] \ar[r] & \F_p[[x]]^\DP
}\]
where all the arrows are homomorphisms of rings and commute 
with the derivation.
We now assume that the entries of $B(x)$ have no pole at~$0$ and
choose $R$ in such a way that they all belong to $R(x)_0$
(one can always take $R = \Z[\frac 1 D]$ for $D$ large enough). All
the $B_m(x)$'s then take coefficients in $R(x)_0$ as well
and the matrix $S(x)$ is defined over $R[[x]]^\DP$. Besides, the
image of $S(x)$ modulo $p$ is the matrix $S^\DP(x)$ modulo $p$
associated to the differential system $Y' + (B(x) \mod p) \cdot
Y = 0$ in characteristic~$p$.
Lemma~\ref{lem:dpSDP} then leaves us with the congruence:
\begin{equation}
\label{eq:SpAp}
S_p \equiv \frac{B_p(x)}{p!} \pmod x,
\quad \text{i.e.} \quad
S_p = \frac{B_p(0)}{p!}.
\end{equation}
Hence
the $p$-curvatures (which, we recall, are the matrices $B_p(x) 
\mod p$) are directly related to the coefficients appearing in a
fundamental system of solutions.
In particular, the vanishing of $B_p(0)$ modulo~$p$ is equivalent to
the fact that the denominator of $S_p$ is coprime with~$p$. Many
variations on this theme are possible; a beautiful example is given by
the following theorem.

\begin{theo}[see Proposition 5.3.3 in \cite{Andre04}]
\label{theo:Andre}
Let
\[\op L =
\partial_x^n + b_{n-1}(x) {\cdot}
\partial_x^{n-1} + \cdots + b_1(x) {\cdot}\partial_x + b_0(x)\]
be a differential operator over $\Q(x)$ and $D$ be a positive integer.
We assume that $\op L$ admits $n$ solutions $Y_1, \ldots, Y_n$
which have coordinates in $\Z[\frac 1 D][[x]]$ and are linearly
independent over $\Q$.
Then almost all the $p$-curvatures of $\op L$ vanish.
\end{theo}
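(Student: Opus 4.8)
The plan is to deduce the statement from Cartier's lemma (\cref{prop:cartier lemma}), in the form recalled in the remark following it: for a prime $p$ at which $\op L$ reduces to a well-defined operator $\op L_p \in \F_p(x)\langle\partial_x\rangle$, the $p$-curvature of $\op L_p$ vanishes if and only if $\op L_p$ admits $n$ power series solutions in $\F_p[[x]]$ that are linearly independent over the field of constants $\F_p((x^p))$. I would then produce such a basis, for all but finitely many $p$, simply by reducing the given solutions modulo $p$; the only real work is to check that this reduction preserves linear independence over $\F_p((x^p))$, and that is exactly where the integrality hypothesis enters.

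Write each $Y_i$ as the column solution of the companion system attached to a scalar power series solution $y_i$ of $\op L$, so that the hypothesis reads: $y_1, \dots, y_n \in \Z[\tfrac1D][[x]]$ are linearly independent over $\Q$. Since $\op L$ has order $n$, the solution space of $\op L$ inside $\Q[[x]]$ has dimension at most $n$ over $\Q$ (this is the Wronskian lemma over the differential field $\Q((x))$, whose field of constants is $\Q$); hence $y_1, \dots, y_n$ form a basis of it and, again by the Wronskian lemma, their Wronskian $W \coloneqq \operatorname{Wr}(y_1, \dots, y_n)$ is a nonzero element of $\Q[[x]]$. Crucially, $W$ is a polynomial expression with integer coefficients in the $y_i$ and their successive derivatives, all of which lie in $\Z[\tfrac1D][[x]]$; therefore $W \in \Z[\tfrac1D][[x]]$. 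Let $w_{k_0} \in \Z[\tfrac1D]\setminus\{0\}$ be its first nonzero Taylor coefficient.

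Let $\Sigma$ be the finite set of primes $p$ such that $p \mid D$, or $p$ divides the numerator of $w_{k_0}$, or $\op L$ does not reduce to a well-defined operator modulo $p$ (this last condition excludes the finitely many $p$ dividing a common denominator of the coefficients $b_i(x)$ and of their Laurent expansions at the origin). Fix $p \notin \Sigma$. Reduction modulo $p$ is then a homomorphism of differential rings on all the rings involved, so the reductions $\bar y_1, \dots, \bar y_n \in \F_p[[x]]$ are solutions of $\op L_p$, and $\operatorname{Wr}(\bar y_1, \dots, \bar y_n) = \bar W$, whose coefficient of $x^{k_0}$ is $\overline{w_{k_0}} \neq 0$; in particular $\bar W \neq 0$. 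Applying the Wronskian lemma this time over $\F_p((x))$, whose field of constants is $\F_p((x^p))$, the nonvanishing of $\bar W$ forces $\bar y_1, \dots, \bar y_n$ to be linearly independent over $\F_p((x^p))$. Hence $\op L_p$ has a full system of $n$ power series solutions in $\F_p[[x]]$ that are linearly independent over $\F_p((x^p))$, and by the form of Cartier's lemma recalled above the $p$-curvature of $\op L_p$ vanishes. Since $\Sigma$ is finite, almost all $p$-curvatures of $\op L$ vanish.

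I expect the only genuine obstacle to be the one already isolated: a priori a nonzero power series in $\Q[[x]]$ could reduce to $0$ modulo infinitely many primes, and one must rule this out for $W$. This is precisely what the uniform denominator bound $W \in \Z[\tfrac1D][[x]]$ buys, since then the first nonzero coefficient $w_{k_0}$ is a fixed nonzero rational number, divisible by only finitely many primes; everything else is routine bookkeeping. One could alternatively run the mechanism through Equation~\eqref{eq:SpAp} — a fundamental matrix assembled from the $Y_i$ has entries in $\Z[\tfrac1{D'}][[x]]$ for a suitable $D'$, which keeps $S_p$ $p$-integral and hence makes $B_p(0) = p!\,S_p$ vanish modulo $p$ — but converting this into the vanishing of the \emph{full} $p$-curvature $B_p(x)$ requires a further argument involving auxiliary base points, whereas the Wronskian route above needs no regularity of $\op L$ at the origin and is preferable.
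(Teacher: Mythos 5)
Your proof is correct, and it supplies an argument where the paper gives none: Theorem~\ref{theo:Andre} is stated with a citation to Andr\'e, and the only mechanism the surrounding text sketches is the one you mention at the end, namely Eq.~\eqref{eq:SpAp}, which by itself yields only $B_p(0) \equiv 0 \pmod p$ and would indeed require moving the base point to obtain the identical vanishing of $B_p(x)$. Your Wronskian route is the standard one (essentially Honda/Andr\'e) and isolates correctly the single place where the integrality hypothesis is used: the first nonzero coefficient $w_{k_0}$ of $W \in \Z[\tfrac 1 D][[x]]$ is a fixed nonzero rational number and hence survives reduction modulo all but finitely many $p$. One small bookkeeping point: the implication you invoke from the remark following Theorem~\ref{prop:cartier lemma} --- that condition (4) there forces the vanishing of the $p$-curvature --- is asserted in that remark but not justified in the text; it is, however, immediate from your own computation, since $\bar W \neq 0$ means the fundamental matrix $(\bar Y_1 \mid \cdots \mid \bar Y_n)$ is invertible over $\F_p((x))$, so its columns span $\F_p((x))^n$, and $\Delta^p$, being $\F_p((x))$-linear (after extending scalars from $\F_p(x)$ to $\F_p((x))$) and annihilating them, must be the zero map, i.e.\ $B_p(x)=0$. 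With that observation spelled out, your argument is self-contained and does not even rely on the unproved equivalences of the remark.
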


\begin{remark}
Under Grothendieck's conjecture, Theorem~\ref{theo:Andre} can be elegantly
rephrased as follows: if a differential system admits a basis
of solutions in $\Z[\frac 1 D][[x]]$ for some positive integer $D$
(\emph{i.e.}, a basis of \emph{globally bounded} solutions),
then it also admits a basis of algebraic solutions.
This is known as \emph{Bézivin's conjecture}; it was formulated by Bézivin
in~\cite[p.~299]{Bezivin91} and proved by him for $q$-differential
equations~\cite[Thm.~7-1]{Bezivin91}; see also Conjecture 6.3 in \cite{Christol83}. 
It is widely open whether
Bézivin's conjecture is more difficult than Grothendieck's conjecture; at any
rate, it appears that for the time being the only cases for which Bézivin's
conjecture is proven are those for which Grothendieck's conjecture is known to be true.
\end{remark}

\begin{ex}
\label{ex exp-arctan3}
We illustrate Theorem~\ref{theo:Andre} with the differential equation
\[y' = \frac 1 {x^2 + 1} y\]
already considered in Example~\ref{ex exp-arctan}.
Over the rationals, the solutions are all proportional to the 
fundamental solution
\[y_0(x) = \exp(\arctan(x)) = \sum_{n=0}^\infty c_n x^n\]
where the $c_n$'s are rational numbers. We would like to find 
bounds on denominators of the $c_n$'s.
More precisely, we fix a prime number $p \neq 2$ (for simplicity)
and ask whether the denominators of the $c_n$'s are all coprime 
with~$p$. 
For this, we use the following important result due to Dwork.  
\begin{theo}[{Dwork's criterion, \cite[p.~409]{Robert00}}]
Given a prime $p$ and $f(x) \in x\Q[[x]]$, we have $\exp(f(x)) \in 1+x\Z_{(p)}[[x]]$ if and only if $f(x^p)-pf(x) \in px\Z_{(p)}[[x]]$, where $\Z_{(p)}$ is the subring of $\Q$ consisting of fractions
$\frac a b$ with $b$ coprime with $p$. 
\end{theo}
According to Dwork's criterion, the denominators of the $c_n$'s are all coprime 
with~$p$ if and only if
\[\arctan(x^p) - p \cdot \arctan(x) \in p \Z_{(p)}[[x]].\]
We have:
\begin{equation}
\label{eq arctan}
\arctan(x^p) - p \cdot \arctan(x) =
\sum_{n=0}^\infty \frac{(-1)^n}{2n+1} x^{(2n+1)p} - 
p \cdot \sum_{n=0}^\infty \frac{(-1)^n}{2n+1} x^{(2n+1)}.
\end{equation}
Clearly, when $2n+1$ is coprime with $p$, the coefficient
$p \cdot \frac{(-1)^n}{2n+1}$ is divisible by $p$. Therefore,
we can only retain in the second sum of equation~\eqref{eq arctan} 
the terms for which $2n \equiv -1 \pmod p$, \emph{i.e.} 
$2n = p - 1 + 2\ell p$ with $\ell \in \N$. We thus get:
\begin{align*}
\arctan(x^p) - p \cdot \arctan(x) 
& \equiv \sum_{n=0}^\infty \frac{(-1)^n}{2n+1} x^{(2n+1)p} - 
  \sum_{\ell=0}^\infty \frac{(-1)^{\ell - \frac{p-1}2}}{2\ell+1} x^{(2\ell+1)p} \\
& = \sum_{n=0}^\infty \frac{(-1)^n}{2n+1} \cdot \left(1 - (-1)^{\frac{p-1} 2}\right) \cdot x^{(2n+1)p}
  \pmod{p \Z_{(p)}[[x]]},
\end{align*}
hence $\arctan(x^p) - p \cdot \arctan(x)$ is divisible
by $p$ when $p \equiv 1 \mod 4$ and is not otherwise.
In conclusion,
the denominators of the $c_n$'s are all coprime 
with~$p$ (that is, $\exp(\arctan(x))$ can be reduced modulo $p$)
if and only if $p \equiv 1 \mod 4$.
\end{ex}

\begin{rem} Note that the sequence $(T_n)_{n \geq 0}$ defined by $T_n = n!
\cdot c_n$, satisfies the linear recurrence $T_{n+2} = T_{n+1} - n (n + 1) T_n$
with $T_0=T_1=1$, hence its terms are all integer numbers. Kelinsky proved in
\cite[Thm.~3]{Kelisky59} that for any prime $p\neq 2$, the term $T_p$ is
congruent to $0$ modulo~$p$ if $p \equiv 1 \mod 4$ (and to $2$ if $p \equiv 3
\mod 4$). The computation in \cref{ex exp-arctan3} 
provides a new proof of this statement: 
if $p \equiv 1 \mod 4$, then $T_n$ is congruent to $0$ 
modulo $p$ for all $n\geq p$, in other terms the generating function 
$\sum_{n\geq 0} T_n x^n$ is a \emph{polynomial} modulo~$p$.
\end{rem}

\medskip In the same orbit, we mention two other theorems which are not directly
related to our discussion but highlights other intricate relations
between algebraicity and integrality.

\begin{theo}[Conjectured by Ogus~\cite{Ogus82}, proved by André~\cite{Andre89}]
Let $f(x)\in\mathbb{Z}[[x]]$ such that $f'(x)$ is 
algebraic over $\Q(x)$.
Then $f(x)$ is  algebraic over $\Q(x)$.
\end{theo}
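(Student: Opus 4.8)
The plan is to reduce the statement to a case of Grothendieck's conjecture that is already a theorem --- namely André's result \cite{Andre04} for operators whose differential Galois group has solvable neutral component (Katz's theorem \cite{Katz72} for Picard--Fuchs operators would also do) --- after showing that the hypothesis ``$f'$ algebraic'' forces precisely such an operator, and then to use the integrality $f\in\Z[[x]]$ to verify that the relevant $p$-curvatures vanish.

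First I would carry out the easy reductions. Since $f'$ is algebraic over $\Q(x)$ it is D-finite (Abel), hence so is $f$; let $\op L\in\Q(x)\langle\partial_x\rangle$ be the minimal monic operator annihilating $f$, of order $n$, and $\op N$ the minimal monic operator annihilating $g:=f'$. As $\Qbar(x)(g)$ is a finite separable extension of $\Qbar(x)$, the associated connection becomes trivial over $\overline{\Q(x)}$, so $\op N$ has a full basis of algebraic solutions and finite differential Galois group. Write $M$ for the differential module of $\op L$; the submodule $N\subseteq M$ generated by $g=\partial_x f$ is a quotient of the module of $\op N$, hence has finite monodromy, while $M/N$ is generated by the class of $f$ and annihilated by $\partial_x$, so it is the trivial module of rank $0$ or $1$. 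If this rank is $0$ then $M=N$ has finite monodromy, $f$ is one of its (algebraic) solutions, and we are done; so I may assume $M/N$ is trivial of rank $1$. Then the neutral component of the differential Galois group of $\op L$ acts trivially on $N$ and on $M/N$, hence on $\operatorname{gr} M$, so it is unipotent --- in particular solvable. By the theorem recalled above, it therefore suffices to prove that the $p$-curvature of $\op L_p$ vanishes for all but finitely many primes $p$.

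To do this I would combine two facts, valid for all $p$ outside a finite bad set --- so that $\op L$, $\op N$ and the filtration $N\subseteq M$ reduce properly modulo $p$ and $f$ reduces to a nonzero $f_p\in\F_p[[x]]$ generating $M_p$; this is where $f\in\Z[[x]]$ is used. First, $\op N$ has a full basis of algebraic, hence (by \cref{prop:eisenstein}) globally bounded, solutions; applying \cref{theo:Andre} at an ordinary point shows that the $p$-curvature of $\op N_p$, and therefore that of the sub-$\nabla$-module $N_p$ of $M_p$, vanishes; consequently the $p$-curvature $\psi$ of $\op L_p$ vanishes on $N_p$ and, $M_p/N_p$ being trivial, takes its values in $N_p$. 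Second, the vector $\vec f_p=(f_p,f_p',\dots,f_p^{(n-1)})$ is a solution of the companion system of $\op L_p$, whence $\psi(\vec f_p)=\Delta^p(\vec f_p)=0$. Since $f_p$ generates $M_p$, the vector $\vec f_p$ is not contained in the proper $\nabla$-submodule $N_p$, so $N_p$ and $\vec f_p$ together span $M_p$ over $\F_p((x))$; being $\F_p(x)$-linear and vanishing on both, $\psi$ is zero. Hence almost all $p$-curvatures of $\op L_p$ vanish, $\op L$ has a full basis of algebraic solutions, and $f$ --- being a solution of $\op L$ --- is algebraic over $\Q(x)$, as desired.

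I expect the real content to be the structural step of the second paragraph: recognising that ``$f'$ algebraic'' makes the minimal operator of $f$ an extension of the trivial connection by a connection with finite monodromy, hence an operator with unipotent (solvable) neutral differential Galois group, which is exactly what brings the hard, proven cases of Grothendieck's conjecture into play. The final $p$-curvature computation, although it is the arithmetic heart of the argument, is then short, and uses $f\in\Z[[x]]$ only to produce the mod-$p$ solution $\vec f_p$. (In particular the positivity of the radius of convergence of $f$, which does hold since $f'$ is algebraic, plays no role.)
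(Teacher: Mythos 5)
The paper itself gives no proof of this statement (it is quoted from André's book), so your proposal must stand on its own. Its structural part is sound and is in the right spirit: in the nontrivial case one indeed has $\op L=\op N\cdot\partial_x$ with $\op N$ the minimal operator of $f'$ (which has a full basis of algebraic solutions), the differential Galois group of $\op L$ therefore has unipotent, hence solvable, neutral component, and it is legitimate to invoke André's solvable case of Grothendieck's conjecture; what remains is to prove that almost all $p$-curvatures of $\op L$ vanish, and that is where the whole difficulty of Ogus's conjecture sits.

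The gap is in that last step, and it comes from a sub/quotient mix-up. The factorization $\op L_p=\op N_p\cdot\partial_x$ endows the companion module $M_p=\F_p(x)^n$ --- the module in which your vector $\vec f_p$ lives --- with a rank-one trivial \emph{sub}module $L_0=\F_p(x)\,e_1$ (the constant solutions), whose \emph{quotient} $M_p/L_0$ is the companion module of $\op N_p$; the rank-$(n{-}1)$ object attached to $\op N$ is a submodule only in the dual picture $\F_p(x)\langle\partial_x\rangle/\F_p(x)\langle\partial_x\rangle\op L_p$, where $\vec f_p$ does not live. Consequently your two inputs only show that $\Delta^p$ kills $L_0$ together with the single extra vector $\vec f_p$ --- a $2$-dimensional space, not an $n$-dimensional one. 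Concretely, if $z_1,\dots,z_{n-1}\in\F_p(x)$ is the full basis of rational solutions of $\op N_p$, one computes $\Delta^p\bigl(0,z_j,z_j',\dots,z_j^{(n-2)}\bigr)=\bigl(-z_j^{(p-1)},0,\dots,0\bigr)$, so the $p$-curvature of $\op L_p$ vanishes if and only if \emph{every} $z_j$ satisfies $z_j^{(p-1)}=0$, i.e.\ admits a rational antiderivative. The hypothesis $f\in\Z[[x]]$ only yields $g_p^{(p-1)}=0$ for the one combination $g_p=f_p'=\sum_j c_j z_j$ with $c_j\in\F_p((x^p))$, i.e.\ a single linear relation $\sum_j c_j z_j^{(p-1)}=0$ among the $n-1$ quantities that must all vanish. (Your argument is complete when $n=2$, where sub and quotient coincide; and the operator $(1-x)\partial_x^2-\partial_x$ of \cref{ex:log} shows that the nilpotence $(\Delta^p)^2=0$ you do establish is strictly weaker than vanishing.) Passing from the integrality of the one solution $f$ to the vanishing of all the $z_j^{(p-1)}$ is precisely the hard content of the theorem --- it parallels the jump from one globally bounded solution to the full basis required in \cref{theo:Andre} --- and it needs André's $G$-function/transcendence machinery; it cannot be extracted by the linear algebra proposed.
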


\begin{theo}[Conjectured by Katz~\cite{Katz72}, proved by the Chudnovsky--Chudnovsky~\cite{ChCh85}]
Let $f(x)\in\mathbb{Z}[[x]]$ such that $f'(x)/f(x)$ is 
algebraic over $\Q(x)$.
Then $f(x)$ is algebraic over $\Q(x)$.
\end{theo}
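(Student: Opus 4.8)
The plan is to recognize $f$ as a local solution of a \emph{rank-one} linear differential equation over an algebraic curve, and then to feed into the Chudnovsky brothers' proof of Grothendieck's conjecture for such equations (\cite[Theorem 8.1]{ChCh85}) the arithmetic information extracted from $f \in \Z[[x]]$ by the rank-one case of Theorem~\ref{theo:Andre}.

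First I would normalize. Writing $f = x^m u$ with $m = \operatorname{ord}_0(f)$, one has $u \in \Z[[x]]$ with $u(0) \neq 0$, the logarithmic derivative $u'/u = f'/f - m/x$ is again algebraic over $\Q(x)$, and $f$ is algebraic over $\Q(x)$ if and only if $u$ is (as $x^m$ is a polynomial). So we may assume $f(0) \neq 0$; then $f$ is a unit in $\Q[[x]]$, so $g := f'/f$ lies in $\Q[[x]]$ and in particular has no pole at $x = 0$. Let $K = \Q(x)(g)$, a finite extension of $\Q(x)$, and let $\pi : X \to \P^1$ be the smooth projective curve over $\Q$ with function field $K$. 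Then $\omega := g\,dx$ is a meromorphic $1$-form on $X$, regular above $x = 0$, and for any point $P_0 \in \pi^{-1}(\{x = 0\})$ the series $f$, read through the inclusion $\Q[[x]] \hookrightarrow \widehat{\mathcal O}_{X,P_0}$, is a formal horizontal section of the rank-one connection $(\mathcal O_X, \nabla)$ with $\nabla = d - \omega$, because $df/f = \omega$.

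Next I would bring in the hypothesis $f \in \Z[[x]]$: it makes $f$ a $p$-integral formal solution of $(\mathcal O_X, \nabla)$ at $P_0$ for every prime $p$. By the rank-one case of Theorem~\ref{theo:Andre} --- which over a curve is the content of Dwork's criterion, relating the $p$-integrality of the formal solution $\exp(\int \omega)$ of $d - \omega$ to the vanishing of its $p$-curvature, in the spirit of the identity~\eqref{eq:SpAp} and of Remark~\ref{rem:pcurvpfd} already seen over $\P^1$ --- the reduction of $(\mathcal O_X, \nabla)$ modulo $p$ has vanishing $p$-curvature for all but finitely many primes $p$.

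Finally, by \cite[Theorem 8.1]{ChCh85}, a rank-one linear differential equation over an algebraic curve whose $p$-curvatures vanish for almost all $p$ has a full basis of algebraic solutions, hence becomes trivial over a finite cover of $X$; in particular the formal section $f$ is algebraic over $K = \Q(X)$, and therefore, $K$ being finite over $\Q(x)$, algebraic over $\Q(x)$, which is the assertion. The substantive difficulty is entirely contained in the cited theorem of Chudnovsky--Chudnovsky, whose own proof is arithmetic (through Hermite--Pad\'e approximation), so that the whole argument stays in the spirit of this article; the only point in the reduction that genuinely requires care is to make sure that $f \in \Z[[x]]$ really provides the $p$-adic hypothesis for the connection on the \emph{entire} curve $X$, and not merely at the single point $P_0$ --- which is exactly what the rank-one case of Theorem~\ref{theo:Andre} (Dwork's criterion) guarantees.
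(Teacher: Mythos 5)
The paper states this theorem without proof, only with the citations to Katz and to the Chudnovsky brothers, so there is no in-text argument to measure yours against; what you propose is, however, precisely the classical reduction (it goes back to Katz~\cite{Katz72}, who showed that his integrality conjecture follows from the rank-one case of Grothendieck's conjecture over curves — the case the Chudnovskys then proved): realize $f$ as a horizontal section of the rank-one connection $d-\omega$, $\omega=(f'/f)\,\mathrm{d}x$, on the curve $X$ with function field $K=\Q(x)(f'/f)$, use the integrality of $f$ to kill almost all $p$-curvatures, and invoke \cite[Theorem~8.1]{ChCh85}. Two points in the middle step deserve tightening. First, Theorem~\ref{theo:Andre} does not literally apply: it concerns operators over $\Q(x)$, whereas $y'=(f'/f)\,y$ lives over the finite extension $K$. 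What you actually need is the rank-one Cartier argument of Proposition~\ref{prop:cartier order 1} transplanted to $X$: after your normalization, and after dividing $f$ by the $p$-part of its content if necessary, $f \bmod p$ is a \emph{nonzero} formal horizontal section at the point $\bar P_0$ of $X_p$ above $x=0$ determined by the embedding $K\hookrightarrow\Q((x))$; since the $p$-curvature is linear over the structure sheaf and the bundle has rank one, it therefore vanishes on the formal disk at $\bar P_0$. Second, to promote this to vanishing on all of $X_p$ — which is what \cite{ChCh85} requires — you need $X_p$ to be irreducible for almost all $p$; this holds because the field of constants of $K$ is $\Q$, which is automatic here since $f'/f\in\Q[[x]]$ forces $K\subset\Q((x))$, whose only elements algebraic over $\Q$ are the rationals. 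With these two repairs the argument is complete: the Chudnovsky theorem produces a nonzero algebraic $h$ with $h'/h=f'/f$, whence $f/h$ is a constant and $f$ is algebraic, the entire arithmetic difficulty being carried, as you say, by the cited theorem.
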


\subsection{An analytic perspective on the $p$-curvature}

All that precedes indicates that the vanishing properties of the
$p$-curvatures tend to control the growth of the denominators of the
coefficients of the fundamental system of solutions~$S(x)$. Typically,
after equation~\eqref{eq:SpAp}, we have seen that $B_p(0) \equiv 0 \pmod
p$ is equivalent to the fact that $p$ does not divide the smallest
common denominator of the entries of $S_p$.

It is convenient to reformulate this class of properties in terms
of $p$-adic valuation and $p$-adic numbers.
We recall that the $p$-adic valuation of an integer~$n$, denoted
by~$v_p(n)$, is the greatest integer $v$ such that $p^v$
divides~$n$. We then define the $p$-adic valuation of a rational number
$x = \frac a b$ by setting $v_p(x) = v_p(a) - v_p(b)$.
Having a denominator coprime with~$p$ is then equivalent to having
nonnegative $p$-adic valuation.
If $M$ is a matrix over $\Q$, we define $v_p(M)$ as the minimum of
the $p$-valuations of its entries.

Recall that, for all nonnegative integer $i$, we have:
\[v_p(i!) = \sum_{n=1}^\infty \left\lfloor \frac i {p^n}\right\rfloor \leq
\frac i{p-1}\]
(where $\lfloor \,\cdot\,\rfloor$ is the floor function). Hence, if $S(x)$ is
defined over $\Z[\frac 1 D][[x]]$ and $p$ is a prime number which does
not divide $N$, we deduce from the very first definition of $S(x)$
(see equation~\eqref{eq:fundamentalsol2}) that
$v_p(S_i) \geq -v_p(i!) \geq \frac{-i}{p{-}1}$ for all $i$.
On the other hand, the property we have recalled above indicates
that $B_p(0) \equiv 0 \pmod p$ if and only if $v_p(S_p) \geq 0$.
It turns out actually that the vanishing of the $p$-curvature
implies a general lower bound on the $p$-adic valuation of the
$S_i$'s.

\begin{prop}
\label{prop:valp}
Let
\[\op L =
\partial_x^n + b_{n-1}(x) {\cdot}
\partial_x^{n-1} + \cdots + b_1(x) {\cdot}\partial_x + b_0(x)\]
be a differential operator over $\Q(x)$
and let $S$ be the matrix defined by equation~\eqref{eq:fundamentalsol2}.
If the reduction of $\op L$ modulo p is well-defined and if the 
$p$-curvature of $\op L \text{ mod } p$ vanishes, then for all
$i \geq 0$:
\begin{equation}
\label{eq:valpSi}
v_p(S_i) \geq -v_p\big( \lfloor i/p \rfloor !\big)
\geq \frac{-i}{p(p{-}1)}.
\end{equation}
\end{prop}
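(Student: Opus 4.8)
First I would dispose of the second inequality in~\eqref{eq:valpSi}, which is elementary: by Legendre's formula $v_p(m!)=\frac{m-s_p(m)}{p-1}\le\frac{m}{p-1}$ applied to $m=[i/p]\le i/p$, we get $v_p([i/p]!)\le\frac{i}{p(p-1)}$, hence $-v_p([i/p]!)\ge\frac{-i}{p(p-1)}$. So the real content is the bound $v_p(S_i)\ge -v_p([i/p]!)$.

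The plan for this is to extract from $\op L$ a first-order recurrence on the coefficients $S_i$ in which the index is shifted by $p$ and the coefficient matrix is divisible by $p$, and then to run a strong induction on $i$. Since $S(x)$ solves $Y'+B(x)Y=0$ (this follows from the recurrence~\eqref{rec for p curv} defining the $B_k$'s, by the very same computation that shows $S^\DP$ is a fundamental solution in~\S\ref{ssec:pcurvandsol}), it satisfies $\frac{d^p}{dx^p}S=C_p(x)\,S$, where the matrices $C_k(x)$ are defined by $C_0=I_n$ and $C_{k+1}=C_k'-C_kB$. Pick a subring $R=\Z[1/D]$ with $p\nmid D$ such that $B$ has entries in $R(x)_0$ (possible because $\op L$ reduces properly modulo $p$ and, as in~\S\ref{ssec:integrality}, has no pole at the origin). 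Then every $C_k$, being built from $B$ by differentiations and multiplications, has entries in $R(x)_0$; in particular $C_p$ has $p$-integral coefficients and reduces modulo $p$ to the matrix $\overline{C}_p$ over $\F_p(x)$ defined by the same recurrence from $\overline{\op L}=\op L\bmod p$. Now $\overline{C}_p$ annihilates every rational solution $Y\in\F_p(x)^n$ of $Y'+\overline{B}Y=0$: indeed $Y^{(p)}=\overline{C}_p\,Y$, while the $p$-th derivative vanishes identically on $\F_p(x)$ (as $\F_p(x)=\bigoplus_{j=0}^{p-1}\F_p(x^p)\,x^j$ and $\frac{d^p}{dx^p}$ is $\F_p(x^p)$-linear and kills each $x^j$ with $j<p$). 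By hypothesis the $p$-curvature of $\overline{\op L}$ vanishes, so by Cartier's lemma (Theorem~\ref{prop:cartier lemma}) $\overline{\op L}$ has a full basis of rational solutions, which by the wronskian lemma spans $\F_p(x)^n$ over $\F_p(x)$; hence $\overline{C}_p=0$, i.e.\ $C_p(x)=p\,\widetilde{C}_p(x)$ with $\widetilde{C}_p\in M_n(R(x)_0)$.

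Expanding $S(x)=\sum_{i\ge0}S_ix^i$ and $\widetilde{C}_p(x)=\sum_{m\ge0}\widetilde{C}_{p,m}x^m$ (with $v_p(\widetilde{C}_{p,m})\ge0$), the identity $\frac{d^p}{dx^p}S=p\,\widetilde{C}_p\,S$ becomes, after taking the coefficient of $x^\ell$,
\[
\frac{(\ell+p)!}{\ell!}\,S_{\ell+p}\;=\;p\sum_{j=0}^{\ell}\widetilde{C}_{p,\ell-j}\,S_j \qquad(\ell\ge0).
\]
Now I would prove $v_p(S_i)\ge -v_p([i/p]!)$ by strong induction on $i$. For $i<p$ this is just $v_p(S_i)\ge0$, which holds because $S$ has entries in $R[[x]]^\DP$, so $i!\,S_i\in M_n(R)$ and $i!$ is a $p$-unit. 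For $i=\ell+p\ge p$, taking $p$-adic valuations in the displayed identity, using $v_p\big((\ell+p)!/\ell!\big)=1+v_p([\ell/p]+1)$ (exactly one of the $p$ consecutive integers $\ell+1,\dots,\ell+p$ is a multiple of $p$, and the Kummer carries contribute $v_p([\ell/p]+1)$) and the inductive hypothesis $\min_{0\le j\le\ell}v_p(S_j)\ge -v_p([\ell/p]!)$, one gets
\[
v_p(S_{\ell+p})\;\ge\;1-\big(1+v_p([\ell/p]+1)\big)-v_p([\ell/p]!)\;=\;-v_p\big(([\ell/p]+1)!\big)\;=\;-v_p\big([(\ell+p)/p]!\big),
\]
which is exactly the bound for $i=\ell+p$. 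This closes the induction, and the two displays together give~\eqref{eq:valpSi}.

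The one genuinely delicate step is the vanishing $\overline{C}_p=0$: that is where the hypothesis on the $p$-curvature is used, through Cartier's lemma and the wronskian lemma. Everything else is the bookkeeping needed to legitimize the reductions modulo $p$ (choice of $R$, absence of poles at $0$, so that $S\in M_n(R[[x]]^\DP)$ and $C_p\in M_n(R(x)_0)$) together with a routine manipulation of $p$-adic valuations of factorials; I expect the write-up to be dominated by that bookkeeping rather than by any serious difficulty.
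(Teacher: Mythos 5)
Your proof is correct. The paper in fact states Proposition~\ref{prop:valp} without proof, so there is no official argument to compare against line by line; but your route is precisely the one the surrounding material is set up for. The identity $\frac{d^p S}{dx^p}=C_p(x)\,S$ is the characteristic-zero counterpart of Lemma~\ref{lem:dpSDP} (one has $C_p=-B_p$ up to the indexing convention in the recurrence~\eqref{rec for p curv}), the vanishing of $C_p$ modulo $p$ is exactly where the hypothesis enters — and your direct argument that $\overline{C}_p$ annihilates a spanning set of rational solutions, via Cartier's lemma (Theorem~\ref{prop:cartier lemma}) and the wronskian lemma, is clean and sidesteps having to match $C_p$ with $\pm B_p$ — and the resulting $p$-step recurrence on the Taylor coefficients together with $v_p\bigl((\ell+p)!/\ell!\bigr)=1+v_p\bigl(\lfloor \ell/p\rfloor+1\bigr)$ closes the strong induction. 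The only point to make explicit in a final write-up, since it is only implicit in the statement and in \S\ref{ssec:integrality}, is the standing assumption that the entries of $B(x)$ have no pole at $0$ and that their (monic) denominators have constant term that is a unit in $R=\Z[1/D]$ with $p\nmid D$, so that $S\in M_n\bigl(R[[x]]^\DP\bigr)$ and $\widetilde{C}_p\in M_n\bigl(R(x)_0\bigr)$ really do have $p$-integral Taylor coefficients; you flag this bookkeeping correctly.
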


Proposition~\ref{prop:valp} can be further rephrased in more
analytic terms using $p$-adic numbers. We recall briefly that
the field of $p$-adic numbers is the completion of $\Q$ for the
$p$-adic norm $\Vert \cdot \Vert_p$ defined by
$\Vert x \Vert_p = p^{-v_p(x)}$.
Set $\omega = p^{-1/(p-1)}$. Without any assumption on the
$p$-curvature, we have seen that $v_p(S_i) \geq \frac{-i}
{p-1}$, that is $\Vert S_i \Vert_p \leq \omega^{-i}$. This upper
bound indicates that the ($p$-adic) radius of convergence of the
series $S(x) = \sum_{i=0}^\infty S_i x^i$ is at least $\omega$.
On the contrary, when the $p$-curvature vanishes,
Proposition~\ref{prop:valp} tells us that $\Vert S_i \Vert_p \leq
\omega^{-i/p}$ for all $i$. Hence, the radius of convergence of $S$
is now at least $\omega^{1/p} > \omega$.
The $p$-curvature then measures some analytic properties of the
solutions of our differential system in the $p$-adic world.
A classical result in the theory of $p$-adic differential
equations~\cite[Theorem 10.4.2]{Kedlaya10},
refining the 
\emph{Frobenius antecedent theorem} of Christol and Dwork~\cite[Thm.~5.4]{ChDw94}
(see also~\cite[Theorem 6.15]{Kedlaya05}), asserts that when the
radius of convergence of a fundamental system of solutions is
strictly greater than $\omega$, the corresponding differential system
$Y'(x) + B(x) Y(x) = 0$ is equivalent, up to a base change, to a system 
of the form $Y'(x^p) + C(x^p) Y(x^p) = 0$ where the entries of $C(x)$
are $p$-adic analytic functions converging on the closed unit disk.
The differential system:
\[(\Sigma_1) : Y' + C(x)Y = 0\]
is called a \emph{Frobenius antecedent} of $(\Sigma)$.
The aforementioned convergence conditions allow for reducing
$(\Sigma_1)$ modulo $p$, thus obtaining a new differential
system on $\F_p(x)$. The latter has a well-defined $p$-curvature
and if this second $p$-curvature persists to vanish, one
eventually deduces that the radius of convergence of $S$
is at least $\omega^{1/p^2}$. When this occurs, one can continue this 
process
and find a second Frobenius antecedent $(\Sigma_2)$ of $(\Sigma)$.
If its $p$-curvature vanishes, the radius of convergence of $S$ will
be at least $\omega^{1/p^3}$ and so on and so forth.

In the perspective of the Grothendieck conjecture, we would like
to let $p$ vary and understand how the aforementioned convergence
properties interact.
A promising object, which looks capable to reflect these interactions 
is the \emph{Berkovich line over $\Z$}, which was anticipated by
Berkovich himself in~\cite{Berkovich90} and then developed by
Poineau~\cite{Poineau10}.
By definition, it is the space $\calM(\Z[x])$ consisting of
all \emph{bounded multiplicative semi-norms} $\Vert \cdot \Vert :
\Z[x] \to \R$. A semi-norm is a norm except that we authorize
nonzero elements to have norm zero. It is said multiplicative
if $\Vert f g \Vert = \Vert f \Vert \cdot \Vert g \Vert$ for
all $f, g \in \Z[x]$ and bounded when
\[\Vert a_0 + a_1 x + \cdots + a_n x^n \Vert \leq
\max\big(|a_0|, |a_1|, \ldots, |a_n|\big),\]
where $|a_i|$ denotes the usual absolute value of $a_i$.
Of course, the Berkovich line $\calM(\Z[x])$ is not only a set
but is endowed with a rich additional geometrical structure: a
topology, a structural sheaf, \emph{etc.}
Besides, after Poineau's work, we have at our disposal a whole
panel of powerful tools (inspired by modern algebraic geometry)
to work with it.

Describing entirely the space $\calM(\Z[x])$ is not obvious,
but it is not difficult to exhibit elements in it.
Take $K = \R$ or $\Q_p$ (the field of $p$-adic numbers) for
some prime number~$p$ and write $\Vert \cdot \Vert_K$ for the
standard absolute value of $K$. Pick in addition an element
$a \in K$ of norm at most $1$ and a nonnegative real number
$r$. Polynomials in $\Z[x]$ then define (real or $p$-adic)
analytic functions on the closed ball $B(a,r)$ of center $a$
and radius $r$. For $f \in \Z[x]$, we can then consider the
sup norm on this domain:
\[\Vert f \Vert_{a,r} = \sup_{x \in B(a,r)} \Vert f(x)\Vert_K.\]
One checks that it is an element of $\calM(\Z[x])$.
Moreover, at least in the $p$-adic case, the completion of $\Z[x]$ 
with respect
to this norm is the ring of $p$-adic analytic functions on
$B(a,r)$; we shall denote it by $\calA_{a,r}$ in what follows.
Another nice observation is that the notion of ``ball of center
$a$ and radius $r$'' has a well-defined meaning in the Berkovich
geometry. Indeed, let $\calM(\calA_{a,r})$ be the Berkovich space
associated to the ring $\calA_{a,r}$, \emph{i.e.} the set of
bounded multiplicative semi-norms on $\calA_{a,r}$. Restricting
a semi-norm from $\calA_{a,r}$ to $\Z[x]$ leaves us with an
injective map:
\[\calM(\calA_{a,r}) \hookrightarrow \calM(\Z[x])\]
whose image, denoted by $U_{a,r}$, is an open subset (for the
Berkovich topology) in $\calM(\Z[x])$. In addition, we observe
that any analytic function $f$ on $B(a,r)$, that is any element
$f \in \calA_{a,r}$, induces a function on $U_{a,r}$:
to each semi-norm $\Vert \cdot \Vert \in \calM(\calA_{a,r})$,
we associate $\Vert f \Vert$. For this reason, it is natural
to think at $U_{a,r}$ as the Berkovich incarnation of the ball
of center $a$ and radius~$r$.

Coming back to our topic, let us consider a differential system
$(\Sigma) : Y' + A(x) Y = 0$ over $\Z[x]$. By what we have seen
previously, for almost all prime numbers~$p$ and all $a \in \Z_p$,
the system $(\Sigma)$ admits a full basis of solutions in
$\calA_{a,\omega}$ where we recall that we have set $\omega =
p^{-1/(p-1)}$.
In the Berkovich language, these functions give rise to new
functions defined on $U_{a,\omega}$. Putting them together,
we conclude that $(\Sigma)$ always admits a basis of solutions
on a certain open subspace $U_0 \subset \calM(\Z[x])$.
Now, the assumption that almost all the $p$-curvatures vanish
shows that those solutions extend automatically to a larger
subspace $U_1 \subset \calM(\Z[x])$. On the other hand, in the
Berkovich language, proving the Grothendieck conjecture 
amounts to showing
that there exist a full basis of solution on an étale covering of
$\calM(\Z[x])$. Of course, these remarks do not give any proof of
the Grothendieck conjecture because $U_1$ itself is certainly
\emph{not} an étale covering of $\calM(\Z[x])$.
However, we think that this point of view has the potential
to lead to new interesting developments towards the Grothendieck
conjecture in the future.

\bigskip 
\thanks{{\bf Acknowledgements}. We are very grateful to Herwig Hauser for the initial idea of this survey, for his constant support along the various phases of the project, and for the marvelous workshops he organized in Lisbon these last years, from which the three authors benefited a lot (supported by the Austrian Science Fund FWF, project P-34765).
Our warm thanks go to Florian Fürnsinn, Mark van Hoeij and Sergey Yurkevich for their careful reading and helpful comments.
This work has been partially supported by the French grants CLap-CLap (ANR-18-CE40-0026) and DeRerumNatura (ANR-19-CE40-0018), and by the French--Austrian project EAGLES (ANR-22-CE91-0007 \& FWF I6130-N).}

\end{document}